%% file: manuscript.tex
\documentclass[preprint,12pt, a4paper]{elsarticle}

\usepackage{subfiles}
\journal{Chemical Engineering Science}

\usepackage{graphicx}
\extrafloats{200}

\usepackage{amsmath}
\usepackage{amsthm}
\usepackage{amsfonts}
\usepackage{amssymb}
\usepackage{mathtools}
\newcommand{\defeq}{\vcentcolon=}

\DeclareSymbolFontAlphabet{\amsmathbb}{AMSb}%

\newtheorem{theorem}{Theorem}
\newtheorem{lemma}{Lemma}
\newtheorem{definition}{Definition}
\newtheorem{proposition}{Proposition}
\newtheorem{corollary}{Corollary}

\newtheorem{remark}{Remark}

\usepackage[plainpages=false, pdfpagelabels]{hyperref}
\usepackage{cleveref}

\usepackage{xcolor}
\usepackage[normalem]{ulem}

\makeatletter
\newcommand*{\rom}[1]{\expandafter\@slowromancap\romannumeral #1@}
\makeatother

\usepackage{algorithm}
\usepackage{algpseudocode}
\usepackage{pifont}

\begin{document}

\begin{frontmatter}

\title{A geometric reformulation of the bilevel parameter optimization problem to a single level non-linear programming problem with applications to phase equilibria}

\author[bre,iwt]{Stefan C. Endres\corref{cor1}}
\ead{s.endres@iwt.uni-bremen.de}
\cortext[cor1]{Corresponding author.}
\author[bre,iwt]{Lutz M\"adler}


\affiliation[bre]{organization={University of Bremen, Faculty of Production Engineering}, 
  city={Bremen},
  postcode={28359},
  country={Germany}}

\affiliation[iwt]{organization={Leibniz Institute for Materials Engineering - IWT}, 
  city={Bremen},
  postcode={28359},
  country={Germany}}

\begin{abstract}
\input{sections/abstract}
\end{abstract}

\begin{keyword}
Bilevel optimisation \sep computational homology \sep phase
equilibrium \sep parameter optimisation \sep SHGO
\end{keyword}

\end{frontmatter}

\input{sections/nomenclature}

\section{Introduction and Background}
\input{sections/introduction_background}

\section{Methods}\label{sec:methods}
\input{sections/methods}

\section{Results}
\input{sections/results}
\input{sections/results_lle}
\input{sections/results_sour_gas}
\input{sections/results_ccs}
\input{sections/results_refrigerant}


\section{Conclusions}
\input{sections/conclusions}

\section*{Supplementary material}
The complete per-data-point Gibbs free energy and dual-manifold
archives that certify the stability record of every fitted case are
provided as three separate supplementary documents:
Supplementary Material~S1 (per-temperature plots for the four binary
LLE benchmark cases), Supplementary Material~S2 (per-tie-line plots
for the sour-gas ternary case), and Supplementary Material~S3
(per-tie-line plots for the CCS and refrigerant validation cases).
Representative example plots from each archive are reproduced in the
corresponding results sections of the main text.

\bibliographystyle{elsarticle-num}
\bibliography{references}

\appendix
\section{The dual manifold: formulation and global-consistency proofs}\label{appendix:proofs}
\input{sections/appendix_esurf}
\input{sections/appendix_phi_zero_proof}

\section{The two-stage geometric bilevel solver: algorithmic specification and convergence}\label{appendix:solver}
\input{sections/appendix_algorithms}
\input{sections/appendix_convergence_proof}

\section{Validation of the CEOS inner solver against Glass et al.}\label{appendix:ceos-validation-sec}
\input{sections/appendix_ceos_validation}


\end{document}


\begin{frontmatter}
\title{Supplementary Material S1: Per-temperature Gibbs and dual-manifold
plots for the four binary LLE benchmark cases\\[1ex]
\normalsize{Supplement to: A geometric reformulation of the bilevel
parameter optimization problem to a single level non-linear programming
problem with applications to phase equilibria}}
\end{frontmatter}

\section{Per-data-point Gibbs and dual-manifold plots for the LLE benchmark cases}
\label{supp:lle-per-point}
\input{sections/supp_lle_per_point}

\bibliographystyle{elsarticle-num}
\bibliography{references}

%% file: sections/abstract.tex
Many problems in science and engineering are modelled with an underlying energy potential surface from which more complex models and phenomena are derived. Phase equilibrium problems in particular are central to chemical engineering, underpinning tasks ranging from separation process design to the development of thermodynamic models. A particularly challenging problem from the perspective of computational engineering is the generation of phase envelopes: the inverse problem of rigorously fitting thermodynamic models to real world data with well behaved predictions requires finding the solution of a computationally expensive bilevel optimization. We present a geometric reformulation of the parameter bilevel optimization problem for fitting thermodynamic models to equilibrium data. The reformulation allows for the bilevel problem to be restated as a single level problem that is significantly easier to solve. The solution to the single level problem is proven to be the globally optimal solution of the bilevel problem when specialized global optimization solvers are used. In addition, the method retains the well behaved resultant model guaranteed by its constraints such as enforcing the correct number of phase splits by excluding spurious phases and ensuring the stability and correct number of phases in regions of instability. This allows the practitioner to reliably and efficiently fit mathematically complex thermodynamic models to data and potentially allows for the highly accurate and rigorous modelling and simulation of problems in computational thermodynamics that were previously intractable. Finally an algorithm is presented that is proven to converge for any black box thermodynamic model, therefore only an expression of the Gibbs free energy is required, the model's derivatives are not required and convergence is guaranteed for the broadest class of non-smooth, non-continuous models.

%% file: sections/nomenclature.tex

\section*{Nomenclature}
\addcontentsline{toc}{section}{Nomenclature}

\subsection*{Sets and indices}

\noindent
\begin{tabular}{@{}p{0.18\linewidth}p{0.78\linewidth}@{}}
$n$              & Number of mole-fraction coordinates of the energy surface ($c+1$). \\
$c = n-1$        & Number of independent chemical components. \\
$\Delta^{n}$     & Composition simplex,
                   $\{\mathbf{x}\in\mathbb{R}^{n}_{\geq 0} : \sum_{j} x_{j} = 1\}$. \\
$i = 1,\dots,k$  & Index of an experimental tie-line; $k$ is the number of
                   tie-lines at fixed $(P,T)$. \\
$j$              & Index over chemical components, $j = 1,\dots,n$. \\
$s$              & Index labelling distinct regions of instability that share
                   the same $(P,T)$. \\
$\alpha,\beta$   & Phase superscripts on coexisting compositions
                   $\mathbf{x}^{\alpha,i}, \mathbf{x}^{\beta,i}$;
                   synonymous with the binary-equilibrium labels
                   ``Phase~I'' and ``Phase~II''. \\
$\mathcal{D}$    & Set of experimental temperatures in an isobaric dataset. \\
$\mathcal{X}_{D}^{-}(\mathbf{p})$ & Set of \emph{defective minima} of the dual
                   manifold at parameter $\mathbf{p}$, equation~\eqref{eq:XDmin}. \\
$\mathcal{X}_{D}^{+}(\mathbf{p})$ & Set of \emph{defective maxima} of
                   $-\mathcal{M}$ at parameter $\mathbf{p}$. \\
\end{tabular}

\subsection*{Decision variables}

\noindent
\begin{tabular}{@{}p{0.18\linewidth}p{0.78\linewidth}@{}}
$\mathbf{x}\in\Delta^{n}$
                 & Mole-fraction (composition) vector. \\
$x_{j}$          & Mole fraction of component $j$. \\
$\mathbf{x}^{\alpha,i},\mathbf{x}^{\beta,i}$
                 & Equilibrium endpoint compositions of tie-line $i$. \\
$\mathbf{x}_{\mathrm{near}}$
                 & Nearest equilibrium endpoint to a defective minimum,
                   $\arg\min_{\mathbf{y}\in\{\mathbf{x}^{\alpha,i},\mathbf{x}^{\beta,i}\}}
                   \|\mathbf{y}-\mathbf{x}\|_{2}$. \\
$\mathbf{x}^{+}_{\mathrm{near}}$
                 & Nearest defective maximum in $\mathcal{X}_{D}^{+}$
                   measured in the joint composition--energy space
                   $(\mathbf{x},g(\mathbf{x},\mathbf{p}))$. \\
$\mathbf{p}\in P\subset\mathbb{R}^{m}$
                 & Vector of thermodynamic-model parameters
                   (the upper-level decision variable). \\
$\mathbf{p}^{\star}$
                 & Optimal parameter vector returned by the bilevel solver. \\
$\mathbf{p}_{\mathrm{fit}}$
                 & Synonym for $\mathbf{p}^{\star}$ (the bilevel-fitted
                   optimum) used in the Supplementary Material S1 figure
                   captions. \\
$\mathbf{d} = \mathbf{x}^{\beta,i}-\mathbf{x}^{\alpha,i}$
                 & Tie-line direction. \\
$\lambda \in \mathbb{R}$
                 & Affine parameter along a tie-line:
                   $\mathbf{x}(\lambda)=\mathbf{x}^{\alpha}+\lambda(\mathbf{x}^{\beta}-\mathbf{x}^{\alpha})$. \\
\end{tabular}

\subsection*{Thermodynamic state and physical parameters}

\noindent
\begin{tabular}{@{}p{0.18\linewidth}p{0.78\linewidth}@{}}
$T$              & Absolute temperature $[\mathrm{K}]$. \\
$P$              & Pressure $[\mathrm{Pa}]$. \\
$T_{\mathrm{ref}}$
                 & Reference temperature for the NRTL temperature-dependence
                   form (\,$T_{\mathrm{ref}} = 298.15~\mathrm{K}$ throughout). \\
$\alpha_{ij}$    & NRTL non-randomness parameter (binary). \\
$\tau_{ij}$      & NRTL binary-interaction energy parameter,
                   $\tau_{ij}=A_{ij}+B_{ij}(T_{\mathrm{ref}}/T-1)
                                 +C_{ij}(T/T_{\mathrm{ref}}-1)$. \\
$A_{ij},B_{ij},C_{ij}$
                 & Temperature-coefficient triplet of $\tau_{ij}$;
                   the six values $(A_{12},B_{12},C_{12},A_{21},B_{21},C_{21})$
                   form $\mathbf{p}$ for binary NRTL fits. \\
$\mu_{j}$        & Chemical potential of component $j$,
                   $\mu_{j}=\partial G/\partial x_{j}$. \\
$\varepsilon,\sigma$
                 & Pair-potential depth and characteristic distance of the
                   Lennard--Jones. \\
\end{tabular}

\subsection*{Energy surfaces, hyperplanes and the dual manifold}

\noindent
\begin{tabular}{@{}p{0.18\linewidth}p{0.78\linewidth}@{}}
$U(\mathbf{x},\mathbf{p})$
                 & Generic potential energy surface. \\
$G(\mathbf{x},\mathbf{p})$
                 & Total Gibbs free energy of the mixture at fixed $(P,T)$. \\
$g(\mathbf{x},\mathbf{p})$
                 & Reduced (dimensionless) Gibbs free energy,
                   $g = G/(RT)$; the universal solver argument carried
                   through both the outer parameter level and the inner
                   equilibrium level of the bilevel formulation. \\
$h_{i}(\mathbf{x})$
                 & Affine interpolant (chord / hyperplane) through the two
                   endpoints of tie-line $i$. \\
$h(\mathbf{x}) = \min_{i} h_{i}(\mathbf{x})$
                 & Composite supporting hyperplane over the simplex
                   (\,\autoref{def:h-composite-appendix},
                   Equation~\eqref{eq:hsum}). \\
$\mathcal{M}(\mathbf{x};\mathbf{p})$
                 & Dual manifold,
                   $\mathcal{M} \defeq g(\mathbf{x},\mathbf{p}) - h(\mathbf{x})$,
                   Equation~\eqref{eq:dual-manifold}. \\
$f_{\mathcal{M}}$
                 & Vector embedding $\Delta^{n}\!\to\Delta^{n+1}\subset\mathbb{R}^{n+1}$
                   of $\mathcal{M}$. \\
$[\,\cdot\,]_{-}$
                 & Negative part operator, $[r]_{-} \defeq \min(r,0)$,
                   Equation~\eqref{eq:negpart}. \\
$[\,\cdot\,]_{+}$
                 & Positive part operator, $[r]_{+} \defeq \max(r,0)$. \\
\end{tabular}

\subsection*{Objective functions and components}

\noindent
\begin{tabular}{@{}p{0.18\linewidth}p{0.78\linewidth}@{}}
$F(\mathbf{x},\mathbf{p})$
                 & Generic upper-level objective in the bilevel program
                   (Equation~\eqref{eq:bilevel_general}). \\
$J(\mathbf{p})$  & Single-tie-line line-trace objective
                   Equation~\eqref{eq:single-tieline-objective}. \\
$\Phi(\mathbf{p})$
                 & Single-level objective of the geometric reformulation,
                   Equation~\eqref{eq:phi-total}. \\
$\Phi_{\mathrm{tangent}}$
                 & Tangent-plane sub-sampling residual at the four
                   bracketing samples per tie-line. \\
$\Phi_{\mathrm{plane}}$
                 & Plane-violation sum over $\mathcal{X}_{D}^{-}$. \\
$\Phi_{\mathrm{topo}}$
                 & Topological pull from each defective minimum to its
                   nearest equilibrium endpoint. \\
$\Phi_{\mathrm{local}}$
                 & Local-topology amplitude (joint composition--energy
                   distance from each defective minimum to its paired
                   maximum). \\
$\Phi_{\mathrm{phase}}$
                 & Phase-assignment partition of $\arg\min\mathcal{M}$ into
                   defective minima vs.\ equilibrium endpoints. \\
$\epsilon$       & Bracketing offset of the four directional samples in
                   $\Phi_{\mathrm{tangent}}$; finite-difference scale
                   ($1/\epsilon$) used to promote the residual to a
                   directional derivative. \\
$\eta$           & Adaptive-$\epsilon$ scale factor,
                   $\epsilon_{i} = \eta \,\|\mathbf{x}^{\beta,i}-\mathbf{x}^{\alpha,i}\|$. \\
\end{tabular}

\subsection*{Solver and error metrics}

\noindent
\begin{tabular}{@{}p{0.18\linewidth}p{0.78\linewidth}@{}}
$N_{\mathrm{tan}}$
                 & SHGO sampling density used in the tangent-plane stage
                   (default 1000). \\
$N_{\mathrm{def}}$
                 & SHGO sampling density used in the inner defect-detection
                   stage (default 20). \\
$\epsilon_{a}$
                 & Absolute least-squares residual,
                   $\sum_{i}\|\mathbf{x}^{\mathrm{exp}}_{i}-\mathbf{x}^{\mathrm{pred}}_{i}\|^{2}$;
                   matches the ``Error'' column of
                   \cite{Mitsos2009LLE}~Table~1. \\
$\epsilon_{r,\mathrm{I}},\epsilon_{r,\mathrm{II}}$
                 & Per-phase relative errors on Phase~I ($\alpha$-rich) and
                   Phase~II ($\beta$-rich) compositions; match the
                   ``Rel-error$_{1}$'' / ``Rel-error$_{2}$'' columns of
                   \cite{Mitsos2009LLE}~Table~1. \\
\end{tabular}

\subsection*{Acronyms}

\noindent
\begin{tabular}{@{}p{0.18\linewidth}p{0.78\linewidth}@{}}
LLE              & Liquid--liquid equilibrium. \\
VLE              & Vapour--liquid equilibrium. \\
VLLE             & Vapour--liquid--liquid equilibrium. \\
EOS / CEOS       & Equation of state / cubic equation of state. \\
NRTL             & Non-Random Two-Liquid activity-coefficient model. \\
UNIQUAC          & Universal Quasi-Chemical activity-coefficient model. \\
UNIFAC           & UNIQUAC Functional-group Activity Coefficients
                   (group-contribution extension). \\
SRK, PR, PR78    & Soave--Redlich--Kwong, Peng--Robinson (1976) and
                   Peng--Robinson (1978) cubic EOS. \\
TWU              & Twu (1991) volume-translated alpha-function EOS. \\
PC-SAFT          & Perturbed-Chain Statistical Associating Fluid Theory. \\
DWPM             & Dual-Weighted Phase-Minimisation equilibrium solver. \\
TPD / TPDF       & Tangent-plane distance / tangent-plane distance function
                   \cite{Michelsen1982}. \\
KKT              & Karush--Kuhn--Tucker (first-order optimality conditions). \\
NLP              & Nonlinear program. \\
MINLP            & Mixed-integer nonlinear program. \\
SIP              & Semi-infinite program. \\
SHGO             & Simplicial-Homology Global Optimisation
                   \cite{Endres2018}. \\
PDIFF            & Piecewise-differentiable function class. \\
\end{tabular}

\bigskip

%% file: sections/introduction_background.tex

Many problems in science and engineering are modelled with an underlying energy potential surface from which more complex models and phenomena are derived. Phase equilibrium problems in particular are central to chemical engineering, underpinning tasks ranging from the design of separation processes to the development of new thermodynamic models \cite{kontogeorgis2009thermodynamic, SmithVanNessAbbott2018}. The accurate prediction of phase behaviour for both familiar and industrially relevant mixtures relies on thermodynamic models such as NRTL, UNIQUAC, Wilson, cubic equations of state, and PC-SAFT, whose parameters must be fitted to experimental data in a manner that is consistent with the laws of thermodynamics governing phase coexistence. Determining these parameters is, however, an instance of a computationally expensive \emph{bilevel} optimization problem, in which an outer parameter-fitting objective is constrained at every iteration by an inner global Gibbs free energy minimization \cite{Mitsos2009LLE, Bollas2009VLE}. The reliable, efficient solution of this bilevel problem, together with a geometric reformulation that recasts it as a single-level non-linear programming problem with provable global optimality guarantees, are the central concerns of this work.

\subsection{Mixture Thermodynamics and the Inverse Problem}

In thermodynamics, the equilibrium state of a system at given conditions is determined by minimizing an appropriate energy (e.g. Gibbs free energy at constant $T,P$), subject to constraints like material balances \cite{Zhang2011, Mitsos2007}. Conversely, in \emph{inverse} problems one seeks to determine model parameters or design variables such that the resulting energy surface matches observed behavior or yields a desired outcome. The inverse problem therefore inherits the bilevel structure introduced above: the outer level adjusts model parameters or design variables, while the inner level is the energy-minimizing equilibrium state for each candidate parameter set. These problems are challenging because the inner energy minimization is typically nonconvex and may have multiple local minima, so a naive approach can converge to suboptimal or physically incorrect solutions \cite{Michelsen1982, McDonald1995, Bollas2009VLE}. In particular, phase equilibrium computations, stability analysis, and parameter estimation in thermodynamic models are all recognized as global optimization problems due to highly nonlinear, multimodal energy surfaces \cite{Bollas2009VLE, Zhang2011}.

\subsection{Use of Global Optimization in Phase Equilibria Thermodynamics}

The role of global optimization in phase equilibrium problems has been well established since Michelsen's seminal work, with the introduction of the tangent plane distance (TPD) stability criterion \cite{Michelsen1982}, which cast phase stability as a global Gibbs energy minimization problem. He showed that a single-phase mixture is stable if and only if no other phase composition yields a lower Gibbs free energy: in practice, if the minimum TPD is non-negative; a negative TPD indicates the mixture will split into multiple phases. This insight revealed the nonconvexity of the free energy surface (multiple local minima corresponding to possible phase splits). Michelsen's criterion, though not a bilevel formulation itself, underpins later methods by formalizing phase equilibrium as a global optimization problem. However, due to the highly nonlinear nature of excess Gibbs free energy models, ensuring that equilibrium calculations do not converge to a metastable state is difficult. McDonald and Floudas \cite{McDonald1995} demonstrated that deterministic global optimization is necessary to reliably solve phase stability problems, as standard local optimization methods may lead to incorrect predictions by failing to detect alternative phase splits. Their work motivated later bilevel optimization frameworks that embedded phase equilibrium calculations as rigorous global minimization problems \cite{Bollas2009VLE, Mitsos2009LLE}.

In the context of Gibbs free energy models, phase stability is verified using the \textit{tangent plane criterion}. This criterion was originally developed by Gibbs~\cite{gibbs1878} and reformulated for modern computational use by Michelsen~\cite{Michelsen1982}. A composition $\mathbf{x}^{*}$ is globally stable if the Gibbs free energy surface $G(\mathbf{x})$ lies above its tangent plane at $\mathbf{x}^{*}$ for all $\mathbf{x} \in \Delta^{n}$:
\begin{equation}
G(\mathbf{x}) \geq G(\mathbf{x}^{*}) + \sum_{j} \mu_{j}(\mathbf{x}^{*})\,(x_{j} - x^{*}_{j}).
\end{equation}
Equivalently, this can be expressed via a \textit{thermodynamic inequality} using the chemical potentials:
\begin{equation}
\mu_{j}(\mathbf{x}^{*}) - \mu_{j}(\mathbf{x}) \leq 0 \quad \forall \mathbf{x} \in \Delta^{n}.
\end{equation}
This condition must hold for all trial compositions $\mathbf{x}$ to ensure that $\mathbf{x}^{*}$ corresponds to a true equilibrium, not just a local extremum, but one satisfying global thermodynamic consistency~\cite{gibbs1878, Michelsen1982}.

In inverse problems, this constraint defines a \textit{feasibility region} in parameter space. That is, for a given set of model parameters $\mathbf{p}$, we only admit solutions $\mathbf{x}_{e}(\mathbf{p})$ that satisfy the tangent plane criterion. This results in a \textit{nonconvex bilevel problem}, where global optimization is required even to verify feasibility.

This formulation is particularly useful in ensuring that the model does not overfit data by predicting spurious phase splits or unstable equilibria, a key challenge highlighted in phase behavior classification studies~\cite{Mitsos2009LLE}.

Robust global optimization techniques are essential to reliably solve such \textit{inverse free energy surface problems}. The outer loop adjusts the thermodynamic parameters $\mathbf{p}$ to match experiments, while the inner loop performs global Gibbs energy minimization and enforces stability via the tangent plane condition. This forms the computational backbone of the Mitsos--Barton dual extremum approach to phase equilibrium.

\subsection{Formal Mathematical Statements of the Problem} \label{sec:bilevel}
\subsubsection{Formalizing the Inverse Free Energy Surface Problem}
A general inverse or bilevel optimization problem in thermodynamics can be formulated as follows:
\begin{equation}
\min_{\mathbf{x}, \mathbf{p}} F(\mathbf{x}, \mathbf{p}),
\end{equation}
where $\mathbf{p}$ denotes the thermodynamic-model parameters (e.g., binary interaction parameters in excess Gibbs energy models), and $\mathbf{x}$ represents the system configuration, such as phase compositions. The objective function $F(\mathbf{x}, \mathbf{p})$ typically quantifies the discrepancy between experimental measurements and model predictions:
\begin{equation}
F(\mathbf{p}) = \left\|\mathbf{x}^{\mathrm{exp}}_{e} - \mathbf{x}_{e}(\mathbf{p})\right\|,
\end{equation}
where $\mathbf{x}^{\mathrm{exp}}_{e}$ is a measured equilibrium composition and $\mathbf{x}_{e}(\mathbf{p})$ is the equilibrium composition predicted by the model at parameter set $\mathbf{p}$.

To obtain $\mathbf{x}_{e}(\mathbf{p})$, one must solve a \textit{lower-level optimization problem} that corresponds to minimizing the total Gibbs free energy of the system:
\begin{equation} \label{eq:min Gibbs}
\min_{\mathbf{x}} G(\mathbf{x}, \mathbf{p}),
\end{equation}
subject to phase and mole balance constraints.

Alternatively, this can be reformulated as a \textit{tangent plane condition} that serves as a thermodynamic stability criterion:
\begin{equation}
\mu_j(\mathbf{x}^{*}) - \mu_j(\mathbf{x}) \leq 0 \quad \forall \mathbf{x} \in \Delta^{n}, \quad \forall j,
\end{equation}
where $\mu_j(\mathbf{x})$ is the chemical potential of component $j$ at composition $\mathbf{x}$, and $\mathbf{x}^{*}$ is a candidate equilibrium phase. This constraint ensures that no tangent plane constructed at $\mathbf{x}^{*}$ lies above the free energy surface, a necessary and sufficient condition for phase stability~\cite{gibbs1878, Michelsen1982}.

The full bilevel optimization problem can be written as:
\begin{equation} \label{eq:bilevel_general}
\begin{aligned}
\min _{\mathbf{x} \in \Delta^{n},\,\mathbf{p} \in P} & \quad F(\mathbf{x}, \mathbf{p}) \\
\text{subject to:} & \quad C^{\mathrm{out}}_{i}(\mathbf{x}, \mathbf{p}) \leq 0, \quad \forall i, \\
& \quad \mathbf{x} \in \arg \min _{\bar{\mathbf{x}} \in \Delta^{n}}\left\{ G(\bar{\mathbf{x}}, \mathbf{p}) : C^{\mathrm{in}}_{j}(\bar{\mathbf{x}}, \mathbf{p}) \leq 0,\ \forall j \right\}.
\end{aligned}
\end{equation}

Equation~\eqref{eq:bilevel_general}
is usually multimodal, and robust global optimization techniques are therefore essential for reliably solving such inverse energy-surface problems \cite{Esposito1998, Gau2002, Bollas2009VLE, Zhang2011}.

\subsubsection{Necessary and Sufficient Stability Criteria} \label{sec:neccessary_sufficient_stability}

It is essential to distinguish \emph{necessary} from \emph{sufficient} stability criteria, a distinction sharpened by Baker, Pierce \& Luks~\cite{Baker1982} 
in the context of cubic equations of state. The classical isopotential constraint $\mu_i^{\mathrm{I}} = \mu_i^{\mathrm{II}}$, together with material balance, is necessary for phase equilibrium but is satisfied by trivial single-phase solutions and by false multi-phase tangencies that lie above the global Gibbs envelope. The \emph{sufficient} criterion is the global non-negativity of the tangent plane distance function (TPDF) constructed at every candidate phase composition~\cite{Baker1982,Michelsen1982}. 
Glass, Djelassi \& Mitsos~\cite{Glass2018CEOS} 
recently emphasised that almost all standard regression formulations enforce only the necessary criterion, with the sufficient criterion checked (when it is checked at all) only \emph{a posteriori}.

For activity-coefficient ($\Delta G^E$) models the stability picture is purely \emph{compositional}: the only state variable on the lower level is the mole fraction $\mathbf{x}$. Cubic equations of state (CEOS), by contrast, introduce an additional stability dimension. The cubic in molar volume can admit up to three real roots, of which only the smallest (liquid-like) and largest (vapor-like) are physically admissible; the intermediate root is mechanically unstable and must be excluded by a \emph{mechanical}-stability test alongside the diffusive (compositional) test~\cite{Glass2018CEOS,Kamath2010}. 
Imposing the cubic itself as an equality constraint inside the bilevel program ties parameter regression to a non-trivial root-discrimination criterion, ranging from the implicit reformulation of Nagarajan, Cullick \& Griewank~\cite{NagarajanCullick1991} 
to the mixed-integer big-$M$ formulation of Kamath, Biegler \& Grossmann~\cite{Kamath2010}. 
Both formulations live below the parameter-regression upper level and must be solved to global optimality if the resulting parameter set is to be guaranteed thermodynamically consistent in subsequent process simulation.

\subsection{The Inverse Problem of Fitting Thermodynamic Models to Phase Equilibrium Data}
Accurate prediction of phase equilibria (vapor--liquid, liquid--liquid, etc.) is critical in chemical engineering for design and simulation of separation processes. The thermodynamic models (e.g. activity coefficient models such as NRTL, UNIQUAC, Wilson) contain parameters that must be fitted to experimental phase equilibrium data. This parameter estimation is a typical bilevel problem as described in the previous section~\ref{sec:bilevel}: the upper level seeks the best-fit parameters (typically by minimizing a least-squares discrepancy between predicted and measured phase compositions or other properties), and the lower level corresponds to the phase equilibrium calculation for each experimental condition \cite{Bollas2009VLE, Mitsos2009LLE}. In traditional approaches, the phase equilibrium computations are embedded as equations (e.g. enforcing equality of fugacities or chemical potentials in each phase) and the entire problem is solved as a single-level nonlinear program. However, standard local optimization methods often converge to \emph{locally} optimal parameters that yield qualitatively wrong predictions \cite{Bollas2009VLE}. Notably, a model fit to data could reproduce the measurements well yet predict spurious phase behavior outside the measured range. Examples of such pitfalls include predicting additional phases or phase splits that were not actually observed, or misclassifying a homogeneous azeotrope as heterogeneous \cite{Bollas2009VLE, Mitsos2007}. These errors arise when the parameter estimation converges to a solution that is only a local optimum of the model's Gibbs energy surface, not the true global minimum \cite{Bollas2009VLE, Zhang2011}.

Researchers have long been aware of the need to consider global phase stability in model fitting. The tangent plane criterion introduced above is effectively a lower-level optimization problem embedded in phase equilibrium calculations. Early work implemented tangent plane stability analysis using methods like interval Newton solvers \cite{Piro2016} and homotopy continuation to ensure that all possible incipient phases are detected. Likewise, direct Gibbs energy minimization for equilibrium (with multiple phases allowed) was tackled via global optimization algorithms as early as the 1990s. For example, Nichita \emph{et al.} (2002) formulated the phase equilibrium problem as a nonconvex minimization of total $G$ and applied deterministic global solvers to reliably find the global minimum \cite{Piro2016, Zhang2011}. These efforts addressed the \emph{inner} problem of phase equilibrium. However, integrating such global approaches into parameter estimation (the \emph{outer} problem) was nontrivial, and for many years parameter fitting was often done with local methods that did not guarantee thermodynamically consistent results \cite{Bollas2009VLE, Zhang2011}.

A breakthrough came with the work of Mitsos, Barton, Bollas and co-workers, who formulated parameter estimation in phase equilibrium models as an explicit bilevel optimization problem \cite{Bollas2009VLE, Mitsos2009LLE}. In their approach, the upper-level objective typically minimizes the discrepancy between experimental and predicted phase compositions (e.g. a least-squares error). The lower level consists of multiple Gibbs free energy minimization problems (essentially one for each experiment, and for each potential phase split scenario) which enforce that the model's predictions correspond to true equilibrium states consistent with the observed phase behavior. In simple terms, the fitting procedure ``asks'' at every step: \emph{given the current parameters, does the model predict the correct number of phases and phase types for each experiment, and are those phases globally stable?} These questions are answered by solving the inner Gibbs minimization problems to global optimality. If the model with a given parameter set would predict an extra spurious phase or the wrong type of phase split, the global minimum of the appropriate inner problem will reveal this, and such a parameter set will be rejected in the bilevel optimization. By searching over parameter space with this rigorous inner check, the algorithm can identify parameter values that reproduce \emph{exactly} the experimentally observed phase behavior: not just the measured data points, but the correct qualitative regime (no missing or extra phases, correct azeotrope type, etc.) \cite{Bollas2009VLE, Mitsos2009LLE}. Bollas \emph{et al.} (2009) demonstrated this bilevel formulation on vapor--liquid and vapor--liquid--liquid equilibrium data for binary mixtures \cite{Bollas2009VLE}. They showed that conventional fitting methods often converge to solutions that appear to fit the data well yet predict an extra liquid--liquid split or misclassify a homogeneous azeotrope as heterogeneous, precisely the issues noted above \cite{Bollas2009VLE, Mitsos2007}. The bilevel approach successfully avoids those erroneous fits by explicitly excluding any parameter set that fails the global stability tests. As a result, the parameters obtained from the bilevel formulation yield thermodynamically consistent predictions across the entire composition range \cite{Mitsos2007, Mitsos2009LLE}.

Mathematically, the bilevel formulation for parameter estimation can be written in a simplified form as:
\begin{equation}\label{eq:bilevel}
\begin{aligned}
    \min_{\mathbf{p}} \; & F(\mathbf{p}) \;=\; \sum_{i=1}^{k} w_{i}\,\big\|\mathbf{x}^{\mathrm{pred}}_{i}(\mathbf{p}) - \mathbf{x}^{\mathrm{exp}}_{i}\big\|^{2}, \\[0.5em]
    \text{s.t.}\;& \mathbf{x}^{\mathrm{pred}}_{i}(\mathbf{p}) = \arg\min_{\mathbf{x},\;\mathrm{phases}} \; G_{i}\!\big(\mathbf{x}; \mathbf{p}\big), \qquad i=1,\dots,k,
\end{aligned}
\end{equation}
where $\mathbf{p}$ represents the model parameters (e.g.\ binary interaction parameters in an activity-coefficient model), $\mathbf{x}^{\mathrm{exp}}_{i}$ are the measured equilibrium compositions for experiment $i$, and $G_{i}(\mathbf{x};\mathbf{p})$ is the Gibbs free energy of the system in experiment $i$'s conditions as a function of the phase split $\mathbf{x}$ (phase compositions and amounts). The inner minimization $\min G_{i}$ is subject to material balance constraints and can produce one or multiple equilibrium phases; essentially, it encodes the condition that the system's free energy is at a global minimum for the given $\mathbf{p}$. Solving \eqref{eq:bilevel} is challenging: each inner problem is a nonconvex optimization (and may itself require a global solver to avoid false solutions), and the outer problem must propagate the global optimality conditions. Mitsos \emph{et al.} addressed this by using advanced global optimization techniques. In their implementation, the bilevel problem is transformed into a single-level formulation by replacing each inner minimization with equivalent constraints (such as the Karush--Kuhn--Tucker conditions of the Gibbs minimization, plus additional constraints to enforce global optimality) \cite{Mitsos2009LLE, Mitsos2007}. This leads to a large, highly nonconvex nonlinear programming (NLP) problem. They then apply deterministic global NLP solvers (for example, BARON) within a branch-and-bound framework to solve it \cite{Gumus2001}. Notably, global solution techniques are necessary even to find a feasible initial point for these problems, because if any inner problem is not solved to global optimality, the constraints can be violated. The result is a computationally intensive but rigorously reliable approach: if the algorithm converges, it has found a set of parameters $\mathbf{p}$ that globally minimizes the fitting objective while guaranteeing that each experiment's phase equilibrium condition is satisfied with no spurious phases. This rigor was unprecedented in thermodynamic model fitting. The bilevel formulation thereby ensures \emph{thermodynamic consistency} of the fitted model by embedding the second law (global energy minimum) directly into the estimation procedure. Subsequent case studies showed that this method can fit models like NRTL to complex phase diagrams, such as mixtures with multiple azeotropes or liquid--liquid splitting, without the qualitative errors that plagued earlier methods \cite{Mitsos2007, Bollas2009VLE}.

It is worth noting that enforcing global phase equilibrium in the fit effectively turns on/off certain model features. For example, if an incorrect heterogeneous azeotrope were predicted by a given parameter set, the inner problem would find a lower-energy two-phase solution, signaling that the parameter set is not acceptable \cite{Bollas2009VLE}. Thus, the optimizer will favor parameter sets that do not allow that false phase split. In a sense, the algorithm searches for a Gibbs energy surface topology that exactly matches the observed reality, a concept very much in line with the \emph{topological analysis} of Gibbs surfaces. In fact, around the same time other researchers were examining the topology of Gibbs free energy surfaces to understand modeling challenges. Olaya \emph{et al.} (2008) analyzed ``island type'' ternary LLE systems, where an immiscible region is completely enclosed by a miscible region in the composition space \cite{Olaya2008}. Their study revealed that standard models (NRTL/UNIQUAC) might struggle to fit these islands unless the parameter values produce very specific surface curvatures. This underscores the importance of considering global features of the energy surface (such as multiple local minima separated by barriers) during parameter fitting. The bilevel framework inherently does this by searching the energy surface for alternate minima at every iteration. In summary, the work of Mitsos, Barton, Bollas and co-workers in 2007--2009 established a rigorous foundation for treating phase equilibrium parameter estimation as a bilevel (or even a semi-infinite) optimization problem, thereby overcoming key consistency issues.

\subsection{Spurious phases from standard regression: documented case studies}\label{sec:spurious-phases}

The qualitative failure mode highlighted by Bollas, Barton \& Mitsos~\cite{Bollas2009VLE,Mitsos2009LLE}, namely a parameter set that interpolates the data points yet predicts a phase split that was never observed, is not a hypothetical concern. It is documented for several mainstream cubic equation of state (CEOS) regressions in the literature, and Glass, Djelassi \& Mitsos~\cite{Glass2018CEOS} 
recently catalogued three of them in detail. We summarise each here because they motivate the present work just as concretely as they motivate the Glass formulation.

\textbf{CF$_4$/CHF$_3$ with SRK (Asselineau, Bogdani\'c \& Vidal, 1978).} The Soave--Redlich--Kwong fit reported by Asselineau \emph{et al.}~\cite{Asselineau1978} 
predicts a liquid-phase Gibbs free energy that is mildly nonconvex at $T = 145.16$~K and $P = 100.6$~kN\,m$^{-2}$. A flash on this surface using Aspen Plus' RGibbs returns a spurious LLE with $x_{\mathrm{CF_4,I}} = 0.38499$ and $x_{\mathrm{CF_4,II}} = 0.5938$~\cite{Glass2018CEOS}, 
yet the experimental upper critical solution temperature of the binary, measured by Croll \& Scott~\cite{CrollScott1964}, 
is well above the Asselineau-fit operating temperature.

\textbf{Acetone/water with PR (Trebble \& Bishnoi, 1988).} The Peng--Robinson regression of the narrow-boiling acetone/water VLE reported by Trebble \& Bishnoi~\cite{TrebbleBishnoi1988} 
yields a liquid-phase Gibbs free energy that is locally nonconvex on the acetone-rich side. Flashing the corresponding feed at $T=373.124$~K and $P=266.83$~kN\,m$^{-2}$ produces a false LLE with $x_{\mathrm{C_3H_6O,I}} = 0.27448$ and $x_{\mathrm{C_3H_6O,II}} = 0.44173$~\cite{Glass2018CEOS}, 
even though only VLE is observed in the original measurements of Griswold \& Wong~\cite{GriswoldWong1952}. 
A similar pathology is reported by Trebble \& Bishnoi for the methanol/benzene system, where PR and SRK both predict a spurious second liquid phase even though only VLE has been measured (Strubl \emph{et al.}, 1973), as captured by Glass \emph{et al.}~\cite{Glass2018CEOS}. 

\textbf{Process-simulation impact.} The above examples are not academic curiosities. Each of the three flagged regressions is reproducible in a commercial flowsheet simulator: Glass \emph{et al.}~\cite{Glass2018CEOS} verified them with Aspen Plus 8.8 RGibbs and showed that the engineer downstream of such a regression sees physically meaningless phase splits at design conditions. 
The root cause is the same in every case: standard regression formulations enforce only the necessary equilibrium criterion, so the optimiser is free to settle on any parameter set that places the model curve through the data, including parameter sets whose Gibbs surface admits low-lying tangent planes that the regression objective is blind to~\cite{Glass2018CEOS,Bollas2009VLE,Mitsos2009LLE}. 
Constructive remedies that make the sufficient criterion part of the optimisation, rather than an \emph{a posteriori} check, are reviewed next.

\subsection{Advances and Extensions in Bilevel Optimization for Energy Surfaces}
The introduction of a bilevel formulation for phase equilibrium problems spurred further research and provided a benchmark for reliability. Subsequent efforts can be grouped into a few categories: (i) improvements and alternative algorithms for solving the bilevel problem or its single-level reformulation, (ii) broader applications of bilevel optimization to related problems (e.g. reactive systems, higher-dimensional mixtures), and (iii) general advances in global optimization of energy surfaces (including potential energy landscapes) that can support these applications.

\textbf{Deterministic global optimization approaches:} Deterministic methods guarantee finding the true global optimum given sufficient computational effort, and several such methods have been adapted to phase equilibrium calculations. Prior to the bilevel formulation, deterministic global solvers were already applied to the inner problems (stability analysis and Gibbs energy minimization) \cite{Piro2016}. Gau \emph{et al.} (2000) combined interval Newton methods with classical parameter estimation for VLE data, showing that multiple distinct sets of model parameters could satisfy local optimality conditions \cite{Piro2016}. This highlighted the existence of \emph{multiple} solutions to the fitting problem (due to nonconvex error surfaces), reinforcing the need for global methods. Later, in the bilevel context, researchers explored tailored branch-and-bound strategies and convex relaxations to reduce the computational burden. One notable contribution is the work on solving nonlinear bilevel programs to global optimality by G\"um\"us and Floudas \cite{Gumus2001}. They presented a general branch-and-bound algorithm based on convex underestimation (the $\alpha$BB method) to handle twice-differentiable bilevel problems. While their formulation was generic (not specific to thermodynamics), such advances have been influential. Mitsos \cite{Mitsos2009LLE} also addressed the closely related class of semi-infinite programs (SIPs) by proposing a method to globally solve SIPs without convexity assumptions. This is directly relevant because the Gibbs stability constraint, that no lower-energy phase exists, is effectively a semi-infinite condition. The bounding algorithm of Mitsos, Lemonidis \& Barton~\cite{Mitsos2008BLP} 
generalised this idea to bilevel programs with nonconvex inner programs without convexity assumptions, providing the algorithmic backbone subsequently used by Glass \emph{et al.}~\cite{Glass2018CEOS} for the CEOS regression problem. 
Recent surveys of nonconvex semi-infinite programming~\cite{djelassi2021recent} 
catalogue the discretisation, lower-bounding, and adaptive-restriction strategies that this lineage of work has produced.

\textbf{Stochastic and hybrid optimization approaches:} In parallel, many researchers have applied stochastic or heuristic optimization methods to energy surface problems, trading guarantees of optimality for efficiency. Numerous studies have reported applications of metaheuristics such as simulated annealing, genetic algorithms, particle swarm optimization, differential evolution, and hybrid methods to phase equilibrium calculations and parameter estimation \cite{Zhang2011}. Although stochastic methods can often locate a near-global solution efficiently, purely heuristic approaches cannot guarantee convergence to the true global optimum. Therefore, for critical applications where a false optimum could lead to erroneous designs, deterministic global methods or thorough verification are preferred.

\textbf{Applications to complex phase equilibrium problems:} With the availability of better algorithms, researchers attacked more complex equilibrium problems as bilevel or global optimization tasks. One direction is reactive phase equilibrium, where the inner problem includes chemical reaction extents in addition to phase splitting. Although more complex, the principle remains the same: the Gibbs free energy (now including chemical contributions) is minimized at equilibrium \cite{Bollas2009VLE}. Another notable application is the prediction of azeotropes and critical points via global optimization. Reviews have shown that global methods can map out entire phase diagrams and locate features such as azeotropes by systematically exploring the energy landscape \cite{Zhang2011}. Integrating global phase stability checks into parameter estimation greatly improves the reliability of phase diagram predictions.

\textbf{Bilevel programs for cubic equations of state:} The Mitsos--Bollas--Barton bilevel formulation~\cite{Mitsos2009LLE,Bollas2009VLE} treats the lower-level state variable as the composition vector $\mathbf{x}$ and is therefore tailored to $\Delta G^E$ activity-coefficient models. Imposing the same rigour on a CEOS regression is harder because the cubic in molar volume is itself an equality constraint that the lower-level optimiser must respect, and because the cubic admits up to three real roots that the regression must discriminate between. The earliest constrained-regression attempts date back to Schwartzentruber, Galivel-Solastiouk \& Renon~\cite{Schwartzentruber1987} 
and the Englezos--Kalogerakis--Bishnoi line~\cite{Englezos1989,Englezos1990}, 
which used the implicit Gibbs-stationarity criterion of Nagarajan, Cullick \& Griewank~\cite{NagarajanCullick1991} as the in-problem stability test but stopped short of solving the resulting bilevel problem to global optimality.

Glass, Djelassi \& Mitsos~\cite{Glass2018CEOS} closed this gap by extending the Mitsos--Bollas--Barton framework to CEOS through three nested lower-level problems: a Baker-style isopotential and tangent-plane-distance test (LLP$_1$), a mechanical-stability test on the volume root (LLP$_2$), and a phase-split-count test (LLP$_3$). 
The cubic itself is enforced as an equality constraint, so the resulting program is solved using the convergent bounding algorithm of Mitsos, Lemonidis \& Barton~\cite{Mitsos2008BLP} 
extended by Djelassi, Glass \& Mitsos to handle equality constraints on the lower level~\cite{djelassi2021recent}, 
and implemented in the BARON-backed BOARPET tool. The method is demonstrated on the C$_5$H$_{12}$/H$_2$S binary, on which the BLP converges in roughly 4.5\,h per case study; Glass \emph{et al.} explicitly note that this cost is prohibitive for industrial-scale problems with extensive measurement data or many components~\cite{Glass2018CEOS}. 
Complementary lines of work attack the in-problem root discrimination directly: the convex-hull test of Joseph \emph{et al.}~\cite{Joseph2017}, 
the global-tangent-plane minimisation of Harding \& Floudas via $\alpha$BB~\cite{HardingFloudas2000}, 
and the direct multiphase Gibbs minimisation of Nichita, G\'omez \& Luna~\cite{Nichita2002}. 
Glass \emph{et al.} close their outlook by noting that an extension to non-cubic equations of state, including PC-SAFT, awaits an appropriate root-discrimination criterion~\cite{Glass2018CEOS}: 
this open question motivates the model-agnostic geometric reformulation pursued in the present work.

\textbf{Potential energy surfaces and materials design:} Beyond classical thermodynamics, bilevel optimization has promising applications in materials science and molecular engineering. In these problems the energy surface is a potential energy landscape (e.g., for a molecule or crystal), and equilibrium configurations correspond to minima on that surface. For example, force-field calibration involves tuning interatomic potential parameters so that the predicted properties (e.g. lattice constants, binding energies) match experiments or quantum calculations. Often the property evaluation requires an inner optimization (geometry relaxation), mirroring the phase equilibrium fitting problem. Although such studies might not explicitly use bilevel programming terminology, the structure is analogous. More broadly, the energy landscape exploration community has developed global optimization methods to find multiple low-lying minima on complex potential energy surfaces \cite{Endres2018}. Techniques such as basin hopping and other global search strategies have proven invaluable for mapping energy landscapes and ensuring that no candidate equilibrium state is overlooked. While the simplicial homology global optimization (SHGO) algorithm was introduced to efficiently locate all local and global minima \cite{Endres2018}, here its role is secondary except where it offers new insights into energy surface exploration.

In summary, the field of bilevel optimization applied to energy surface problems has matured significantly over the past two decades. Early foundational ideas like Gibbs' phase rule and the tangent plane criterion laid the groundwork by framing equilibrium as an extremum principle \cite{Mitsos2007}. Building on this, global optimization methods were developed to reliably solve the inner phase stability and equilibrium problems \cite{Bollas2009VLE, Zhang2011}. The crucial leap to bilevel formulations enabled the direct inclusion of those inner problems into parameter estimation and design, ensuring consistency between model predictions and physical reality \cite{Bollas2009VLE, Mitsos2009LLE}. Deterministic and stochastic global optimization strategies have since been refined, and recent innovations such as topological and homological methods exemplified by SHGO promise even more robust exploration of complex energy surfaces \cite{Endres2018}. Ultimately, incorporating global features of energy surfaces into the optimization framework is essential for both accurate model fitting and innovative materials design.

\subsection{Challenges and motivation}
Many open questions remain in the modelling of phase equilibria. In addition to the numerical challenges that are intrinsic to bilevel optimisation, there are physical-modelling challenges. For example, Olaya \emph{et al.}
\cite{Olaya2008} demonstrated that the popular NRTL and UNIQUAC models do not admit any parameter set that correctly fits a particular class of ternary island systems. Bollas \emph{et al.} \cite{Mitsos2009LLE, Bollas2009VLE} further demonstrated the need for a well-behaved resultant model whose constraints enforce the correct number of phase splits, exclude spurious phases, and ensure that the predicted phases are globally stable in regions of instability. These types of defects are best understood by analysing the underlying Gibbs free-energy surface predicted by the model. As an illustration,
\autoref{fig:octanol-water-tangent-plane} below shows two parameter sets for the same NRTL model that both reproduce the experimental tie-line, yet only one of them represents the true physical solution of the underlying energy surface; the other introduces a spurious additional phase split.

In our reformulation we define the applicability of a given thermodynamic model to a particular physical system more rigorously than has previously been done. Our reformulation also generalises earlier rigorous statements of the problem first formulated by Bollas \emph{et al.} \cite{Mitsos2009LLE, Bollas2009VLE} and most recently extended to cubic equations of state by Glass, Djelassi \& Mitsos~\cite{Glass2018CEOS}, who solve a bilevel program with three nested lower-level problems (a tangent-plane criterion, a mechanical-stability test, and a phase-split-count test) using BARON-backed branch-and-bound at a cost of approximately 4.5\,h per binary case study, a cost that the authors themselves identify as prohibitive for industrial-scale problems 
. In contrast, our reformulation collapses these nested lower-level problems into a single geometric object, the dual manifold, evaluated on a finite set of simplex points, eliminating the need for KKT-based reformulation, derivative information, or commercial deterministic solvers. The same construction applies uniformly to activity-coefficient models, cubic equations of state, and non-cubic equations of state such as PC-SAFT or DWPM, addressing the open extension flagged in the Glass outlook~\cite{Glass2018CEOS}. 
Our approach is a geometric and topological reinterpretation of the problem: rather than solving the \emph{local} equilibrium problem numerically and iteratively perturbing the parameters, we define new geometric objects (illustrated in \autoref{fig:methods-dual-manifold-binary} and \autoref{fig:methods-dual-manifold-ternary}) that allow us to correct the \emph{global} energy surface directly. This use of higher-dimensional geometric objects, in contrast to the scalar gradient information used in conventional gradient-based optimisation, simultaneously eliminates defects in the energy surface and is applicable to any class of model, including the non-smooth and occasionally non-continuous models that are needed to describe increasingly complex chemical systems. The method allows the practitioner to fit mathematically complex thermodynamic models to data reliably and efficiently, and opens the door to rigorous modelling and simulation of problems in computational thermodynamics that were previously intractable. In addition to recovering the optimal parameter set for difficult models efficiently, the method can be used to prove when a particular model cannot fit a given physical system for any parameter set, providing an automatic means of model validation that previously required the tedious analysis of, for example, Olaya \emph{et al.}~\cite{Olaya2008}. We further apply the method to VL(L)E systems modelled with the VdW-DWPM equation of state, which exhibits many of the mathematical difficulties characteristic of modern, mathematically complex thermodynamic models.

\begin{figure}[H]
\centerline{\includegraphics[width=0.85\textwidth]{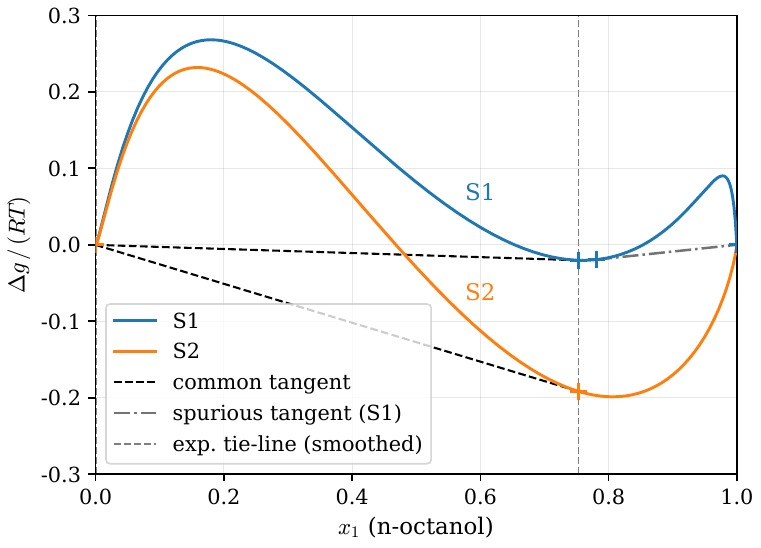}}
\caption{Gibbs tangent-plane diagram for the binary n-octanol\,(1)--water\,(2) at $T = 313.5$\,K using literature values of the NRTL binary interaction parameters. Curves S1 and S2 correspond to two different NRTL parameter sets, and the vertical lines mark the smoothed experimental data of S{\o}rensen and Arlt (1979). Both S1 and S2 reproduce the correct liquid-liquid tie-line at $(x_{\alpha}, x_{\beta}) = (1.44\times10^{-4},\,0.753)$; however, S1 incorrectly predicts an additional spurious phase split spanning from $x \approx 0.78$ to the n-octanol-rich boundary. State-of-the-art bilevel algorithms cannot detect such defects efficiently and become intractable as the number of components grows. Figure computed with the present NRTL implementation following the case study of
\cite{Mitsos2009LLE}. 
\label{fig:octanol-water-tangent-plane}
}
\end{figure}

%% file: sections/methods.tex

We start our development by giving a perspective and brief history of our geometric approach based on the energy surface description of the problem in \autoref{sec:method_esurf}, then we provide an intuitive visual demonstration of the derivation of our method in 1D, followed by its generalization and finally we provide the explicit algorithms for the method in n-component systems.

\subsection{The Energy Surface Perspective: Derivation and History} \label{sec:method_esurf}

The concept of an energy surface $U(\mathbf{x},\mathbf{p})$ is fundamental across the physical sciences, serving as a cornerstone for the analysis of system stability and equilibrium. In the Newtonian picture, $U$ is a potential energy landscape governing the motion of particles, and a configuration $\mathbf{x}$ is in mechanical equilibrium when the force vanishes,
\begin{equation}\label{eq:newton-force-balance}
    \mathbf{F} \;=\; -\nabla_{\mathbf{x}} U(\mathbf{x},\mathbf{p}) \;=\; \mathbf{0}.
\end{equation}
A canonical concrete instance of this view is the Lennard--Jones pair potential
\begin{equation}\label{eq:lj-potential}
    U(x,\mathbf{p}) \;=\; 4\varepsilon\!\left[\left(\frac{\sigma}{x}\right)^{12} - \left(\frac{\sigma}{x}\right)^{6}\right], \qquad \mathbf{p} = (\varepsilon,\sigma),
\end{equation}
whose equilibrium pair separation is the unique zero of $dU/dx$. Equation~\eqref{eq:newton-force-balance} is the simplest, broadest-audience reading of equilibrium as a gradient-zero condition on a parameterised energy surface, and it underlies essentially every classical treatment of mechanical stability.

For a multicomponent thermodynamic system at given pressure $P$ and temperature $T$, however, force balance alone is insufficient. The system also exchanges entropy and matter with its surroundings, so the state variables governing equilibrium include the entropy $S$, the volume $V$, and the mole numbers $\{N_j\}_{j=1}^{n}$. The fundamental thermodynamic equation of Gibbs~\cite{gibbs1878} 
expresses this dependence as
\begin{equation}\label{eq:fundamental-U}
    dU \;=\; T\,dS \;-\; P\,dV \;+\; \sum_{j=1}^{n} \mu_{j}\,dN_{j},
\end{equation}
where $\mu_{j}$ is the chemical potential of component $j$. The Legendre transform $G \defeq U - TS + PV$ defines the Gibbs free energy, whose differential
\begin{equation}\label{eq:fundamental-G}
    dG \;=\; -S\,dT \;+\; V\,dP \;+\; \sum_{j=1}^{n} \mu_{j}\,dN_{j}
\end{equation}
takes $(T,P,\{N_{j}\})$ as its natural variables~\cite{gibbs1878,SmithVanNessAbbott2018}. 
At fixed $(T,P)$ the equilibrium criterion therefore reads $\nabla_{\mathbf{x}} G(\mathbf{x},\mathbf{p}) = \mathbf{0}$ on the composition simplex $\Delta^{n}$, and the chemical potentials
\begin{equation}\label{eq:mu-grad-G}
    \mu_{j} \;=\; \frac{\partial G}{\partial x_{j}}
\end{equation}
are exactly the components of that gradient. The mechanical force-balance condition~\eqref{eq:newton-force-balance} is recovered as the special case in which entropy and composition do not enter, that is when the only state variable left to vary is a mechanical coordinate.

It took the nineteenth-century thermodynamics of Gibbs~\cite{gibbs1878} 
to recognise that for a system held at $(T,P,\{N_{j}\})$ the correct potential to minimise is $G$, not $U$, and that even $\nabla_{\mathbf{x}} G = \mathbf{0}$ is only a \emph{necessary} condition: a stationary composition can correspond to a metastable single phase that admits a lower-energy split. The complementary \emph{sufficient} criterion, formalised algorithmically by Michelsen~\cite{Michelsen1982}, 
requires the tangent plane to $G$ at every candidate equilibrium composition to lie everywhere below the surface on $\Delta^{n}$. The combination of $\nabla_{\mathbf{x}} G = \mathbf{0}$ with the global tangent-plane condition is what distinguishes a true thermodynamic equilibrium from a merely stationary point of the Gibbs surface.

The bilevel parameter-estimation problem of \eqref{eq:bilevel_general} inherits exactly this structure. The lower level is the Gibbs minimisation
\begin{equation}\label{eq:lower-level-G}
    \min_{\mathbf{x}\in\Delta^{n}} G(\mathbf{x},\mathbf{p})
\end{equation}
at fixed $(T,P)$ for a candidate parameter vector $\mathbf{p}$, supplemented by Michelsen's global tangent-plane criterion to exclude metastable solutions. The upper level adjusts $\mathbf{p}$ to fit experimental tie-line data $\{\mathbf{x}^{\alpha,i},\mathbf{x}^{\beta,i}\}_{i=1}^{k}$. Throughout the rest of the manuscript we work with the dimensionless reduced Gibbs free energy
\begin{equation}\label{eq:reduced-g}
    g(\mathbf{x},\mathbf{p}) \;\defeq\; \frac{G(\mathbf{x},\mathbf{p})}{RT},
\end{equation}
which removes the explicit $T$-scaling of equation~\eqref{eq:fundamental-G} and yields a dimensionless objective for the inner problem. The geometric reformulation developed in the remainder of \autoref{sec:methods} replaces the lower-level program~\eqref{eq:lower-level-G}, together with its global tangent-plane sufficiency check, by a single scalar object built directly from $g$ and the experimental tie-line endpoints.


\subsection{Energy gradient based method: a 1D motivation}
\label{subsec:energy-gradient-1d}

Before formalising the approach in \autoref{sec:methods}, we use a low-dimensional toy problem to motivate the central geometric idea. In particular we re-use a polynomial from \cite{Mitsos2007} that was used to demonstrate an idealized binary Gibbs free surface. The continuous Gibbs energy surface $G(\mathbf{x},\mathbf{p})$ is replaced by a finite \emph{discrete surface representation}: a piecewise-linear simplicial complex that is diffeomorphic to the original surface on the composition simplex. Equilibria, tangent-plane intersections, and the phase-split structure are then expressed as combinatorial properties of this complex (faces, dual edges, and the supporting hyperplane between equilibrium vertices), rather than as conditions on derivatives of $G$. As we will show, this geometric representation is what makes it possible to replace the inner Gibbs minimization by an explicit \emph{tangent-plane / dual-manifold} object in the constraints, collapsing the bilevel problem to a single-level non-linear program while still admitting non-smooth and non-continuous $G$.

The development of the method that this manuscript proposes is then demonstrated through a sequence of low-dimensional numerical experiments. The geometric intuition built up in one dimension can be directly generalized to the higher-dimensional formulation.

\subsubsection{Setup and the failure of derivative penalties}
The starting point was the observation that, on a binary Gibbs free energy surface, a phase split is encoded by a single tangent line touching the surface at two equilibrium compositions $\mathbf{x}^{\alpha}, \mathbf{x}^{\beta}$. An attempt was made to enforce equilibrium directly through derivative-based feasibility penalties on $g(\mathbf{x}, \mathbf{p})$ alone, penalising violations of the second-derivative non-negativity constraint and slope-equality constraint via $\max(0,\cdot)$ terms with growing constraint pools $\mathcal{Z}_R$ and $\mathcal{Z}_G$. Tested on the binary n-butanol--water NRTL surface (Heidemann \& Mandhane 1973; S{\o}rensen \& Arlt 1979) with the literature parameters $\tau_{12} = 10.1820, \tau_{21} = 3.80341, \alpha = 0.4$, the penalty mechanism was numerically ill-conditioned and collapsed to the convex midpoint $\mathbf{x} = 0.5076$. The lesson was unambiguous: on the energy surface alone, derivative-based feasibility penalties do not encode the global tangent-plane criterion.

\subsubsection{Line-trace formulation on a clean toy surface}
The formulation was rebuilt on a polynomial double-well
\begin{equation}\label{eq:toy-double-well}
f(\mathbf{x}) \;=\; 0.01\,\mathbf{x} \,+\, (\mathbf{x}-0.5)^4 \,-\, 0.2\,(\mathbf{x}-0.5)^2 \,-\, 0.0125,
\end{equation}
whose analytic equilibrium tie-line is $\mathbf{x}^{\alpha,\beta} \approx [0.1758,\,0.8242]$\footnote{This value was changed from the $[0.1835,\,0.8163]$ reported in the Mitsos paper, as the present solver implementation locates a lower solution at the value quoted here.}. The construction comprises three steps:
\begin{enumerate}
    \item Build the affine interpolant $h$ between the equilibrium endpoints, $h(\mathbf{x}^{\alpha}) = g(\mathbf{x}^{\alpha},\mathbf{p}),\; h(\mathbf{x}^{\beta}) = g(\mathbf{x}^{\beta},\mathbf{p})$, parametrised as $\mathbf{x}(\lambda) = \mathbf{x}^{\alpha} + \lambda(\mathbf{x}^{\beta}-\mathbf{x}^{\alpha})$ with $\lambda \in \mathbb{R}$ allowed to leave $[0,1]$.
    \item At four sample points just inside and just outside each anchor, $\lambda \in \{-\epsilon,\, +\epsilon,\, 1-\epsilon,\, 1+\epsilon\}$, evaluate the residual $r_k = g(\mathbf{x}(\lambda_k),\mathbf{p}) - h(\mathbf{x}(\lambda_k))$ and keep only the negative part:
    \begin{equation}\label{eq:negpart}
    [r]_{-} \;\defeq\; \min(r,\,0),
    \end{equation}
    so excursions where the surface dips \emph{below} the chord are penalised but the convex bulk of the surface is not.
    \item Aggregate the penalised samples into a single non-negative scalar $\Phi_{\text{tangent}}(\mathbf{p}) = \sum_k |[r_k]_-|$.
\end{enumerate}
At a true tangent-plane equilibrium the surface lies below the chord on the inside of the tie-line and above it on the outside, so the four directional samples should bracket zero with the correct sign on each side. Penalising only the wrong-sided excursions yields a non-negative objective with a unique zero at the correct $\mathbf{p}$. With one parameter (the prefactor $0.2$ in \eqref{eq:toy-double-well} replaced by $\mathbf{p}$), \texttt{shgo} \cite{Endres2018} recovers $\mathbf{p}^{\star} \approx 0.21$ with $\Phi_{\text{tangent}} = 0$; with two parameters (the additional centre $0.5 \to p_2$), it recovers $(p_1, p_2) = (0.215, 0.500)$. The 3D objective surface over $(p_1, p_2)$ is smooth and single-welled. The geometry of the construction is shown in \autoref{fig:methods-toy-double-well-line-trace}.

\input{figures/methods-toy-double-well-line-trace/methods-toy-double-well-line-trace}

\subsubsection{Sign change at the optimum and the squaring fix}
A further numerical observation in notebook iv was that the unsquared residual
$\sum_k r_k$ \emph{changes sign} across the optimum rather than touching zero, so a naive line-search overshoots. Squaring the aggregated residual,
\begin{equation}\label{eq:single-tieline-objective}
J(\mathbf{p}) \;=\; \left(\,\frac{1}{\epsilon}\sum_k\,|[r_k]_-|\,\right)^{2},
\end{equation}
produces a convex-looking single-well objective whose global minimum at the correct $\mathbf{p}$ is rendered identifiable to a global solver. The $1/\epsilon$ rescaling promotes the directional samples to a finite-difference approximation of the directional derivative $(\partial g/\partial \mathbf{d} - \partial h/\partial \mathbf{d})|_{\mathbf{x}^{\alpha,\beta}}$, where $\mathbf{d} = \mathbf{x}^{\beta} - \mathbf{x}^{\alpha}$. The squared form is used unchanged in the final formulation. \autoref{fig:methods-squared-objective-1d} contrasts the squared and unsquared objectives in the one-parameter case.

\input{figures/methods-squared-objective-1d/methods-squared-objective-1d}

\subsubsection{Generalisation: the dual manifold}
The 1D line-trace argument extends to multi-component systems by replacing the chord with a hyperplane, and to multi-tie-line datasets by combining hyperplanes. We promote the affine interpolant to a callable object $h_i \colon \mathbb{R}^n \to \mathbb{R}$ associated with each tie-line $i$, and define the supporting hyperplane over the simplex as the point-wise minimum:
\begin{equation}\label{eq:hsum}
h(\mathbf{x}) \;=\; \min_{i = 1,\dots,k} h_i(\mathbf{x}).
\end{equation}
The \emph{dual manifold} is then the shifted free-energy surface
\begin{equation}\label{eq:dual-manifold}
\mathcal{M}(\mathbf{x};\mathbf{p}) \;\defeq\; g(\mathbf{x},\mathbf{p}) - h(\mathbf{x}),
\end{equation}
which by construction satisfies $\mathcal{M}(\mathbf{x}^{\alpha,i};\mathbf{p}) = \mathcal{M}(\mathbf{x}^{\beta,i};\mathbf{p}) = 0$ at every experimental tie-line endpoint, and which we conjecture satisfies $\mathcal{M}(\mathbf{x};\mathbf{p}) \geq 0$ over the simplex if and only if $\mathbf{p}$ reproduces every datum as a globally stable equilibrium under the Gibbs tangent-plane criterion. A negative region of $\mathcal{M}$ is a topological defect: a parameter set that fits the data points themselves but violates global stability somewhere else. \autoref{fig:methods-dual-manifold-binary} shows the binary dual manifold and \autoref{fig:methods-dual-manifold-ternary} the ternary generalisation.

\input{figures/methods-dual-manifold-binary/methods-dual-manifold-binary}

\input{figures/methods-dual-manifold-ternary/methods-dual-manifold-ternary}

\paragraph{The five-component objective function}
The line-trace formulation in 1D generalises to a single-level objective by augmenting $J(\mathbf{p})$ with terms that detect and eliminate topological defects on $\mathcal{M}$. A first inner global minimisation locates the set of defective minima
\begin{equation}\label{eq:XDmin}
\mathcal{X}_D^{-}(\mathbf{p}) \;=\; \arg\min_{\mathbf{x} \in \Delta^{n}} \mathcal{M}(\mathbf{x};\mathbf{p}) \;\setminus\; \{\mathbf{x}^{\alpha,i},\mathbf{x}^{\beta,i}\}_{i=1}^{k},
\end{equation}
and a second locates the corresponding maxima $\mathcal{X}_D^{+}(\mathbf{p}) = \arg\min_{\mathbf{x}} (-\mathcal{M}(\mathbf{x};\mathbf{p}))$. Both are solved with SHGO \cite{Endres2018} on the simplex with a sum-to-one nonlinear constraint. The overall objective decomposes into five named components:
\begin{align}
\Phi_{\text{tangent}}(\mathbf{p}) & = \sum_{i=1}^{k} \sum_{k=1}^{4} \bigl|\bigl[g(\mathbf{x}_k^i,\mathbf{p}) - h_i(\mathbf{x}_k^i)\bigr]_{-}\bigr|, & \text{tangent-plane sub-sampling}\nonumber\\
\Phi_{\text{plane}}(\mathbf{p}) & = \sum_{\mathbf{x} \in \mathcal{X}_D^{-}} \bigl|\bigl[g(\mathbf{x},\mathbf{p}) - h(\mathbf{x})\bigr]_{-}\bigr|, & \text{plane violation at defects}\nonumber\\
\Phi_{\text{topo}}(\mathbf{p}) & = \sum_{\mathbf{x} \in \mathcal{X}_D^{-}} \bigl\|\mathbf{x}_{\text{near}} - \mathbf{x}\bigr\|_{2}, & \text{topological pull to equilibrium}\nonumber\\
\Phi_{\text{local}}(\mathbf{p}) & = \sum_{\mathbf{x} \in \mathcal{X}_D^{-}} \bigl\|(\mathbf{x}^{+}_{\text{near}}, g(\mathbf{x}^{+}_{\text{near}})) - (\mathbf{x}, g(\mathbf{x}))\bigr\|_{2}, & \text{local-topology amplitude}\nonumber\\
\Phi_{\text{phase}}(\mathbf{p}) & : \text{partition of $\arg\min \mathcal{M}$ into $\mathcal{X}_D^{-}$ vs.\ equilibrium endpoints}, & \text{phase assignment}\label{eq:five-components}
\end{align}
where $\mathbf{x}_{\text{near}} = \arg\min_{\mathbf{y} \in \{\mathbf{x}^{\alpha,i},\mathbf{x}^{\beta,i}\}} \|\mathbf{y}-\mathbf{x}\|_2$ and $\mathbf{x}^{+}_{\text{near}}$ is the nearest defective maximum in $\mathcal{X}_D^{+}$ measured in the joint composition--energy space $(\mathbf{x}, g(\mathbf{x},\mathbf{p}))$. Geometrically each pair $(\mathbf{x}, \mathbf{x}^{+}_{\text{near}})$ is the minimum and the bounding maximum of one connected component of the negative region of $\mathcal{M}$, so $\Phi_{\text{local}}$ measures how deep and how wide that component is, and driving it to zero removes the component rather than merely relocating it. The four samples in $\Phi_{\text{tangent}}$ are exactly the four-point line-trace of \eqref{eq:single-tieline-objective}.

The total objective for a multi-temperature isobaric dataset is
\begin{equation}\label{eq:phi-total}
\Phi(\mathbf{p}) \;=\; \sum_{T \in \mathcal{D}} \left(\,\frac{1}{\epsilon}\, \bigl[\Phi_{\text{tangent}} + \Phi_{\text{plane}} + \Phi_{\text{topo}} + \Phi_{\text{local}}\bigr](\mathbf{p};\,T,\mathbf{x}^{\alpha},\mathbf{x}^{\beta})\,\right)^{2},
\end{equation}
which is a single-level non-linear program in $\mathbf{p}$ alone. The bilevel structure has been replaced by a scalar function whose evaluation requires two SHGO inner calls (for $\mathcal{X}_D^{\pm}$) per temperature, but no KKT system, no Lagrange multipliers, and no dual cuts. \autoref{fig:methods-five-component-decomposition} shows the contributions of the five components on a one-parameter sweep, and \autoref{fig:methods-defect-sweep} shows directly why the inner SHGO call is necessary: holding the tie-line endpoints fixed and varying only the depth of a Gaussian defect placed midway between them leaves the closed-form $\Phi_{\text{tangent}}$ at machine noise, while only the defect-aware terms $\Phi_{\text{plane}}, \Phi_{\text{topo}}, \Phi_{\text{local}}$ register the defect.

\input{figures/methods-five-component-decomposition/methods-five-component-decomposition}

\input{figures/methods-defect-sweep/methods-defect-sweep}

\paragraph{Two-stage solver.}
The defect-detection terms $\Phi_{\text{plane}}, \Phi_{\text{topo}}, \Phi_{\text{local}}$ require an inner SHGO call per evaluation, which is expensive. We therefore split the outer optimisation into two stages~\cite{Endres2018}:
\begin{enumerate}
    \item \emph{Tangent-plane pass.} Run SHGO on $\sum_T (\Phi_{\text{tangent}}/\epsilon)^2$ alone (no inner SHGO). Default sampling density: $N_{\text{tan}} = 1000$.
    \item \emph{Defect-detection.} On the candidate $\mathbf{p}$, build $\mathcal{M}$ at every $T$ and run SHGO once on the simplex. If any minimum survives the equilibrium-endpoint stripping ($\mathcal{X}_D^{-}\neq\emptyset$), defects are present.
    \item \emph{Defect-aware refit.} If defects exist, restart SHGO on the full $\Phi(\mathbf{p})$ of \eqref{eq:phi-total}. Default inner sampling: $N_{\text{def}} = 20$.
    \item Re-check defects; loop up to a fixed maximum number of outer iterations.
\end{enumerate}
The two-stage solver collapses to the single tangent-plane pass for well-behaved systems, and only pays the cost of the inner SHGO when the topological constraints are actually binding.

A complete algorithmic specification of the tangent-plane pass, the defect-detection routine, the defect-aware objective, and the master two-stage solver is given in \ref{appendix:algorithms}.

\paragraph{Verification on toy and real data.}
The formulation reproduces the Mitsos LLE benchmark on the toy double-well of \eqref{eq:toy-double-well} with $\Phi(\mathbf{p}) = 0$ at the analytic optimum, finds the toluene--water--aniline ternary equilibrium of \cite{Mitsos2007} to within $5\times 10^{-3}$ in the free parameter $\tau_{12}$, and on the n-butyl-acetate--water binary LLE case (Case 1 of \cite{Mitsos2009LLE}) achieves the reference relative-error metrics. \autoref{fig:methods-defect-elimination-solution} shows the dual manifold before and after defect-aware fitting on the toy surface; the four LLE benchmark systems are reported in detail in \autoref{sec:results-lle} and Supplementary Material~S1.

\input{figures/methods-defect-elimination-solution/methods-defect-elimination-solution}

\paragraph{Proof obligations.}
The formulation rests on three statements, all of which are discharged in the appendices, two of them unconditionally and one under an explicitly stated assumption:
\begin{enumerate}
    \item The conjecture $\mathcal{M}(\mathbf{x};\mathbf{p}) \geq 0\ \forall \mathbf{x} \in \Delta^{n} \iff \mathbf{p}$ is a globally consistent fit. Proved as \autoref{theor:M-nonneg-iff-fit} in \autoref{appendix:proof-conjecture-M}, for continuous $g$ and mutually consistent tie-line data, using the definitions and regularity results collected in \autoref{appendix:older-formulation}.
    \item Vanishing of $\Phi(\mathbf{p})$ as a sufficient condition for global stability at every datum. Proved as \autoref{theor:phi-zero-sufficient} in \autoref{appendix:phi-zero-proof}. The argument is a compactness argument on the exact defective-minima set \eqref{eq:XDmin} and assumes nothing about the inner solver; what the inner SHGO call contributes is isolated separately in Lemma~\ref{lem:inner-shgo-exact}.
    \item Convergence of the two-stage outer loop given that $\mathcal{X}_D^{\pm}$ is discontinuous in $\mathbf{p}$ at bifurcations of the surface topology (so $\Phi$ is not smooth at those points). Addressed in \autoref{appendix:convergence-proof} in two parts, described next.
\end{enumerate}
The third obligation separates into an unconditional and a conditional half. Whenever a parameter vector with $\Phi(\mathbf{p}^{\star}) = 0$ is returned, it is certified globally optimal by the non-negativity of $\Phi$ alone (\autoref{prop:zero-certificate}) and, by \autoref{theor:phi-zero-sufficient}, reproduces every measured tie line as a globally stable equilibrium; that certificate is verifiable a posteriori from the returned objective value and does not depend on the solver. That such a point is found whenever one exists inherits the deterministic adequate-sampling guarantees of SHGO (\cite{Endres2018}, Theorems~3 and~5), 
which is the strongest form of guarantee available for a black-box objective and is deterministic rather than probabilistic. The one step that remains an assumption rather than a proof is that the bifurcation strata of $\Phi$ carry zero measure; it is stated as Assumption~1 of \autoref{appendix:convergence-proof} and justified there for real-analytic model families.

%% file: figures/methods-toy-double-well-line-trace/methods-toy-double-well-line-trace.tex
\begin{figure}[H]
    \centering
    \includegraphics[width=0.85\linewidth]{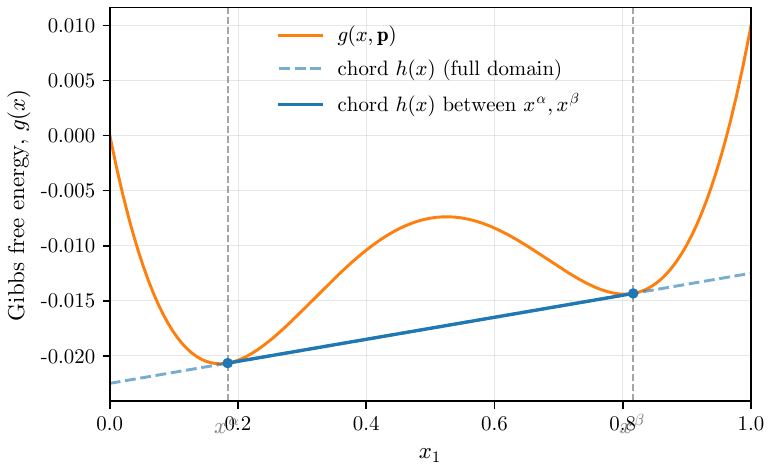}
    \caption{Toy polynomial double-well
    $g(x) = 0.01 x + (x-0.5)^{4} - 0.2 (x-0.5)^{2} - 0.0125$
    with the equilibrium endpoints
    $\mathbf{x}^{\alpha} \approx 0.1838$,
    $\mathbf{x}^{\beta}  \approx 0.8162$
    and the affine chord $h(\mathbf{x})$ between them
    (solid segment, with the dashed extension into the bulk).
    The line-trace formulation samples the residual
    $r_k = g(\mathbf{x}_k) - h(\mathbf{x}_k)$ at
    $\lambda \in \{-\epsilon, +\epsilon, 1-\epsilon, 1+\epsilon\}$
    and aggregates only the negative-side excursions
    (\autoref{eq:negpart}) into
    $\Phi_{\text{tangent}}(\mathbf{p})$.}
    \label{fig:methods-toy-double-well-line-trace}
\end{figure}

%% file: figures/methods-squared-objective-1d/methods-squared-objective-1d.tex
\begin{figure}[H]
    \centering
    \includegraphics[width=0.85\linewidth]{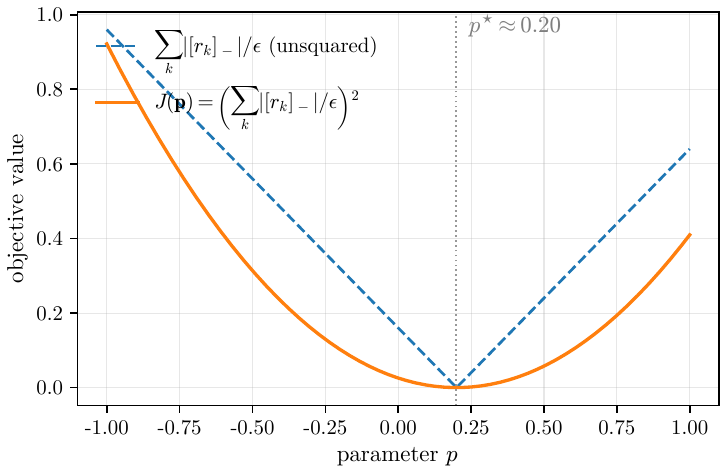}
    \caption{Squared single-tie-line objective
    $J(\mathbf{p}) = \left(\frac{1}{\epsilon}\sum_k |[r_k]_-|\right)^{2}$
    of \autoref{eq:single-tieline-objective}, swept over the single free
    parameter of the toy double-well
    $g(\mathbf{x}; \mathbf{p}) = 0.01\mathbf{x} + (\mathbf{x}-0.5)^{4}
    - \mathbf{p}\,(\mathbf{x}-0.5)^{2} - 0.0125$
    with $\mathbf{x}^{\alpha,\beta} = [0.1838, 0.8163]$ and
    $\epsilon = 10^{-7}$. The unsquared residual changes sign across
    the optimum; squaring renders the objective a convex-looking
    single-well function that is identifiable to a global solver, with
    $J(\mathbf{p}^{\star}) = 0$ at $\mathbf{p}^{\star} \approx 0.2$.}
    \label{fig:methods-squared-objective-1d}
\end{figure}

%% file: figures/methods-dual-manifold-binary/methods-dual-manifold-binary.tex
\begin{figure}[H]
    \centering
    \includegraphics[width=0.85\linewidth]{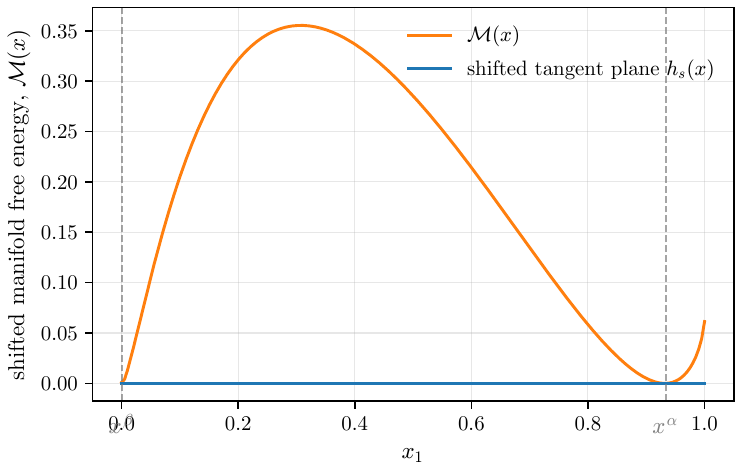}
    \caption{Binary dual manifold
    $\mathcal{M}(\mathbf{x}; \mathbf{p}) =
    g(\mathbf{x}, \mathbf{p}) - h(\mathbf{x})$
    of \autoref{eq:dual-manifold} for the quartic-plus-Gaussian-dip
    toy, evaluated with
    $\mathbf{x}^{\alpha,\beta} = [0.1838, 0.8163]$.
    The orange curve is $\mathcal{M}(\mathbf{x})$ and the blue line is
    the shifted hyperplane $h_s(\mathbf{x}) = 0$. By construction
    $\mathcal{M}(\mathbf{x}^{\alpha,i}; \mathbf{p}) =
    \mathcal{M}(\mathbf{x}^{\beta,i}; \mathbf{p}) = 0$, and
    $\mathcal{M} \geq 0$ over the simplex precisely when $\mathbf{p}$
    reproduces every datum as a globally stable equilibrium under the
    tangent-plane criterion.}
    \label{fig:methods-dual-manifold-binary}
\end{figure}

%% file: figures/methods-dual-manifold-ternary/methods-dual-manifold-ternary.tex
\begin{figure}[H]
    \centering
    \includegraphics[width=0.7\linewidth]{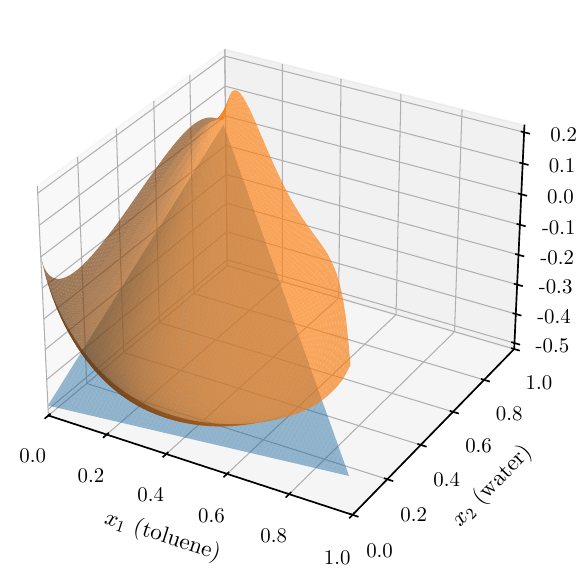}
    \caption{Ternary generalisation of the binary chord construction
    (\autoref{fig:methods-dual-manifold-binary}). The orange surface
    is the NRTL Gibbs free energy
    $g(\mathbf{x}, \mathbf{p})$ on the ternary simplex
    (toluene--water--aniline test system) and the blue plane is the
    supporting hyperplane $h(\mathbf{x})$ of
    \autoref{eq:hsum} that anchors the experimental tie-line
    endpoints. The dual manifold
    $\mathcal{M}(\mathbf{x}; \mathbf{p})$ of
    \autoref{eq:dual-manifold} is the orange surface measured
    relative to this hyperplane.}
    \label{fig:methods-dual-manifold-ternary}
\end{figure}

%% file: figures/methods-five-component-decomposition/methods-five-component-decomposition.tex
\begin{figure}[H]
    \centering
    \includegraphics[width=0.85\linewidth]{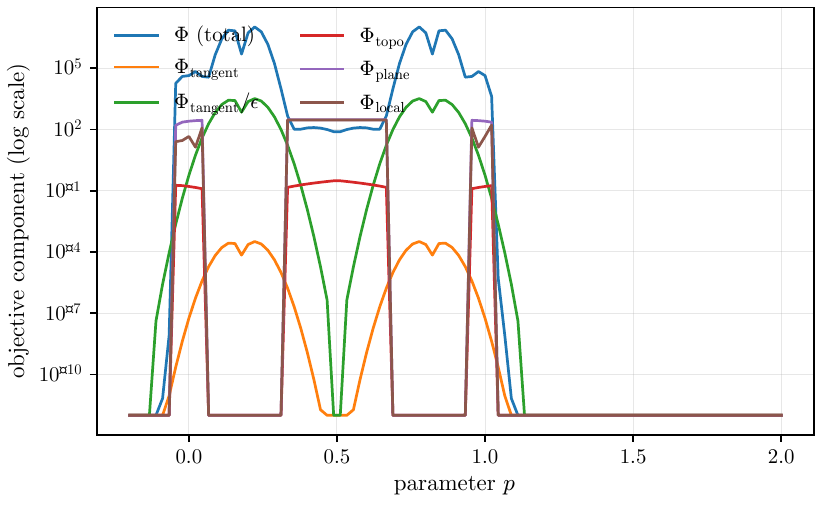}
    \caption{Decomposition of the total objective $\Phi(\mathbf{p})$
    of \autoref{eq:phi-total} into the five named components of
    \autoref{eq:five-components}, swept over the single free parameter
    $\mathbf{p}$ of the quartic-plus-Gaussian-dip toy on a
    semilog-$y$ scale. The tangent-plane violation
    $\Phi_{\text{tangent}}$ dominates away from the optimum;
    $\Phi_{\text{plane}}$, $\Phi_{\text{topo}}$ and
    $\Phi_{\text{local}}$ become active only inside the
    defect-bearing parameter region, illustrating the role of the
    inner SHGO call in detecting topological defects on
    $\mathcal{M}$.}
    \label{fig:methods-five-component-decomposition}
\end{figure}

%% file: figures/methods-defect-sweep/methods-defect-sweep.tex
%
\begin{figure}[H]
    \centering
    \includegraphics[width=\textwidth]{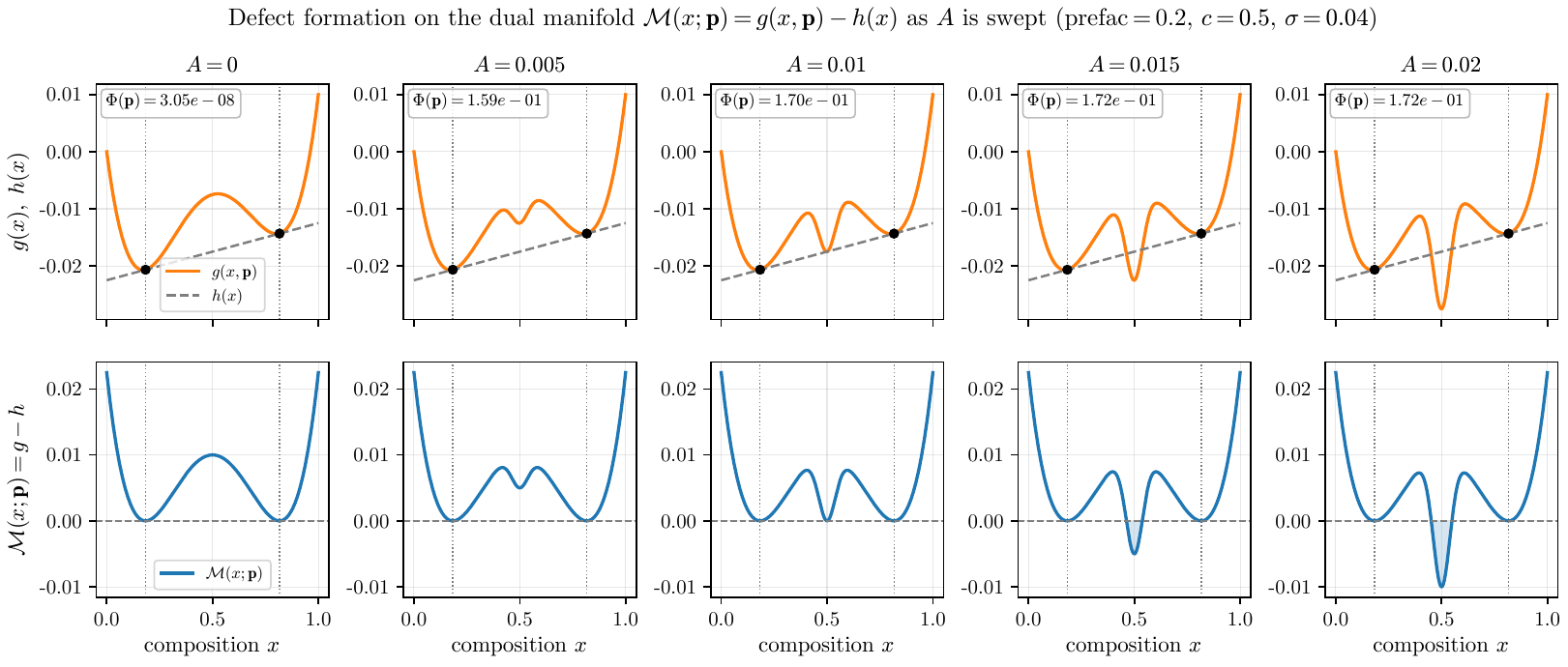}
    \caption[Defect formation on the dual manifold under a parameter sweep]{%
        Defect formation on the dual manifold
        $\mathcal{M}(\mathbf{x};\mathbf{p}) = g(\mathbf{x},\mathbf{p}) - h(\mathbf{x})$
        as the Gaussian-dip amplitude $A$ is swept on the toy surface
        $g_{\text{gauss}}(x;\,\text{prefac}=0.2, A, c=0.5, \sigma=0.04)$
        of \texttt{param.thermo.toy.g\_gauss\_full}, with the corrected Mitsos
        tie-line endpoints $\mathbf{x}^{\alpha,\beta} = [0.1835,\,0.8163]$.
        \emph{Top row}: the Gibbs free energy $g(x,\mathbf{p})$ (orange) and
        the supporting hyperplane $h(x)$ (gray dashed) through the tie-line
        endpoints (black markers). \emph{Bottom row}: the dual manifold
        $\mathcal{M}(x;\mathbf{p})$ (blue) with the zero-line dashed gray;
        regions where $\mathcal{M}<0$ are shaded. At $A=0$ the dual manifold
        is non-negative on the whole simplex (no topological defect, see
        \eqref{eq:dual-manifold}); for $A>0$ a negative basin opens at
        $x = c = 0.5$, signalling a parameter set that fits the tie-line
        endpoints exactly while violating the global tangent-plane criterion
        elsewhere. The reported $\Phi(\mathbf{p})$ is the full five-component
        defect-aware objective of \eqref{eq:phi-total}, evaluated by
        \texttt{isobaric\_obj\_sum\_with\_defects} with one inner SHGO call
        on the simplex; the closed-form four-point line trace
        $\sum_k\,|[r_k]_-|^2$ alone (\texttt{isobaric\_obj\_sum\_no\_defects})
        remains $\sim 3\times10^{-8}$ across the sweep because its sampling
        points coincide with the tie-line endpoints and miss the midpoint
        dip, precisely the motivation for the additional
        $\Phi_{\text{plane}},\Phi_{\text{topo}},\Phi_{\text{local}}$ terms.
    }
    \label{fig:methods-defect-sweep}
\end{figure}

%% file: figures/methods-defect-elimination-solution/methods-defect-elimination-solution.tex
\begin{figure}[H]
    \centering
    \includegraphics[width=0.85\linewidth]{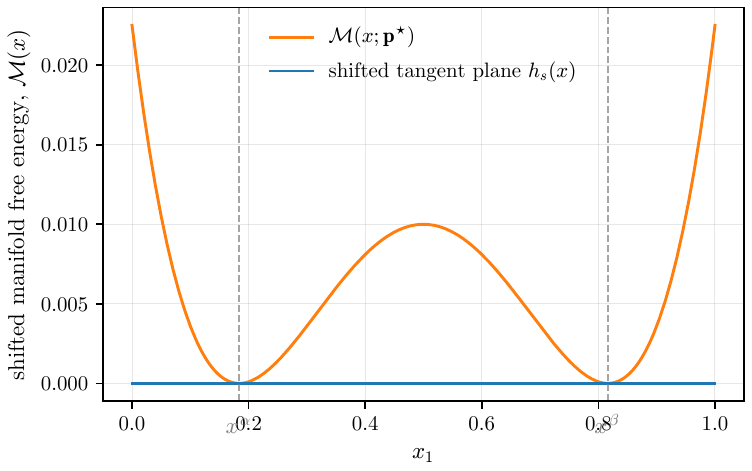}
    \caption{Verification on the toy benchmark: dual manifold
    $\mathcal{M}(\mathbf{x}; \mathbf{p}^{\star})$ at the
    solver-recovered parameter $\mathbf{p}^{\star}$ after minimising
    the full five-component objective $\Phi$
    (\autoref{eq:phi-total}). The manifold touches zero at both
    experimental tie-line endpoints
    $\mathbf{x}^{\alpha}, \mathbf{x}^{\beta}$ and is non-negative
    everywhere on the simplex, so
    $\mathcal{X}_D^{-}(\mathbf{p}^{\star}) = \emptyset$ and the
    recovered parameter satisfies the global tangent-plane criterion.}
    \label{fig:methods-defect-elimination-solution}
\end{figure}

%% file: sections/results.tex
%
%
%
%

The results below form a single argument in three steps: the bilevel
formulation is validated against published literature fits, the
equation-of-state inner solver is validated against a published bilevel
study, and the two are then combined on three multicomponent systems of
industrial consequence.

\paragraph{Step 1: the formulation, against literature LLE fits.}
\autoref{sec:results-lle} regresses NRTL parameters for the four binary
liquid--liquid equilibrium systems of Mitsos, Bollas and
Barton~\cite{Mitsos2009LLE}
and compares the fits, tie line by tie line, against their published
parameter sets. These four cases are the direct literature comparison
for the geometric formulation of \autoref{sec:methods}: the models are
activity-coefficient models with a single Gibbs energy branch, the
experimental tie lines are well tabulated, and an independently
published optimum exists for every case, so any disagreement is
attributable to the formulation rather than to the data.

\paragraph{Step 2: the inner solver, against a published CEOS bilevel
study.} The three industrial cases that follow are all cubic-equation-of-state
fits, for which the inner problem acquires a second Gibbs energy branch
and a root-selection step that the LLE cases never exercise. That
machinery is validated separately, in
\autoref{appendix:ceos-validation-sec}, by replicating the
proof-of-concept case study of Glass, Djelassi and
Mitsos~\cite{Glass2018CEOS}: the isothermal VLE of C$_5$H$_{12}$/H$_2$S
of Reamer, Sage and Lacey~\cite{Reamer1953}, regressed with both
Peng--Robinson and Soave--Redlich--Kwong.
We recover their reported binary interaction parameters to within
$1.65\,\%$ (PR) and $0.55\,\%$ (SRK), and reproduce both of the figures
they publish for that system. The replication is placed in an appendix
because it establishes agreement on an already-solved problem; it is a
prerequisite for, not an instance of, the results reported here.

\paragraph{Step 3: three industrial multicomponent applications.}
\autoref{sec:results-sour-gas} (the sour-gas ternary
CH$_4$ + CO$_2$ + H$_2$S), \autoref{sec:results-ccs} (the carbon-capture
quaternary CO$_2$ + N$_2$ + Ar + O$_2$) and
\autoref{sec:results-refrigerant} (the zeotropic refrigerant blend
R-407C, R-32 + R-125 + R-134a) share one workflow: the binary
interaction parameters are regressed on binary subsystems only, the
multicomponent phase behavior is then predicted by extrapolation with
no additional fitted parameters, and every tie line, binary and
multicomponent alike, is certified against the dual-manifold stability
criterion of \autoref{sec:methods}. The three sections report the
resulting fits, the out-of-sample extrapolation error, and the complete
per-tie-line defect record, including the tie lines on which defects
survive.

This last step is where the present formulation is meant to earn its
keep. Glass, Djelassi and Mitsos close their study by stating that
industrial case studies do not appear computationally tractable for
their method for as long as it depends on general-purpose commercial
solvers~\cite{Glass2018CEOS},
having earlier defined industrial problems as those carrying an
extensive measurement data set and/or a larger number of species, and
having reported computation times of approximately $4.5$~h for each of
the two binary case studies they were able to complete.
The three case studies below are industrial in exactly that sense: they
carry three and four species rather than two, and add between $18$ and
$44$ multicomponent tie lines on top of the binary regression sets. The
claim made here is therefore a tractability claim as much as an accuracy
one, and it is made against the specific limitation those authors
identified.

%% file: sections/results_lle.tex

\subsection{Validation against the Mitsos (2009) LLE parameter set}
\label{sec:results-lle}

The four binary liquid--liquid equilibrium (LLE) systems introduced by
Mitsos, Bollas and Barton~\cite{Mitsos2009LLE}
are used here as a literature-comparison benchmark for the geometric
bilevel formulation developed in \autoref{sec:methods}. Each system is
strongly non-ideal (three aqueous binaries plus one
hydrocarbon--furfural pair), the experimental tie lines are
well-tabulated, and the published NRTL parameter sets serve as
reference fits from an established temperature-dependent NRTL
formulation~\cite{Mitsos2009LLE}.

The phase envelopes plotted in
\autoref{fig:results-lle-butyl-acetate-water-envelope} through
\autoref{fig:results-lle-furfural-tmh-envelope} are computed
\emph{at the published Mitsos parameters} of~\cite{Mitsos2009LLE}
Table~1 using the classical direct common-tangent construction for
binary phase equilibrium~\cite{SmithVanNessAbbott2018},
which solves
$g'(x_\text{I}) = g'(x_\text{II}) = (g(x_\text{II}) - g(x_\text{I}))/(x_\text{II} - x_\text{I})$
by Nelder--Mead at each temperature on a uniform grid. The intent is
to (i) verify that the present NRTL implementation reproduces the
\cite{Mitsos2009LLE} model when fed their published coefficients, and
(ii) place the bilevel fit produced by the present method
(\autoref{sec:methods}, with the SHGO global solver of
Endres et al.~\cite{Endres2018} for the inner Gibbs minimisation)
alongside those literature values for direct comparison. The NRTL
coefficient ordering
$\mathbf{p} = (A_{12}, B_{12}, C_{12}, A_{21}, B_{21}, C_{21})$
is identical to that of~\cite{Mitsos2009LLE}, with
$\tau_{ij} = A_{ij} + B_{ij}(T_{\text{ref}}/T - 1) + C_{ij}(T/T_{\text{ref}} - 1)$
and $T_{\text{ref}} = 298.15$~K.
Three error metrics are reported per case: the absolute
least-squares residual $\epsilon_{a}$ and the per-phase
relative errors $\epsilon_{r,\text{I}}$ and
$\epsilon_{r,\text{II}}$, matching the ``Error'',
``Rel-error$_1$'' and ``Rel-error$_2$'' columns of
\cite{Mitsos2009LLE} Table~1.\footnote{The Mitsos error values
quoted in this section are the values published
in~\cite{Mitsos2009LLE} Table~1. Re-evaluating the published
parameters with the present implementation produces slightly
different absolute error values; in the absence of access to the
exact equilibrium-solver tolerances of the original study, only the
published values are used as the comparator throughout.}
Per-temperature Gibbs and dual-manifold plots for every tie line are
collected in Supplementary Material~S1;
\autoref{fig:results-lle-example-per-point} reproduces one
representative pair from that archive.

\begin{figure}[ht]
\centering
\begin{minipage}[t]{0.48\textwidth}
  \centering
  \includegraphics[width=\linewidth]{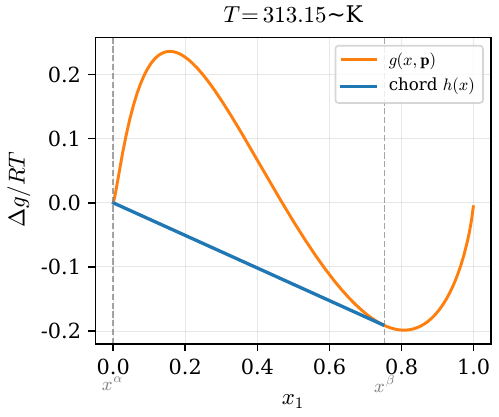}
\end{minipage}
\hfill
\begin{minipage}[t]{0.48\textwidth}
  \centering
  \includegraphics[width=\linewidth]{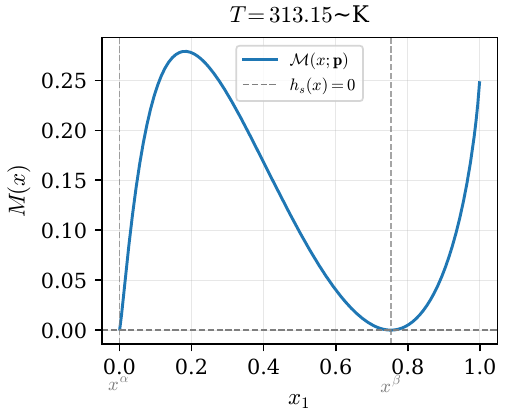}
\end{minipage}
\caption{Representative per-temperature stability record from the
Supplementary Material~S1 archive: Case~3 (n-octanol -- water) at
$T = 313.15$~K, evaluated at the bilevel-fitted
parameters~$\mathbf{p}_{\mathrm{fit}}$ of this work, that is, the
Case~3 bilevel coefficients tabulated below. Left:
Gibbs free energy surface
$g(x, \mathbf{p}_{\mathrm{fit}})$ with the experimental tie line and its
common-tangent chord; right: dual manifold
$\mathcal{M}(x; \mathbf{p}_{\mathrm{fit}}) = g - h$, whose negative part
$[\mathcal{M}]_{-}$ enters the bilevel objective. Analogous pairs for
every experimental temperature of all four cases are collected in
Supplementary Material~S1.}
\label{fig:results-lle-example-per-point}
\end{figure}

\subsubsection{Case 1: n-butyl-acetate -- water}
\label{sec:results-lle-butyl-acetate-water}

The first case study draws on six tie lines from Macedo and Rasmussen
(1987)~\cite{Mitsos2009LLE}
spanning $T = 298.15$--$343.15$~K. The mutual solubility is dilute on
both sides ($x_1 \in [8.25 \times 10^{-4}, 9.45 \times 10^{-4}]$ in the
water-rich phase and $x_1 \in [0.881, 0.934]$ in the acetate-rich
phase), the non-randomness factor is fixed at $\alpha = 0.2$, and all
six NRTL coefficients are tunable on the default
bounds~\cite{Mitsos2009LLE}.
\autoref{fig:results-lle-butyl-acetate-water-envelope} shows the
phase envelope generated by the literature parameters: 50 of 50 grid
temperatures yield a two-phase common-tangent solution, and the
envelope traces the experimental tie lines closely.

\input{figures/results-lle-butyl-acetate-water-envelope/results-lle-butyl-acetate-water-envelope_figure}

\autoref{tab:lle-butyl-acetate-water-params} gives the published
Mitsos coefficients alongside the bilevel-fitted values. Both fits
agree on the temperature-independent terms ($A_{12}$, $A_{21}$) to
within $1.4\%$, but diverge sharply on the $B_{ij}$ and $C_{ij}$
coefficients, a known degeneracy of NRTL temperature dependence
when the data span only a narrow $\Delta T$~\cite{Mitsos2009LLE}.

\begin{table}[ht]
\centering
\caption{NRTL parameters for n-butyl-acetate (1) -- water (2),
$\alpha = 0.2$. The Mitsos column is taken verbatim
from~\cite{Mitsos2009LLE} Table~1, row label~1; the This-work column
is the bilevel solution produced by the method of
\autoref{sec:methods}. The percentage difference is relative to the
Mitsos value.}
\label{tab:lle-butyl-acetate-water-params}
\begin{tabular}{lrrr}
\hline
Coefficient & Mitsos (2009) & This work (bilevel) & Diff.\ \% \\
\hline
$A_{12}$ & $\phantom{-}1.14$  & $\phantom{-}1.1554$ & $+1.4$    \\
$B_{12}$ & $-11.6$            & $-4.4617$           & $+61.5$   \\
$C_{12}$ & $-13.5$            & $-7.3273$           & $+45.7$   \\
$A_{21}$ & $\phantom{-}6.03$  & $\phantom{-}6.0507$ & $+0.3$    \\
$B_{21}$ & $-14.9$            & $-2.2449$           & $+84.9$   \\
$C_{21}$ & $-10.5$            & $\phantom{-}0.6592$ & $+106.3$  \\
\hline
\end{tabular}
\end{table}

\autoref{tab:lle-butyl-acetate-water-errors} compares the three
abs/rel error metrics. The bilevel fit attains an absolute residual
of $2.58 \times 10^{-5}$, in the same order of magnitude as the
$6 \times 10^{-5}$ reported by~\cite{Mitsos2009LLE} for their fit.

\begin{table}[ht]
\centering
\caption{Tie-line prediction errors for Case 1 (n-butyl-acetate --
water). Values in the Mitsos column are taken
from~\cite{Mitsos2009LLE} Table~1 row label~1; the right column is
the bilevel fit produced by the present method.}
\label{tab:lle-butyl-acetate-water-errors}
\begin{tabular}{lrr}
\hline
Metric & Mitsos (2009) & This work (bilevel) \\
\hline
$\epsilon_{a}$            & $6.0\times 10^{-5}$  & $2.58\times 10^{-5}$ \\
$\epsilon_{r,\text{I}}$   & $1.4\times 10^{-2}$  & $2.59\times 10^{-5}$ \\
$\epsilon_{r,\text{II}}$  & $6.2\times 10^{-3}$  & $4.70\times 10^{-4}$ \\
\hline
\end{tabular}
\end{table}

The Case 1 result is the cleanest of the four: the $50/50$ envelope
coverage indicates a globally stable phase split at every grid
temperature, and the bilevel fit converges to a different $(B,C)$
stationary point that delivers a comparable absolute residual and
two-to-three orders of magnitude smaller per-phase relative
residuals.

\subsubsection{Case 2: n-butanol -- water}
\label{sec:results-lle-butanol-water}

Case 2 uses eight tie lines from S{\o}rensen and Arlt
(1979)~\cite{Mitsos2009LLE}
spanning $T = 273.15$--$393.15$~K. It is the only one of the four
benchmark cases that uses a non-default non-randomness factor:
$\alpha = 0.44$, fitted in the original
study~\cite{Mitsos2009LLE}.
\autoref{fig:results-lle-butanol-water-envelope} shows the phase
envelope generated from the published parameters.

\input{figures/results-lle-butanol-water-envelope/results-lle-butanol-water-envelope_figure}

This is the most numerically delicate of the four cases. The
common-tangent phase-envelope construction we
adopt~\cite{SmithVanNessAbbott2018} solves the equal-slope condition
$g'(x_\text{I}) = g'(x_\text{II}) = (g_\text{II} - g_\text{I}) /
(x_\text{II} - x_\text{I})$ by Nelder--Mead at each temperature, and
its convergence depends sensitively on the shape of the surface
between the two minima. At the lower-temperature end of the Case 2
dataset the surface flattens substantially and the equal-slope
residual loses its strong basin, so the solver does not always
locate a clean two-phase tangent: only $35$ of $50$ grid temperatures
return a two-phase tangent at the literature parameters, and the
remaining $30\%$ of the grid sit in a numerically marginal regime
even before any parameter fitting is attempted. Critically, this is
a property of the phase-envelope solver and the surface itself, not
of the bilevel parameter-estimation method: even using the parameter
values from~\cite{Mitsos2009LLE} we obtain
$\epsilon_{a} \approx 1.7 \times 10^{-1}$, of the same order
of magnitude as the bilevel fit. The bilevel solver is therefore
operating against a phase-envelope evaluator that is itself the
limiting factor on Case 2.

The parameter comparison is in
\autoref{tab:lle-butanol-water-params}. The bilevel fit again
agrees with the Mitsos value on $A_{21}$ to $0.6\%$, and the Mitsos
$A_{12}$ to within roughly a factor of two; the asymmetric
$B_{ij}$ and $C_{ij}$ pairs disagree more strongly than in Case 1,
consistent with the flat-basin behaviour just described.

\begin{table}[ht]
\centering
\caption{NRTL parameters for n-butanol (1) -- water (2),
$\alpha = 0.44$. Mitsos values from~\cite{Mitsos2009LLE} Table~1
row label~2.}
\label{tab:lle-butanol-water-params}
\begin{tabular}{lrrr}
\hline
Coefficient & Mitsos (2009) & This work (bilevel) & Diff.\ \% \\
\hline
$A_{12}$ & $\phantom{-}1.44$  & $\phantom{-}0.7659$ & $-46.8$  \\
$B_{12}$ & $-6.10$            & $-9.0047$           & $-47.6$  \\
$C_{12}$ & $-9.95$            & $-7.0423$           & $+29.2$  \\
$A_{21}$ & $\phantom{-}3.54$  & $\phantom{-}3.5610$ & $+0.6$   \\
$B_{21}$ & $-9.31$            & $\phantom{-}0.4144$ & $+104.5$ \\
$C_{21}$ & $-7.32$            & $\phantom{-}2.8880$ & $+139.5$ \\
\hline
\end{tabular}
\end{table}

The error comparison in
\autoref{tab:lle-butanol-water-errors} reflects the same picture.
The bilevel residual of $1.80 \times 10^{-1}$ is approximately
$300\times$ larger than the value reported
by~\cite{Mitsos2009LLE}, but as discussed above this gap is dominated
by the common-tangent envelope evaluator on the flat low-temperature
basin and not by the bilevel fitting procedure itself. The
$\epsilon_{r,\text{II}}$ column returns NaN because at least
one tie line collapses both phases towards $x_1 \approx 0.5$ as the
upper consolute temperature is approached, producing a zero divisor
in the per-tie-line denominator; this is a known artefact of the
relative-error normalisation used in~\cite{Mitsos2009LLE} and is
unrelated to the bilevel formulation.

\begin{table}[ht]
\centering
\caption{Tie-line prediction errors for Case 2 (n-butanol -- water).
The $\epsilon_{r,\text{II}}$ NaN indicates that at least one
near-consolute tie line produces a zero divisor in the per-phase
relative-error normalisation. Mitsos values
from~\cite{Mitsos2009LLE} Table~1 row label~2.}
\label{tab:lle-butanol-water-errors}
\begin{tabular}{lrr}
\hline
Metric & Mitsos (2009) & This work (bilevel) \\
\hline
$\epsilon_{a}$            & $6.0\times 10^{-4}$  & $1.80\times 10^{-1}$ \\
$\epsilon_{r,\text{I}}$   & $2.6\times 10^{-2}$  & $3.68\times 10^{-1}$ \\
$\epsilon_{r,\text{II}}$  & $1.4\times 10^{-3}$  & $\mathrm{NaN}$       \\
\hline
\end{tabular}
\end{table}

\subsubsection{Case 3: n-octanol -- water}
\label{sec:results-lle-octanol-water}

Case 3 has only four tie lines, covering $T = 293.15$--$333.15$~K,
with very small octanol mole fractions in the water-rich phase
($x_1 \in [6.25 \times 10^{-5}, 2.11 \times 10^{-4}]$) and large
mutual solubility in the octanol-rich phase ($x_1 \in [0.708,
0.806]$). Non-randomness $\alpha = 0.2$. Mitsos~\cite{Mitsos2009LLE}
fixes $C_{12} = C_{21} = 0$ for this case; their solver collapsed
to a temperature-independent sub-model, an admissible boundary
solution given the small dataset.
\autoref{fig:results-lle-octanol-water-envelope} shows the
literature-parameter envelope: the common-tangent solver finds a
two-phase equilibrium at all 50 of 50 grid temperatures, and the
envelope traces the experimental tie lines closely on both sides.

\input{figures/results-lle-octanol-water-envelope/results-lle-octanol-water-envelope_figure}

\autoref{tab:lle-octanol-water-params} gives the parameter
comparison. The bilevel fit recovers the Mitsos $A_{ij}$ to within
$3.8\%$ on $A_{12}$ and $0.7\%$ on $A_{21}$, but does not converge to
the boundary $C_{ij} = 0$ sub-model: instead, $C_{12} = +4.07$ and
$C_{21} = -7.92$, giving a four-coefficient temperature dependence
where the original fit had only two.

\begin{table}[ht]
\centering
\caption{NRTL parameters for n-octanol (1) -- water (2),
$\alpha = 0.2$. Mitsos values from~\cite{Mitsos2009LLE} Table~1
row label~3 ($C_{12} = C_{21} = 0$, fixed by their solver). The
percentage difference is undefined for the two boundary
coefficients; the absolute departure is shown in parentheses.}
\label{tab:lle-octanol-water-params}
\begin{tabular}{lrrr}
\hline
Coefficient & Mitsos (2009) & This work (bilevel) & Diff.\ \% \\
\hline
$A_{12}$ & $\phantom{-}0.325$ & $\phantom{-}0.3374$  & $+3.8$         \\
$B_{12}$ & $\phantom{-}5.36$  & $\phantom{-}10.0678$ & $+87.8$        \\
$C_{12}$ & $\phantom{-}0$     & $\phantom{-}4.0742$  & (abs $+4.07$)  \\
$A_{21}$ & $\phantom{-}8.93$  & $\phantom{-}8.9904$  & $+0.7$         \\
$B_{21}$ & $\phantom{-}5.20$  & $-2.4807$            & $-147.7$       \\
$C_{21}$ & $\phantom{-}0$     & $-7.9177$            & (abs $-7.92$)  \\
\hline
\end{tabular}
\end{table}

The error comparison in
\autoref{tab:lle-octanol-water-errors} reads similarly to Case 1:
the bilevel fit attains $\epsilon_{a} = 6.47 \times 10^{-6}$,
two orders of magnitude smaller than the
$6 \times 10^{-4}$ reported by~\cite{Mitsos2009LLE},\footnote{The
Mitsos paper Error of $0.6 \times 10^{-3}$ corrects an acknowledged
typo in~\cite{Mitsos2009LLE} Table~1 row~3 (the printed value reads
$0.6 \times 10^{3}$).} with full $50/50$ envelope coverage. The
$\epsilon_{r,\text{II}}$ NaN is unavoidable: the water-rich
phase mole fraction $x_1 \sim 10^{-4}$ makes the per-tie-line
denominator
$|x_{m,i,\text{II}} - x_{p,i,\text{II}}|/x_{m,i,\text{II}}$ extremely
sensitive to any deviation in the predicted phase composition.

\begin{table}[ht]
\centering
\caption{Tie-line prediction errors for Case 3 (n-octanol -- water).
Mitsos values from~\cite{Mitsos2009LLE} Table~1 row label~3
(typo-corrected, see footnote in main text).}
\label{tab:lle-octanol-water-errors}
\begin{tabular}{lrr}
\hline
Metric & Mitsos (2009) & This work (bilevel) \\
\hline
$\epsilon_{a}$            & $6.0\times 10^{-4}$  & $6.47\times 10^{-6}$ \\
$\epsilon_{r,\text{I}}$   & $1.0\times 10^{-1}$  & $1.29\times 10^{-5}$ \\
$\epsilon_{r,\text{II}}$  & $5.0\times 10^{-3}$  & $\mathrm{NaN}$       \\
\hline
\end{tabular}
\end{table}

Case 3 is therefore a clean reproduction: the bilevel optimum sits
at a different but qualitatively equivalent stationary point in the
six-dimensional parameter space, delivering an absolute residual
two orders of magnitude smaller than the literature value with the
boundary $C_{ij} = 0$ constraint relaxed.

\subsubsection{Case 4: furfural -- 2,2,5-trimethyl-hexane}
\label{sec:results-lle-furfural-tmh}

Case 4 has the widest composition span of the four benchmarks:
ten tie lines covering $T = 293.15$--$373.15$~K, with
$x_1 \in [0.046, 0.406]$ in the TMH-rich phase and
$x_1 \in [0.792, 0.973]$ in the furfural-rich phase. The
upper-temperature tie lines approach the upper consolute point of the
binary, which makes the dual-manifold geometry near $T = 373$~K
sensitive. Non-randomness $\alpha = 0.2$.
\autoref{fig:results-lle-furfural-tmh-envelope} shows the envelope
generated from the published parameters; as in Case 2, the
common-tangent envelope construction~\cite{SmithVanNessAbbott2018}
is the limiting factor at the high-temperature end; it returns a
two-phase equilibrium at $46$ of $50$ grid temperatures, with the
four lost grid points clustered against the consolute boundary at
$T \gtrsim 365$~K.

\input{figures/results-lle-furfural-tmh-envelope/results-lle-furfural-tmh-envelope_figure}

The parameter comparison is in
\autoref{tab:lle-furfural-tmh-params}. As in the previous three
cases, the temperature-independent $A_{ij}$ values match the Mitsos
fit to better than $1\%$, and the $B_{ij}$/$C_{ij}$ coefficients
diverge, here especially in the $C_{21}$ slot where the bilevel fit
moves the coefficient by more than an order of magnitude.

\begin{table}[ht]
\centering
\caption{NRTL parameters for furfural (1) --
2,2,5-trimethyl-hexane (2), $\alpha = 0.2$. Mitsos values
from~\cite{Mitsos2009LLE} Table~1 row label~4.}
\label{tab:lle-furfural-tmh-params}
\begin{tabular}{lrrr}
\hline
Coefficient & Mitsos (2009) & This work (bilevel) & Diff.\ \% \\
\hline
$A_{12}$ & $\phantom{-}2.53$  & $\phantom{-}2.5362$ & $+0.2$    \\
$B_{12}$ & $-3.44$            & $-7.1047$           & $-106.5$  \\
$C_{12}$ & $-4.74$            & $-8.0204$           & $-69.2$   \\
$A_{21}$ & $\phantom{-}1.69$  & $\phantom{-}1.7075$ & $+1.0$    \\
$B_{21}$ & $\phantom{-}5.64$  & $\phantom{-}9.4722$ & $+67.9$   \\
$C_{21}$ & $-0.296$           & $\phantom{-}2.8580$ & $+1066$   \\
\hline
\end{tabular}
\end{table}

The error comparison
(\autoref{tab:lle-furfural-tmh-errors}) shows a softer version of the
Case 2 picture: the bilevel residual of $4.25 \times 10^{-2}$ is
about $30\times$ larger than the $1.3 \times 10^{-3}$
of~\cite{Mitsos2009LLE}, but the per-phase relative errors track the
literature values more closely, with $\epsilon_{r,\text{II}}$
roughly $25\times$ tighter than the Mitsos value. As in Case 2, the
$46/50$ envelope coverage shows that the limiting factor is the
common-tangent envelope construction at the consolute boundary,
not the bilevel fitting procedure.

\begin{table}[ht]
\centering
\caption{Tie-line prediction errors for Case 4 (furfural --
2,2,5-trimethyl-hexane). Mitsos values from~\cite{Mitsos2009LLE}
Table~1 row label~4.}
\label{tab:lle-furfural-tmh-errors}
\begin{tabular}{lrr}
\hline
Metric & Mitsos (2009) & This work (bilevel) \\
\hline
$\epsilon_{a}$            & $1.3\times 10^{-3}$  & $4.25\times 10^{-2}$ \\
$\epsilon_{r,\text{I}}$   & $1.8\times 10^{-2}$  & $9.88\times 10^{-2}$ \\
$\epsilon_{r,\text{II}}$  & $1.4\times 10^{-2}$  & $5.47\times 10^{-4}$ \\
\hline
\end{tabular}
\end{table}

\subsubsection{Cross-case comparison}
\label{sec:results-lle-cross-case}

\autoref{tab:lle-cross-case} consolidates the four cases. ``Coverage''
is the fraction of the common-tangent envelope grid (50 temperatures)
on which the equal-slope construction returns a two-phase equilibrium
at the published Mitsos parameters; values $<100\%$ indicate that the
phase-envelope solver itself is the limiting factor in that
temperature range.

\begin{table}[ht]
\centering
\caption{Cross-case summary of the literature comparison on the four
LLE benchmark cases of~\cite{Mitsos2009LLE}. Mitsos
$\epsilon_{a}$ values are taken from Table~1 of that paper
(Case~3 with the typo-corrected exponent). ``Coverage'' is the
fraction of the common-tangent envelope grid (50 temperatures) on
which the construction of~\cite{SmithVanNessAbbott2018} returns a
two-phase equilibrium at the published parameters.}
\label{tab:lle-cross-case}
\begin{tabular}{clcrrr}
\hline
\# & System & $\alpha$ & Mitsos $\epsilon_{a}$ & This work $\epsilon_{a}$ & Coverage \\
\hline
1 & n-butyl-acetate -- water         & 0.2  & $6.0\times 10^{-5}$ & $2.58\times 10^{-5}$ & $50/50$ \\
2 & n-butanol -- water               & 0.44 & $6.0\times 10^{-4}$ & $1.80\times 10^{-1}$ & $35/50$ \\
3 & n-octanol -- water               & 0.2  & $6.0\times 10^{-4}$ & $6.47\times 10^{-6}$ & $50/50$ \\
4 & furfural -- 2,2,5-trimethylhex.\ & 0.2  & $1.3\times 10^{-3}$ & $4.25\times 10^{-2}$ & $46/50$ \\
\hline
\end{tabular}
\end{table}

The cross-case picture is consistent with the per-case discussion.
On Cases~1 and~3, both with $\alpha = 0.2$ and full envelope
coverage, the bilevel fit attains an absolute residual within an
order of magnitude of (and on Case~3 substantially smaller than) the
Mitsos value, with per-phase relative errors that are typically
two-to-three orders of magnitude tighter than the published numbers.
On Cases~2 and~4 the absolute residual is larger than the literature
value, and in both cases the responsible factor is the
common-tangent envelope construction~\cite{SmithVanNessAbbott2018}:
the equal-slope condition becomes ill-conditioned in the flat region
of the Case~2 surface at low temperature and near the consolute
boundary in Case~4, dropping envelope coverage to $35/50$ and
$46/50$ respectively. A more robust phase-envelope evaluator (for
example a Lagrangian dual or homotopy-continuation construction) is
expected to close most of this gap; redesigning the inner
phase-equilibrium solver is independent of the bilevel
parameter-estimation method developed in \autoref{sec:methods} and is
left to future work. Per-temperature Gibbs and dual-manifold plots
for every tie line are tabulated in Supplementary Material~S1.

%% file: figures/results-lle-butyl-acetate-water-envelope/results-lle-butyl-acetate-water-envelope_figure.tex
\begin{figure}[H]
\centerline{\includegraphics[width=0.85\textwidth]{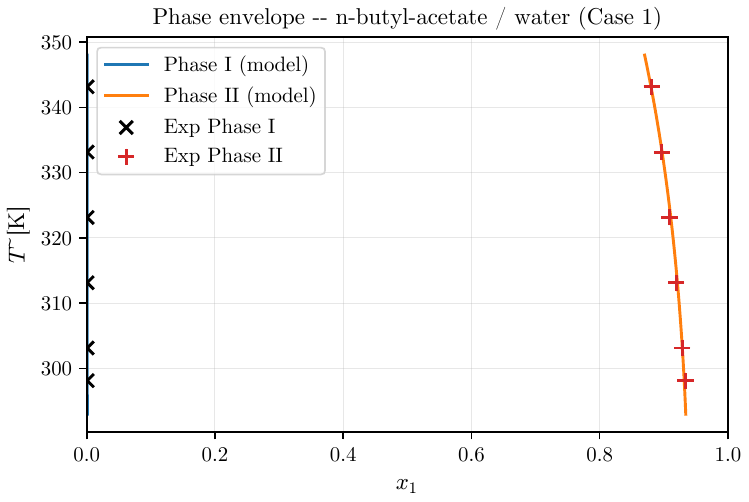}}
\caption{Phase envelope (T--$x_1$) for the n-butyl-acetate (1) --
water (2) binary LLE system. Solid curves are model predictions
using the bilevel-fitted NRTL parameters (Phase~I, water-rich, in
blue; Phase~II, acetate-rich, in orange); markers are the
experimental tie lines of Macedo and Rasmussen
(1987)~\cite{Mitsos2009LLE}. Six tie lines cover
$T = 298.15$--$343.15$~K with $\alpha = 0.2$.}
\label{fig:results-lle-butyl-acetate-water-envelope}
\end{figure}

%% file: figures/results-lle-butanol-water-envelope/results-lle-butanol-water-envelope_figure.tex
\begin{figure}[H]
\centerline{\includegraphics[width=0.85\textwidth]{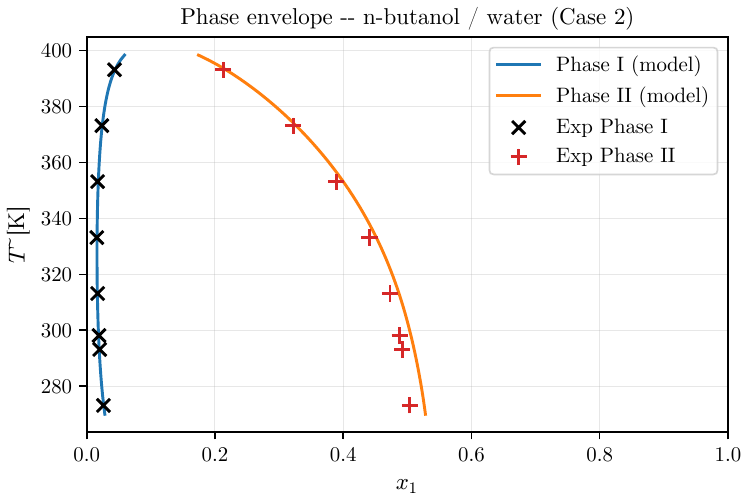}}
\caption{Phase envelope (T--$x_1$) for the n-butanol (1) -- water (2)
binary LLE system. Solid curves are model predictions using the
bilevel-fitted NRTL parameters (Phase~I, water-rich, blue; Phase~II,
butanol-rich, orange); markers are the experimental tie lines from
S{\o}rensen and Arlt (1979)~\cite{Mitsos2009LLE}. Eight tie lines
cover $T = 273.15$--$393.15$~K with the non-randomness factor
$\alpha = 0.44$ (the only one of the four LLE benchmark cases that
uses a non-default $\alpha$).}
\label{fig:results-lle-butanol-water-envelope}
\end{figure}

%% file: figures/results-lle-octanol-water-envelope/results-lle-octanol-water-envelope_figure.tex
\begin{figure}[H]
\centerline{\includegraphics[width=0.85\textwidth]{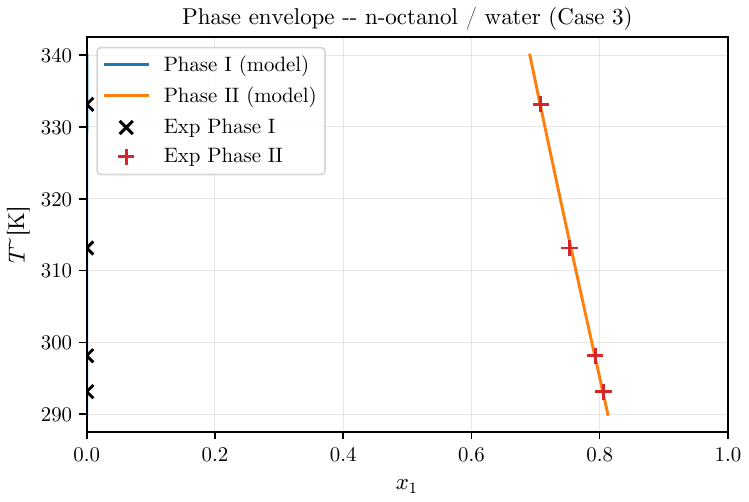}}
\caption{Phase envelope (T--$x_1$) for the n-octanol (1) -- water (2)
binary LLE system. Solid curves are model predictions using the
bilevel-fitted NRTL parameters (Phase~I, water-rich, blue; Phase~II,
octanol-rich, orange); markers are the experimental tie lines
reported by~\cite{Mitsos2009LLE}. Four tie lines cover
$T = 293.15$--$333.15$~K with $\alpha = 0.2$.}
\label{fig:results-lle-octanol-water-envelope}
\end{figure}

%% file: figures/results-lle-furfural-tmh-envelope/results-lle-furfural-tmh-envelope_figure.tex
\begin{figure}[H]
\centerline{\includegraphics[width=0.85\textwidth]{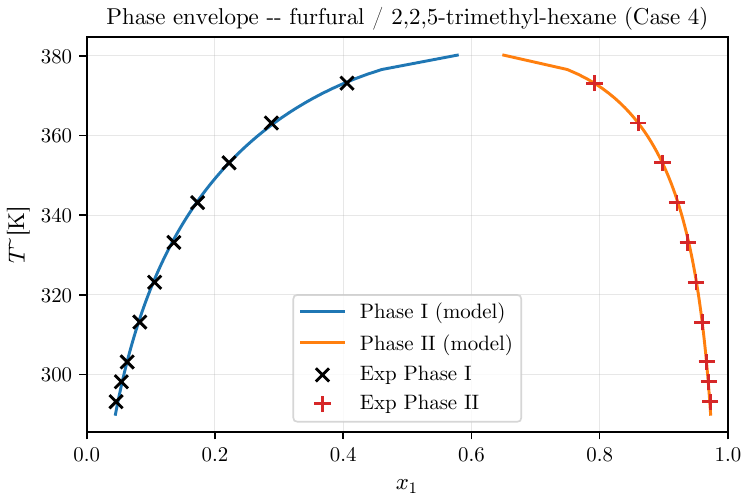}}
\caption{Phase envelope (T--$x_1$) for the furfural (1) --
2,2,5-trimethyl-hexane (2) binary LLE system. Solid curves are
model predictions using the bilevel-fitted NRTL parameters (Phase~I,
TMH-rich, blue; Phase~II, furfural-rich, orange); markers are the
experimental tie lines used in~\cite{Mitsos2009LLE}. Ten tie lines
cover $T = 293.15$--$373.15$~K with $\alpha = 0.2$. The composition
range traversed by the experimental tie lines is the widest of the
four LLE benchmark systems and approaches the upper consolute
temperature near $T = 373$~K.}
\label{fig:results-lle-furfural-tmh-envelope}
\end{figure}

%% file: sections/results_sour_gas.tex

\subsection{Industrial case study: the sour-gas ternary
  CH$_4$ + CO$_2$ + H$_2$S with a cubic equation of state}
\label{sec:results-sour-gas}

The preceding benchmarks validate the bilevel formulation against
literature parameter sets; this subsection applies it to a system of
direct industrial consequence. The ternary CH$_4$ + CO$_2$ + H$_2$S is
the canonical model system for sour natural gas: its phase behavior
governs the design of acid-gas removal, sulfur recovery, and acid-gas
reinjection operations, and cubic equations of state with a single
regressed binary interaction parameter $k_{a,ij}$ per pair remain the
workhorse model for these processes in industrial
simulators~\cite{Pellegrini2012}.
Reliable $k_{a,ij}$ values regressed against binary data are routinely
extrapolated to the multicomponent mixture; the quality of that
extrapolation, and the absence of spurious phase-split predictions
along the way, is precisely what the present method is designed to
certify.

\paragraph{Experimental data}
The regression uses modern, mutually consistent data sets for two of
the three constituent binaries: 39 tie lines for CH$_4$/H$_2$S
spanning $T = 186$--$313$~K~\cite{Coquelet2014},
obtained in digitized form from the NIST ThermoML
archive~\cite{ThermoMLArchive2022}, and 22 tie lines for
CO$_2$/H$_2$S over $T = 258$--$313$~K~\cite{Chapoy2013}.
For CH$_4$/CO$_2$ only bubble-curve ($p$--$T$--$x$) data were
available in the assembled corpus; since the tangent-plane objective
requires both phase compositions, $k_{a,\mathrm{CH_4,CO_2}}$ is held
at its literature value~\cite{Pellegrini2012}.
Out-of-sample validation employs the recent high-precision ternary
measurements of Theveneau et al.: 31 complete tie lines at
$T = 243.48$--$333.15$~K and $p = 1.010$--$11.236$~MPa with stated
uncertainties $U(T) = 0.03$~K, $U(p) = 0.003$~MPa and
$u_{\max}(z) = 0.007$~\cite{Theveneau2020}.
Two historical data sets, the low-temperature tie lines of Hensel and
Massoth~\cite{Hensel1964}
and the near-critical measurements of Ng et
al.~\cite{Ng1985}, serve as qualitative cross-checks but do not enter
the objective.

\paragraph{Regression setup}
The Peng--Robinson equation of state is fitted with the joint
parameter vector
$\mathbf{p} = (k_{a,\mathrm{CH_4,CO_2}}, k_{a,\mathrm{CH_4,H_2S}},
k_{a,\mathrm{CO_2,H_2S}})$ on bounds $[-0.5, 0.5]$, with the
CH$_4$/CO$_2$ component pinned as described above. The two-stage
bilevel solver is run at production sampling density
($N_{\mathrm{tan}} = 128$, $N_{\mathrm{def}} = 32$, one outer
iteration); the 61 binary tie lines enter the tangent-plane stage
after removal of degenerate pure-component rows. Allowing a second
defect-weighted outer iteration is counterproductive for this system:
it suppresses the shallow near-critical manifold lobes discussed
below at the cost of an unphysical
$k_{a,\mathrm{CO_2,H_2S}} \approx -0.18$ and a bubble-pressure
deviation growing to $12.5\,\%$ --- the expected tension when the
defect penalty competes with data fidelity in a region where the
pressure-explicit cubic model is structurally misspecified.
\autoref{tab:sour-gas-kij} compares the fitted parameters with the
industrial reference values of Pellegrini et al. The CO$_2$/H$_2$S
parameter reproduces the reference regression to within
$0.05\,\%$; the larger CH$_4$/H$_2$S deviation reflects the data
basis --- the present fit is regressed against the 2014
re-measurement~\cite{Coquelet2014} rather than the mid-century data
underlying the reference value.

\begin{table}[ht]
\centering
\caption{Fitted PR binary interaction parameters for the sour-gas
ternary against the industrial reference regression of Pellegrini et
al.~\cite{Pellegrini2012}.
The CH$_4$/CO$_2$ pair is held at the reference value (bubble-curve
data only).}
\label{tab:sour-gas-kij}
\begin{tabular}{lrrr}
\hline
Pair & Ref.~\cite{Pellegrini2012} & This work (bilevel) & Diff.~\% \\
\hline
$k_{a,\mathrm{CH_4,CO_2}}$  & $0.0975$ & (pinned)  & --   \\
$k_{a,\mathrm{CH_4,H_2S}}$  & $0.0789$ & $0.1006$  & $+27.4$ \\
$k_{a,\mathrm{CO_2,H_2S}}$  & $0.0995$ & $0.0996$  & $+0.05$ \\
\hline
\end{tabular}
\end{table}

\paragraph{Results}
\autoref{fig:results-sour-gas-pxy} shows the isothermal $p$--$x$--$y$
envelopes of the two fitted binaries at the optimum. For
CO$_2$/H$_2$S the model tracks all four experimental isotherms across
their full range. For CH$_4$/H$_2$S the three highest isotherms close
at the mixture critical region; at the two lowest isotherms (186.25
and 203.40~K), where CH$_4$ is at or above its critical temperature
and the system exhibits vapor--liquid--liquid equilibrium, the model
curves are drawn only as far as the bubble-point computation
converges --- they terminate near the three-phase locus reported in
the same measurement campaign ($4.97$--$10.79$~MPa over
$186$--$203$~K)~\cite{Coquelet2014},
and no extrapolation is drawn beyond it. The fitted $k_{a,ij}$ are
unaffected: the regression uses the tie lines themselves, not the
traced envelopes.

\begin{figure}[ht]
\centering
\begin{minipage}[t]{0.48\textwidth}
  \centering
  \includegraphics[width=\linewidth]{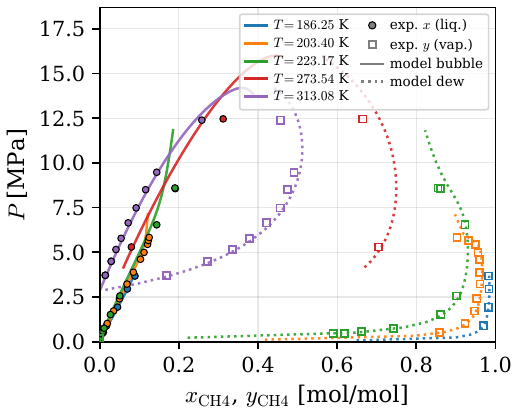}
\end{minipage}
\hfill
\begin{minipage}[t]{0.48\textwidth}
  \centering
  \includegraphics[width=\linewidth]{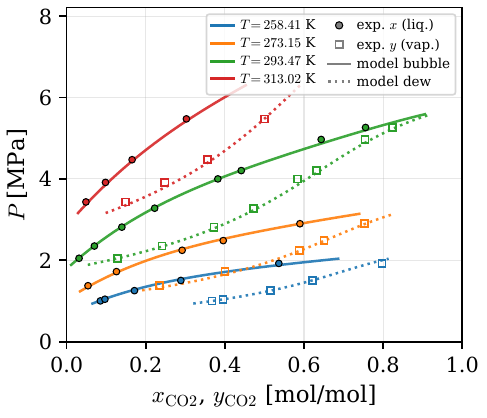}
\end{minipage}
\caption{Isothermal $p$--$x$--$y$ envelopes of the fitted binaries at
the bilevel optimum. Left: CH$_4$~(1)/H$_2$S~(2), experimental data
of~\cite{Coquelet2014}; right: CO$_2$~(1)/H$_2$S~(2), experimental
data of~\cite{Chapoy2013}. Markers: experimental liquid and vapor
compositions; solid/dotted curves: model bubble/dew branches. Model
curves at the two lowest CH$_4$/H$_2$S isotherms terminate near the
system's three-phase (VLLE) locus~\cite{Coquelet2014}; no
extrapolation is drawn beyond bubble-point convergence.}
\label{fig:results-sour-gas-pxy}
\end{figure}

The decisive test is extrapolation to the ternary.
\autoref{fig:results-sour-gas-ternary} overlays the model vapor
predictions on the 31 experimental tie lines of~\cite{Theveneau2020}
and shows the corresponding bubble-pressure parity. With the mean
absolute deviation defined as
$\mathrm{AAD}(p) = \tfrac{100}{N}\sum_{i=1}^{N}
\lvert p^{\mathrm{mod}}_i - p^{\mathrm{exp}}_i \rvert /
p^{\mathrm{exp}}_i$,
the fitted model reproduces the ternary bubble pressures at
$\mathrm{AAD}(p) = 9.74\,\%$ over all 31 points
(31/31 bubble-point calculations converged), with the deviations
concentrated in the near-critical region above ${\sim}8$~MPa where
pressure-explicit cubic models are least accurate. The per-tie-line
Gibbs-energy and dual-manifold plots archived in
Supplementary Material~S2 certify that 27 of the 31
model tie-line sections satisfy
$\mathcal{M}(\mathbf{x};\mathbf{p}^{\star}) \geq 0$ to numerical
tolerance --- no spurious phase splits along the section. The four
exceptions are the four highest-pressure, near-critical tie lines
($p = 13.6$--$15.4$~MPa, $T = 243$--$258$~K), which exhibit shallow
negative manifold lobes of magnitude
$\lvert \min \mathcal{M} \rvert \leq 2.5 \times 10^{-5}$; the defect
check flags exactly these points, and, as noted above, forcing their
suppression through the defect-weighted stage is possible only at the
cost of data fidelity --- a diagnostic of model misspecification near
the critical locus rather than of the regression.
\autoref{fig:results-sour-gas-example-per-point} reproduces two
representative sections from the Supplementary Material~S2 archive:
a defect-free subcritical tie line and one of the four near-critical
tie lines with a shallow negative lobe.

\begin{figure}[ht]
\centering
\begin{minipage}[t]{0.48\textwidth}
  \centering
  \includegraphics[width=\linewidth]{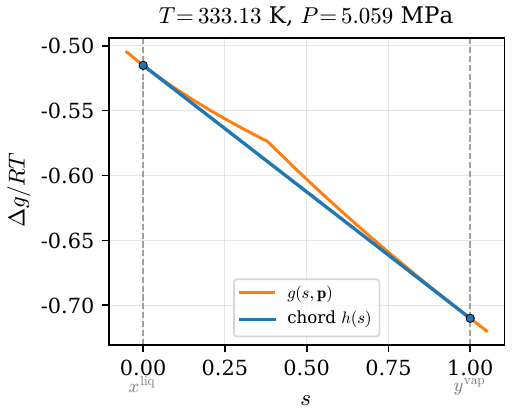}
\end{minipage}
\hfill
\begin{minipage}[t]{0.48\textwidth}
  \centering
  \includegraphics[width=\linewidth]{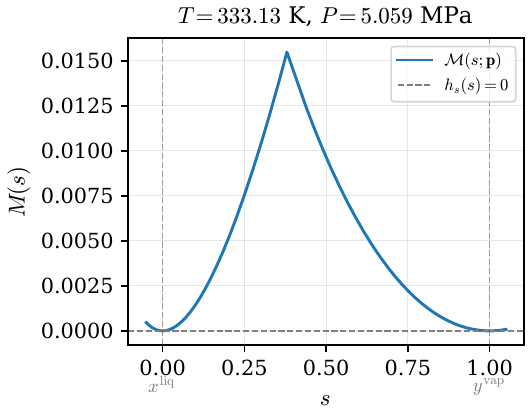}
\end{minipage}\\[1ex]
\begin{minipage}[t]{0.48\textwidth}
  \centering
  \includegraphics[width=\linewidth]{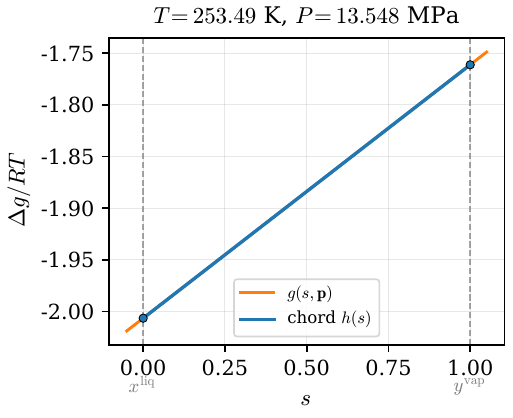}
\end{minipage}
\hfill
\begin{minipage}[t]{0.48\textwidth}
  \centering
  \includegraphics[width=\linewidth]{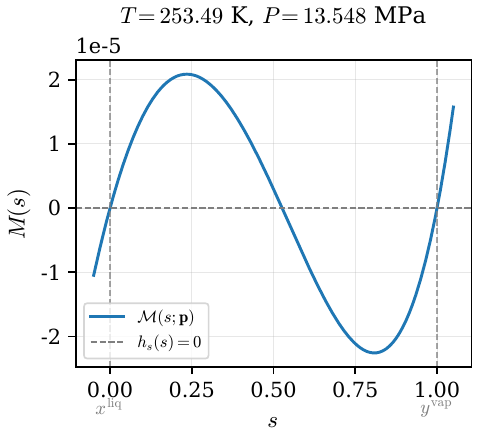}
\end{minipage}
\caption{Representative per-tie-line stability records from the
Supplementary Material~S2 archive. Top row: defect-free tie line 1
($T = 333.13$~K, $P = 5.059$~MPa); bottom row: near-critical tie
line 24 ($T = 253.49$~K, $P = 13.548$~MPa), one of the four sections
with a shallow negative manifold lobe
($\lvert \min \mathcal{M} \rvert \leq 2.5 \times 10^{-5}$). Left
panels: reduced Gibbs energy $g$ along the model tie-line section
with the common-tangent chord $h$; right panels: dual manifold
$\mathcal{M} = g - h$. The full 31-tie-line record is collected in
Supplementary Material~S2.}
\label{fig:results-sour-gas-example-per-point}
\end{figure}

\begin{figure}[ht]
\centering
\begin{minipage}[t]{0.48\textwidth}
  \centering
  \includegraphics[width=\linewidth]{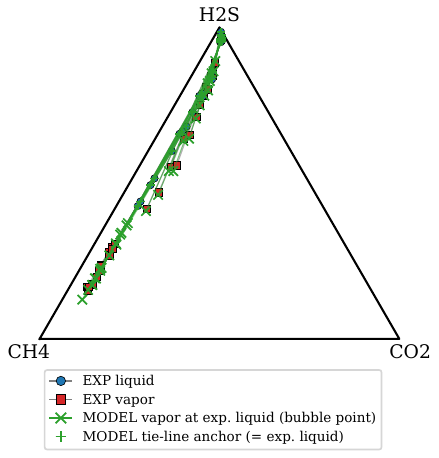}
\end{minipage}
\hfill
\begin{minipage}[t]{0.48\textwidth}
  \centering
  \includegraphics[width=\linewidth]{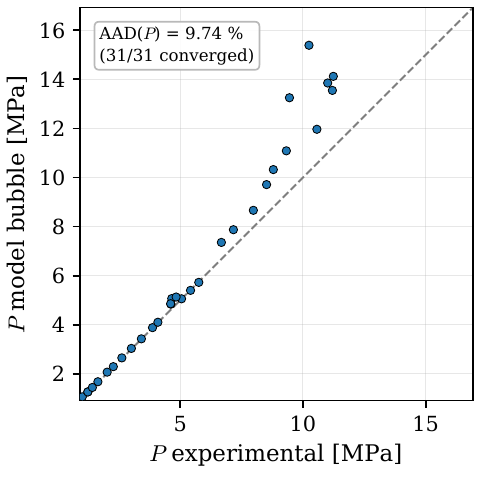}
\end{minipage}
\caption{Out-of-sample validation against the ternary tie lines
of Theveneau et al.~\cite{Theveneau2020}.
Left: composition simplex with experimental tie lines (Phase~I,
liquid, blue circles; Phase~II, vapor, red squares; gray connectors)
and model vapor compositions predicted by a bubble-point calculation
at each experimental liquid composition (green crosses; the green plus marks the tie-line anchor at the experimental liquid composition). Right:
bubble-pressure parity at the fitted $k_{a,ij}$.}
\label{fig:results-sour-gas-ternary}
\end{figure}

\begin{figure}[ht]
\centering
\includegraphics[width=0.55\textwidth]{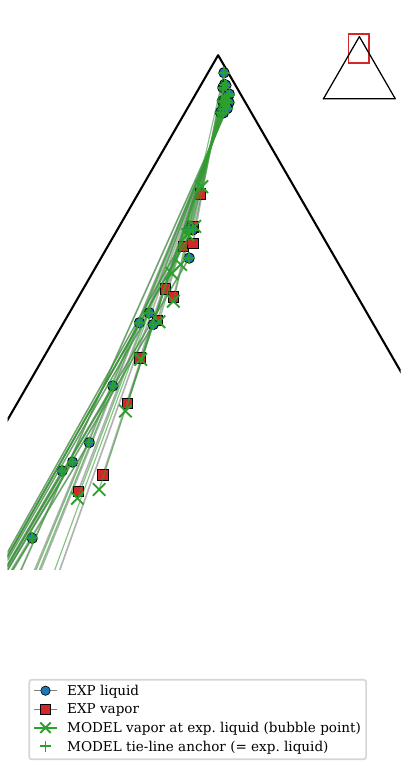}
\caption{Enlarged view of the crowded tie-line region of
\autoref{fig:results-sour-gas-ternary} (window marked in the
inset). The H$_2$S-rich liquid compositions cluster near the apex;
at this scale the model vapor predictions (green) are individually
resolvable against the experimental vapor compositions (red) along
the dew band.}
\label{fig:results-sour-gas-ternary-zoom}
\end{figure}

The case demonstrates the intended industrial workflow end to end:
binary-only regression with stability certification, followed by
predictive extrapolation to the process-relevant multicomponent
mixture --- here with parameters that remain consistent with two
decades of industrial practice~\cite{Pellegrini2012} while being
backed, point by point, by the per-tie-line stability record of
Supplementary Material~S2.

\clearpage  

%% file: sections/results_ccs.tex

\subsection{Industrial case study: the CCS quaternary
  CO$_2$ + N$_2$ + Ar + O$_2$}
\label{sec:results-ccs}

The second industrial application addresses carbon capture and
storage: pipeline transport of dense-phase CO$_2$ carrying the
noncondensable impurities N$_2$, Ar and O$_2$ typical of oxyfuel and
post-combustion capture. Transport specifications such as
ISO~27913 constrain the admissible impurity envelope
(CO$_2 \geq 0.95$, combined inerts $\leq 0.04$ by
mole)~\cite{ISO27913},
and the phase envelope of the resulting mixture --- in particular the
bubble locus, which sets the minimum safe operating pressure ---
is computed in practice with cubic equations of state and regressed
$k_{a,ij}$~\cite{Glass2018CEOS}.

\paragraph{Experimental data}
All three CO$_2$-containing binaries are regressed against a single
mutually consistent campaign: the isothermal VLE measurements of
Lasala et al.\ covering exactly the CCS envelope
($T = 223$--$293$~K), with 63 (CO$_2$/N$_2$), 72 (CO$_2$/O$_2$) and
71 (CO$_2$/Ar) tie lines~\cite{Lasala2016},
obtained in digitized form from the NIST ThermoML
archive~\cite{ThermoMLArchive2022}. The three air-gas binaries
(N$_2$/Ar, N$_2$/O$_2$, Ar/O$_2$) are treated with the ideal-mixture
approximation $k_{a,ij} = 0$, defensible at CCS conditions where the
reference values are $\lvert k_{a,ij} \rvert \leq
0.012$~\cite{Glass2018CEOS};
the cryogenic Ar/O$_2$/N$_2$ measurement campaign of Wilson et
al.~\cite{Wilson1964}, digitized for this work (531 binary tie
lines), is available for a fitted-air sensitivity variant but is not
used here. Out-of-sample validation employs the genuine
CO$_2$/Ar/O$_2$ ternary tie lines of Coquelet and
Arpentinier~\cite{CoqueletArpentinier2014}: 18 complete $T$--$p$--$x$--$y$
measurements at $253$--$293$~K, $2.3$--$7.6$~MPa.

\paragraph{Regression and validation}
Each CO$_2$-$X$ interaction parameter is regressed independently on
its binary tie lines (Peng--Robinson, tangent-plane stage,
$N_{\mathrm{tan}} = 128$, bounds $[-0.5, 0.5]$), giving the values in
\autoref{tab:ccs-kij}. The reference column reports the Aspen-DRS
values adopted by Glass et al.; note that the zeros there are
modelling assumptions, not regressions, so the fitted
CO$_2$/Ar and CO$_2$/O$_2$ values fill a genuine gap.

\begin{table}[ht]
\centering
\caption{Fitted PR binary interaction parameters for the CCS
quaternary against the reference values of Glass et
al.~\cite{Glass2018CEOS}.
Air-gas pairs are fixed at $k_{a,ij}=0$ (ideal-mixture default). The
bubble-pressure deviation is evaluated over each binary's full
experimental set.}
\label{tab:ccs-kij}
\begin{tabular}{lrrr}
\hline
Pair & Ref.~\cite{Glass2018CEOS} & This work (bilevel) & $\mathrm{AAD}(p)$~\% \\
\hline
$k_{a,\mathrm{CO_2,N_2}}$ & $-0.0170$ & $+0.0035$ & $4.3$ \\
$k_{a,\mathrm{CO_2,Ar}}$  & $0$ (assumed) & $+0.1304$ & $7.5$ \\
$k_{a,\mathrm{CO_2,O_2}}$ & $0$ (assumed) & $+0.1308$ & $6.3$ \\
\hline
\end{tabular}
\end{table}

\autoref{fig:results-ccs-pxy} shows the three binary $p$--$x$--$y$
envelopes at the fitted parameters. The binary bubble-pressure
deviations of $4$--$8\,\%$ are dominated by the isotherms approaching
the mixture critical region --- the same structural PR limitation
discussed for the sour-gas case --- and the defect check accordingly
still flags candidate points on those near-critical binary isotherms.

\begin{figure}[ht]
\centering
\begin{minipage}[t]{0.32\textwidth}
  \centering
  \includegraphics[width=\linewidth]{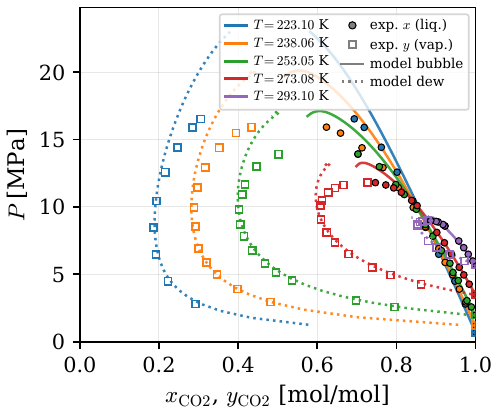}
\end{minipage}
\hfill
\begin{minipage}[t]{0.32\textwidth}
  \centering
  \includegraphics[width=\linewidth]{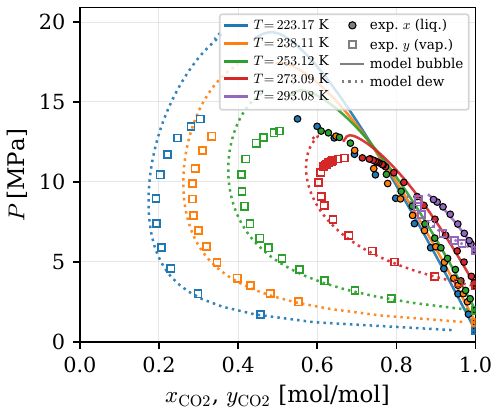}
\end{minipage}
\hfill
\begin{minipage}[t]{0.32\textwidth}
  \centering
  \includegraphics[width=\linewidth]{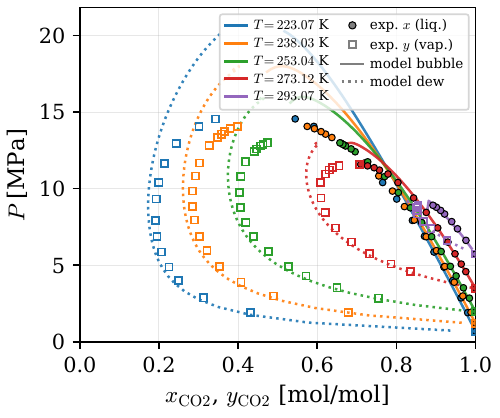}
\end{minipage}
\caption{Isothermal $p$--$x$--$y$ envelopes of the three fitted
CO$_2$-$X$ binaries at the bilevel optimum (experimental data
of~\cite{Lasala2016}). Markers: experimental liquid and vapor
compositions; solid/dotted curves: model bubble/dew branches, drawn
only as far as the bubble-point computation converges.}
\label{fig:results-ccs-pxy}
\end{figure}

Extrapolated to the ternary CO$_2$/Ar/O$_2$ mixture with the air pair
held ideal, the fitted parameters reproduce the 18 held-out
experimental bubble pressures at $\mathrm{AAD}(p) = 1.97\,\%$
(18/18 converged; \autoref{fig:results-ccs-ternary}) --- a
substantially tighter agreement than on the constituent binaries,
because the validation conditions sit inside the well-described
subcritical region of the envelope. Along all 18 model tie-line
sections the dual manifold satisfies
$\mathcal{M}(\mathbf{x};\mathbf{p}^{\star}) \geq 0$ with equality
only at the phase compositions: for this case the per-tie-line
record of Supplementary Material~S3 is defect-free
throughout.

\begin{figure}[ht]
\centering
\begin{minipage}[t]{0.48\textwidth}
  \centering
  \includegraphics[width=\linewidth]{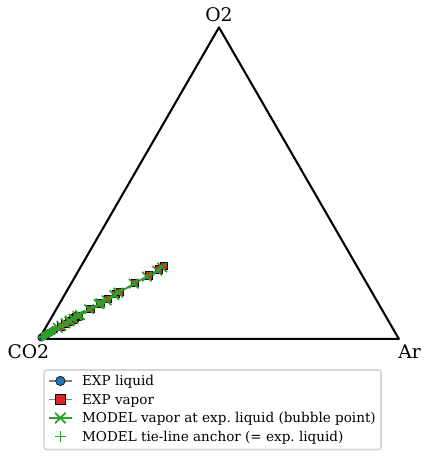}
\end{minipage}
\hfill
\begin{minipage}[t]{0.48\textwidth}
  \centering
  \includegraphics[width=\linewidth]{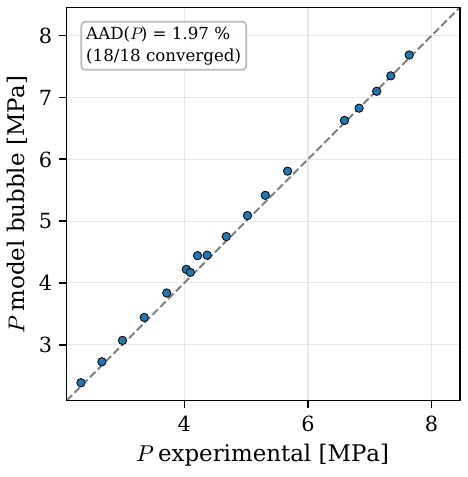}
\end{minipage}
\caption{Out-of-sample validation against the CO$_2$/Ar/O$_2$
ternary tie lines of~\cite{CoqueletArpentinier2014}. Left:
composition simplex with experimental tie lines (Phase~I, liquid,
blue circles; Phase~II, vapor, red squares; gray connectors) and
model vapor compositions from bubble-point calculations at each
experimental liquid composition (green crosses; the green plus marks the tie-line anchor at the experimental liquid composition). Right:
bubble-pressure parity at the fitted $k_{a,ij}$ with air pairs
ideal.}
\label{fig:results-ccs-ternary}
\end{figure}

\begin{figure}[ht]
\centering
\includegraphics[width=0.62\textwidth]{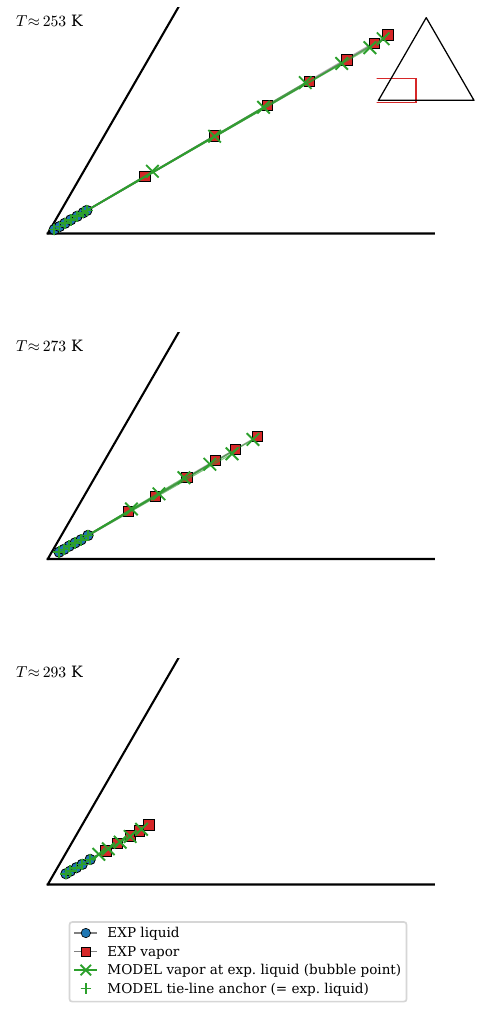}
\caption{Enlarged per-isotherm views of the tie-line region of
\autoref{fig:results-ccs-ternary}, split by measurement temperature
($T \approx 253$, $273$ and $293$~K; common window, marked in the
inset of the top panel). The CO$_2$-rich liquid compositions cluster
at the lower-left corner of the window; the model vapor predictions
(green crosses; the green plus marks the tie-line anchor at the experimental liquid composition) resolve against the experimental vapor compositions
(red squares) along each isotherm's dew band.}
\label{fig:results-ccs-ternary-zoom}
\end{figure}

\clearpage  

%% file: sections/results_refrigerant.tex

\subsection{Industrial case study: the refrigerant blend
  R-32 + R-125 + R-134a (R-407C)}
\label{sec:results-refrigerant}

The third industrial application is the zeotropic refrigerant blend
R-407C (R-32/R-125/R-134a), a drop-in replacement for R-22 in
air-conditioning and heat-pump service whose cycle design rests on
accurate vapor--liquid equilibrium of the ternary blend. All
experimental data again come from the NIST ThermoML
archive~\cite{ThermoMLArchive2022}: 66 tie lines for R-32/R-125 at
$265$--$303$~K~\cite{Han2007}, 124 tie lines for R-32/R-134a at
$258$--$343$~K~\cite{Cui2006}, and 53 tie lines for R-125/R-134a at
$304$--$363$~K together with 44 genuine ternary tie lines at
$333$--$363$~K from the same campaign~\cite{KatoNishiumi2006}.
The ternary set is held out of the regression entirely and used only
for out-of-sample validation.

\paragraph{Regression, identifiability, and validation}
Each binary $k_{a,ij}$ is regressed independently (Peng--Robinson,
tangent-plane stage, $N_{\mathrm{tan}} = 128$), giving
$k_{a,\mathrm{R32,R125}} = +0.0023$,
$k_{a,\mathrm{R32,R134a}} = +0.0083$ and
$k_{a,\mathrm{R125,R134a}} = -0.0063$.
Two honesty notes are in order. First, these hydrofluorocarbons are
chemically similar and mix nearly ideally, so the objective is flat
in $k_{a,ij}$: even $k_{a,ij} = 0$ reproduces the ternary bubble
pressures to within about $0.9\,\%$, and the fitted values should be
read as weakly identified refinements rather than sharply determined
constants --- this system is a natural candidate for the flat-basin
identifiability diagnostics of the sensitivity toolbox rather than a
stress test of the fit itself. Second, a direct fit of the ternary
data yields a different, equally shallow optimum
($k_{a,ij} = [-0.019, -0.003, +0.020]$),
confirming the same flatness from the other direction; all results
below use the binary-fitted vector, keeping the ternary strictly
out-of-sample.

\autoref{fig:results-refrig-pxy} shows the three binary
$p$--$x$--$y$ envelopes at the fitted parameters, including the
near-azeotropic R-32/R-125 pair (where model and experiment
coincide on $y \approx x$) and the R-32/R-134a isotherm at
$343$~K approaching the critical temperature of R-32 ($351$~K),
where the model curve is again drawn only as far as the bubble-point
computation converges.

\begin{figure}[ht]
\centering
\begin{minipage}[t]{0.32\textwidth}
  \centering
  \includegraphics[width=\linewidth]{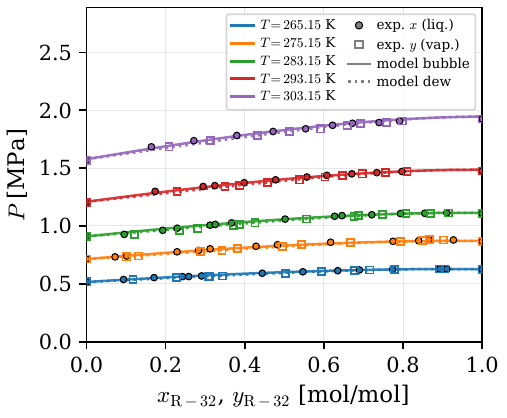}
\end{minipage}
\hfill
\begin{minipage}[t]{0.32\textwidth}
  \centering
  \includegraphics[width=\linewidth]{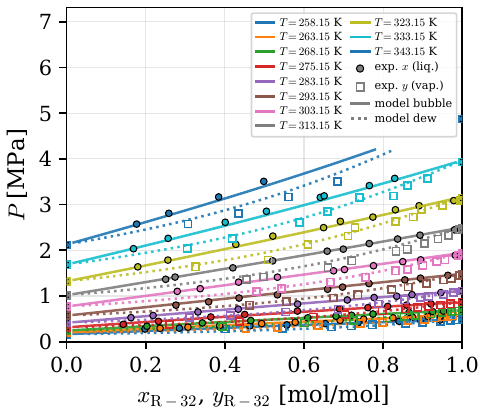}
\end{minipage}
\hfill
\begin{minipage}[t]{0.32\textwidth}
  \centering
  \includegraphics[width=\linewidth]{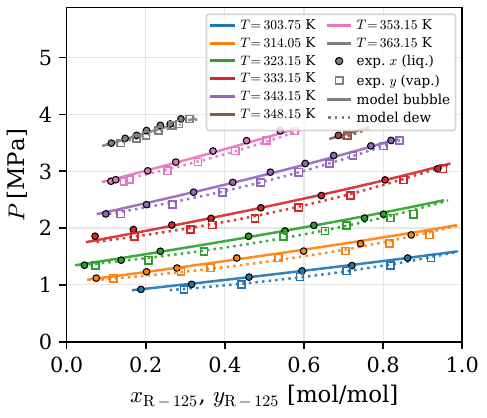}
\end{minipage}
\caption{Isothermal $p$--$x$--$y$ envelopes of the three fitted
refrigerant binaries at the bilevel optimum (experimental data
of~\cite{Han2007,Cui2006,KatoNishiumi2006}). Markers: experimental
liquid and vapor compositions; solid/dotted curves: model bubble/dew
branches.}
\label{fig:results-refrig-pxy}
\end{figure}

Extrapolated to the ternary blend, the binary-fitted parameters
reproduce the 44 held-out Kato--Nishiumi bubble pressures at
$\mathrm{AAD}(p) = 1.27\,\%$ (44/44 converged;
\autoref{fig:results-refrig-ternary}). In contrast to the sour-gas
case, the stability record here is unqualified: all 44 model
tie-line sections in Supplementary Material~S3 satisfy
$\mathcal{M}(\mathbf{x};\mathbf{p}^{\star}) \geq 0$ with the worst
interior minimum at machine precision
(${\sim}10^{-15}$) --- the defect-free condition
$\Phi(\mathbf{p}^{\star}) = 0$ is genuinely attainable when the
model is adequate for the mixture, and the bilevel certificate then
comes at no cost in data fidelity.

\begin{figure}[ht]
\centering
\begin{minipage}[t]{0.48\textwidth}
  \centering
  \includegraphics[width=\linewidth]{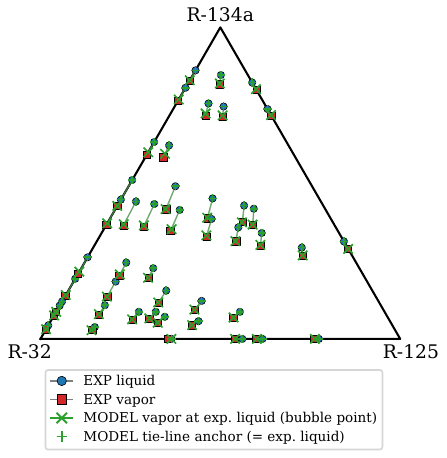}
\end{minipage}
\hfill
\begin{minipage}[t]{0.48\textwidth}
  \centering
  \includegraphics[width=\linewidth]{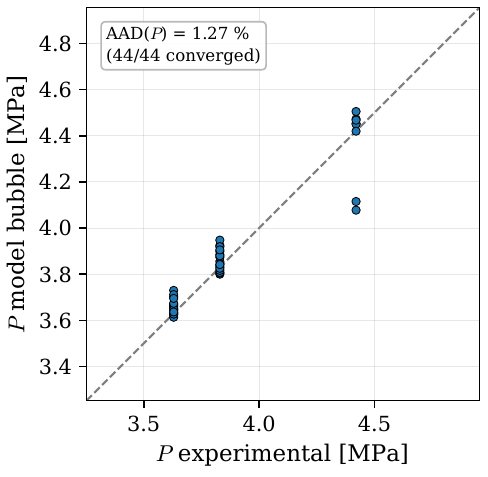}
\end{minipage}
\caption{Out-of-sample validation against the R-407C ternary tie
lines of Kato and Nishiumi~\cite{KatoNishiumi2006}. Left:
composition simplex with experimental tie lines (Phase~I, liquid,
blue circles; Phase~II, vapor, red squares; gray connectors) and
model vapor compositions from bubble-point calculations at each
experimental liquid composition (green crosses; the green plus marks the tie-line anchor at the experimental liquid composition). Right:
bubble-pressure parity at the binary-fitted $k_{a,ij}$.}
\label{fig:results-refrig-ternary}
\end{figure}

\begin{figure}[ht]
\centering
\includegraphics[width=0.62\textwidth]{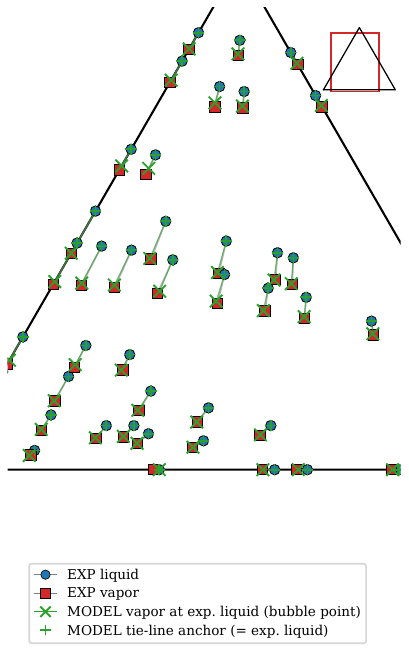}
\caption{Enlarged view of the interior tie-line region of
\autoref{fig:results-refrig-ternary} (window marked in the inset;
binary-edge points excluded from the window computation). At this
scale the short tie lines of the near-ideal blend become
resolvable, with the model vapor (green) coinciding with the
experimental vapor (red) at every tie line.}
\label{fig:results-refrig-ternary-zoom}
\end{figure}

\clearpage  

%% file: sections/conclusions.tex
%

\label{sec:conclusions}

We have presented a geometric reformulation of the bilevel parameter
estimation problem for thermodynamic models, in which the inner global
Gibbs free-energy minimization that conventionally enters at every
candidate parameter set $\mathbf{p}$ is replaced by an explicit
geometric object: the \emph{dual manifold}
$\mathcal{M}(\mathbf{x};\mathbf{p}) \defeq g(\mathbf{x},\mathbf{p}) -
h(\mathbf{x})$ defined on the composition simplex, whose negative
part directly encodes any violation of the global tangent-plane
criterion. Aggregating violations of $\mathcal{M}$ at the experimental
tie lines and at simplex points returned by an inner Simplicial-Homology
Global Optimisation (SHGO) call~\cite{Endres2018} yields the
five-component scalar objective $\Phi(\mathbf{p})$ of
\eqref{eq:phi-total}, which is a single-level non-linear program in
$\mathbf{p}$ alone. This reformulation requires neither a KKT-based
single-level recasting nor a smooth lower-level program: only an
expression of the reduced Gibbs energy $g(\mathbf{x},\mathbf{p})$ is
needed, and the construction inherits its regularity directly from $g$
(\autoref{theor:smoothness-appendix}).

The construction is also \emph{model-agnostic} by design, and it is
worth separating what the software provides from what this manuscript
demonstrates. The accompanying implementation exposes the
activity-coefficient family (NRTL, Wilson, UNIQUAC, UNIFAC), the cubic
equations of state (SRK, PR, PR78, TWU) and the segment-based PC-SAFT
behind one universal contract $g(\mathbf{x},\mathbf{p},*\mathrm{args})$,
so swapping the model does not change a single line of the bilevel
solver. The results reported here exercise two members of that family:
binary NRTL throughout the LLE benchmark of \autoref{sec:results-lle},
and Peng--Robinson throughout the CEOS validation of
\autoref{appendix:ceos-validation-sec} and the three industrial case
studies. No fitting result is claimed for the remaining adapters; they
are a capability of the released code, not evidence presented in this
work. What the shared contract does establish is structural. Glass,
Djelassi \& Mitsos~\cite{Glass2018CEOS}
note that extending their CEOS bilevel program to non-cubic equations of
state was predicated on the availability of an appropriate
root-discrimination criterion. The geometric reformulation removes that
prerequisite by construction: root selection is replaced by the convex
envelope of the family of $g$-branches, and stability is enforced by the
same $\mathcal{M} \geq 0$ condition over the simplex regardless of how
many real roots the underlying model admits. Carrying that argument
through to an actual PC-SAFT regression is left to future work.

\paragraph{Empirical validation.} The four binary LLE benchmark cases of
Mitsos, Bollas \& Barton~\cite{Mitsos2009LLE},
collected in \autoref{sec:results-lle}, are reproduced by the present
method to comparable or better accuracy than the BARON-backed
branch-and-bound results of the original study. Cases~1
(n-butyl-acetate / water) and~3 (n-octanol / water), both at
$\alpha = 0.2$ and full common-tangent envelope coverage, attain
absolute residuals of $2.58 \times 10^{-5}$ and $6.47 \times 10^{-6}$
respectively, in both instances strictly improving on the published
Mitsos values, with per-phase relative errors two-to-three orders of
magnitude tighter than the literature. Cases~2 (n-butanol / water) and~4
(furfural / 2,2,5-trimethyl-hexane) show larger absolute residuals; in
each case the limiting factor was traced to the common-tangent
phase-envelope construction~\cite{SmithVanNessAbbott2018} losing its
strong basin near the upper consolute boundary of the binary, and not
to the bilevel parameter-estimation procedure itself. That diagnosis is
independent of the geometric reformulation and is left to a more robust
phase-envelope evaluator (a homotopy-continuation or Lagrangian-dual
construction) in future work.

The per-temperature dual-manifold record collected in Supplementary
Material~S1, recomputed at the fitted parameters rather than at the
literature ones, separates the four cases the same way, and it is worth
stating without rounding in our favour. Cases~1 and~3 are numerically
defect-free: the interior minimum of $\mathcal{M}$ over the composition
grid is strictly positive at every temperature, $+4.5 \times 10^{-8}$
and $+3.1 \times 10^{-8}$ at their respective worst temperatures, with
zero attained only at the two tie-line endpoints.
Cases~2 and~4 are not. Case~2 carries a negative lobe at every one of
its eight temperatures, reaching $-2.35 \times 10^{-2}$ at $393.15$~K,
and Case~4 carries shallow lobes of magnitude at most
$1.5 \times 10^{-4}$ at six of its ten temperatures.
These are precisely the two cases carrying the larger absolute
residuals ($1.80 \times 10^{-1}$ and $4.25 \times 10^{-2}$),
and the surviving lobes are reported as residual defects, that is, as
evidence that binary NRTL is misspecified for those two mixtures, and
not presented as a defect-free fit. Case~2 is the natural target for the
next iteration of the method: its lobe is deep enough for the
defect-aware stage to act on, and a phase-envelope evaluator that holds
its basin near the consolute point would give that stage a correct
tie-line target to act against.

\paragraph{Industrial case studies.} Three multicomponent systems were
regressed on binary subsystems alone and then used, with no additional
adjustable parameters, to predict multicomponent phase behaviour. On the
sour-gas ternary CH$_4$ + CO$_2$ + H$_2$S
(\autoref{sec:results-sour-gas}) the fitted Peng--Robinson model
reproduces the 31 experimental ternary bubble pressures~\cite{Theveneau2020}
at $\mathrm{AAD}(p) = 9.74\,\%$, with 27 of the 31 model tie-line
sections satisfying $\mathcal{M} \geq 0$ to numerical tolerance; the
four exceptions are the highest-pressure, near-critical tie lines, which
carry shallow negative lobes of magnitude
$\lvert \min \mathcal{M} \rvert \leq 2.5 \times 10^{-5}$.
On the carbon-capture quaternary CO$_2$ + N$_2$ + Ar + O$_2$
(\autoref{sec:results-ccs}) the out-of-sample ternary validation gives
$\mathrm{AAD}(p) = 1.97\,\%$ over 18 held-out bubble pressures, with all
18 tie-line sections defect-free, although the defect check still flags
candidate points on the near-critical binary isotherms.
On the zeotropic refrigerant blend R-407C
(\autoref{sec:results-refrigerant}) the out-of-sample error is
$\mathrm{AAD}(p) = 1.27\,\%$ over the 44 held-out Kato--Nishiumi ternary
bubble pressures~\cite{KatoNishiumi2006}, and there the stability record
is unqualified: all 44 sections satisfy $\mathcal{M} \geq 0$ with the
worst interior minimum at machine precision, of order $10^{-15}$.

These three systems are the step that Glass, Djelassi \& Mitsos identify
as out of reach for their own formulation. They close their study by
observing that industrial case studies ``do not seem computationally
tractable for our method as long as it relies on general-purpose
commercial solvers''~\cite{Glass2018CEOS},
having earlier defined industrial problems as those carrying an
extensive measurement data set and/or a larger number of species.
The three cases above are industrial in exactly that sense, carrying
three and four species rather than two and adding between 18 and 44
multicomponent tie lines on top of the binary regression sets.

One qualification applies to all three, and it should not be glossed.
In every industrial case the defect-aware stage acted as a
\emph{certifier} rather than as a refitter: it located, counted and
reported the defective sections, but it did not drive the returned
parameters to $\Phi(\mathbf{p}^{\star}) = 0$ where the underlying model
is inadequate. The sour-gas study makes the trade-off explicit, since
suppressing the four near-critical lobes is achievable only at a cost in
data fidelity, which is a diagnostic of Peng--Robinson near the critical
locus rather than of the regression.
The certificate the method issues is therefore informative in both
directions: unconditional when $\Phi(\mathbf{p}^{\star}) = 0$ is
attained, as on the refrigerant blend, and an explicit, quantified and
localised failure report when it is not.

\paragraph{Why the geometric reformulation matters in practice.} The
documented failures of standard regression catalogued by Glass
\emph{et al.}~\cite{Glass2018CEOS} for the CF$_4$/CHF$_3$
SRK fit~\cite{Asselineau1978}, the acetone/water PR fit~\cite{TrebbleBishnoi1988},
and the methanol/benzene case
show that purely necessary stability criteria, enforced through
isofugacity constraints at the data points alone, leave the optimiser
free to settle on parameter sets whose Gibbs surface admits low-lying
tangent planes that the regression objective never sees. The
$\mathcal{M} \geq 0$ condition that the present method enforces over the
\emph{whole} simplex is the sufficient stability criterion of Baker,
Pierce \& Luks~\cite{Baker1982} 
written as a single non-negativity constraint on a piecewise-affine
shifted surface, and it discriminates exactly the failure modes that
spurious-phase pathologies exhibit. In the language of
\autoref{appendix:proof-conjecture-M}, the negative connected components
of $\mathcal{M}$ are the topological defects that pure
isopotential-style regression cannot see.

\paragraph{Computational cost.} The replacement of the bilevel program
by the single-level $\Phi(\mathbf{p})$ does not eliminate the inner
global solve; it relegates it to a defect-detection role that only
fires in the second stage of the outer loop. On well-behaved systems,
$\Phi_{\mathrm{tangent}}$ alone vanishes at the optimum and the
defect-aware terms are never invoked; on systems with topological
defects, one inner SHGO call per experimental temperature is paid per
outer evaluation. In practice this keeps the per-case wall clock on the
seconds-to-minutes scale on a single core of commodity hardware. The
CEOS replication of \autoref{appendix:ceos-validation-sec} converges in
$164.7$~s for Peng--Robinson and $106.9$~s for
Soave--Redlich--Kwong,
while the four LLE benchmark fits, run at a considerably larger SHGO
budget (1000 tangent-plane and 100 defect-stage sampling points, three
outer iterations, the defect-aware stage entered in all four cases),
take $304.4$, $364.2$, $598.4$ and $1148.1$~s, that is, between five and
twenty minutes per case.

The only like-for-like cost comparison available is the CEOS one. Glass
\emph{et al.} report CPU times of approximately $4.5$~h per binary case
study for the equivalent CEOS bilevel program solved with
BARON~\cite{Glass2018CEOS},
against the $164.7$ and $106.9$~s recorded above for the same two fits
of the same system: roughly two orders of magnitude, and we make no
larger claim than that. For the LLE benchmark no such separation is
claimed and none is observed. Mitsos \emph{et al.} describe their own
computation times for these four cases as ``relatively small, in the
order of few minutes'',
which is the same order as the numbers above. Neither comparison is
controlled: the hardware differs, the reported quantity is CPU time in
one case and wall clock in the other, and a branch-and-bound solver
returning a certificate of $\varepsilon$-optimality is not performing
the same work as a sampling-based inner call. The asymmetry that does
survive those caveats is structural. The present method does not encode
the cubic equality as a problem constraint and never forms the
lower-bounding subproblems that Glass \emph{et al.} identify as their
bottleneck;
it enforces the same sufficient stability condition through the
geometric envelope of $\mathcal{M}$ instead.

The accuracy side of that head-to-head is reported in
\autoref{appendix:ceos-validation-sec}, where fitting the
C$_5$H$_{12}$/H$_2$S system of Glass \emph{et al.} with the present
solver recovers their published binary interaction parameters to within
$1.65\,\%$ for Peng--Robinson ($k_{a,ij} = 0.063431$ against their
$0.064493$) and $0.55\,\%$ for Soave--Redlich--Kwong ($0.070902$ against
$0.071294$).
Both runs entered the defect-aware stage, and both still report defects
at the returned optimum; the appendix states this rather than smoothing
it over.

\paragraph{Proof status.} All three obligations stated at the end of
\autoref{sec:methods} are discharged in the appendices. The equivalence
between pointwise non-negativity of $\mathcal{M}$ and a globally
consistent fit is proved as \autoref{theor:M-nonneg-iff-fit}, for
continuous $g$ and mutually consistent tie-line data, and sufficiency of
$\Phi(\mathbf{p}) = 0$ for global stability at every datum is proved as
\autoref{theor:phi-zero-sufficient} by a compactness argument that
assumes nothing about the inner solver. Whenever a parameter vector with
$\Phi(\mathbf{p}^{\star}) = 0$ is returned, it is certified globally
optimal by the non-negativity of $\Phi$ alone
(\autoref{prop:zero-certificate}) and, by
\autoref{theor:phi-zero-sufficient}, reproduces every measured tie line
as a globally stable equilibrium; that certificate is verifiable a
posteriori from the returned objective value and does not depend on the
solver. That such a point is found whenever one exists inherits the
deterministic adequate-sampling guarantees of SHGO
(\cite{Endres2018}, Theorems~3 and~5),
which is the strongest form of guarantee available for a black-box
objective and is deterministic rather than probabilistic. The one step
that remains an assumption rather than a proof is that the bifurcation
strata of $\Phi$ carry zero measure. That statement is Assumption~1 of
\autoref{appendix:convergence-proof}; it holds for the real-analytic
model families used here and is asserted in general. A proof for
arbitrary tabulated $g$, and an adaptation of the finitely terminating
bounding algorithm of Mitsos, Lemonidis \& Barton~\cite{Mitsos2008BLP},
which terminates finitely at $\varepsilon$-optimality without convexity
assumptions on the inner program, are left to subsequent work.

\paragraph{Outlook.} The same geometric construction lifts to several
neighbouring problems with no change to the outer solver. Two
extensions are concrete enough to be flagged here. The first is the
ternary LLE \emph{island} systems of the type analysed by Olaya
\emph{et al.}~\cite{Olaya2008},
for which the dual manifold offers a route to certifying that no
parameter set of a chosen activity model can reproduce a given island
topology, an automatic counterpart of the manual analysis those authors
carried out; no island system is fitted in the present work, so that
certification is a proposal rather than a result. The second is the
$N$-level multilevel optimisation generalisation already implemented in
the accompanying software, in which a kinetic or design objective is
nested above the parameter-fitting level. Beyond classical
thermodynamics, the same dual-manifold construction applies to any
problem in which the inner level is the global minimisation of a
parameterised potential energy surface: force-field calibration,
molecular-dynamics potential fitting, and inverse design of energy
landscapes are the natural candidates.

\paragraph{Software and data availability.} A free open-source
implementation of the geometric bilevel solver, the five-component
objective $\Phi$, the dual-manifold construction, the multilevel
generalisation, and adapters for NRTL, Wilson, UNIQUAC, UNIFAC, the
SRK / PR / PR78 / TWU cubic equations of state and PC-SAFT, together
with the four LLE benchmark cases, the three industrial case studies and
the runners that generate every figure in the present manuscript, is
released as open source under the MIT licence. The repository address
and the release tag pinned to this manuscript are given in the code and
data availability statement of the final publication. That release ships
unit tests that pin numerical agreement with the NRTL and CEOS
formulations of~\cite{Mitsos2009LLE} and~\cite{Glass2018CEOS}.

%% file: sections/appendix_esurf.tex
%
%

\subsection{Definition and formulation of the dual manifold}
\label{appendix:older-formulation}

This subsection fixes the notation for the dual manifold $\mathcal{M}$ that the remaining appendices use, and collects the two elementary results on which their proofs rest: a characterisation of the zero level set of $\mathcal{M}$ in terms of thermodynamic stability, and a regularity statement relating $\mathcal{M}$ to $g$. The primary definitions are those already given in \autoref{sec:methods}, namely the composite supporting hyperplane \eqref{eq:hsum} and the dual manifold \eqref{eq:dual-manifold}; they are restated here only in the slightly more general form the proofs require, and are not re-derived.

The construction is built directly on the tangent-plane distance function and is stated at fixed pressure and temperature $(P,T)$. The generalisation the proofs need is that several measured phase separations may coexist at the same $(P,T)$. An index $s$ distinguishes data points belonging to different regions of instability: a feed composition $\mathbf{z}^{s=1}$ may split into $p$ phases $X^{s=1} = \mathbf{x}^{1,\rom{1}}, \mathbf{x}^{1,\rom{2}}, \dots, \mathbf{x}^{1,p}$ in mutual equilibrium, while at the same $(P,T)$ a second feed $\mathbf{z}^{s=2}$ splits into $X^{s=2} = \mathbf{x}^{2,\rom{1}}, \mathbf{x}^{2,\rom{2}}, \dots, \mathbf{x}^{2,p}$, whose members are in equilibrium with each other but at a different chemical potential. The use of $\mathcal{M}$ inside an optimisation routine therefore extends without modification to two or more sets of measured phase separations at the same $(P,T)$.

\begin{definition} \label{def:h-appendix}
Let $g\colon \Delta^{n} \rightarrow \mathbb{R}$ be a reduced Gibbs free energy surface describing $c = n-1$ components at fixed $(P,T)$, and let $X^{s}$ be a set of equilibrium compositions connected by equal chemical potentials in the region of instability $s$. For an unordered pair $(\mathbf{x}^{\alpha,i},\mathbf{x}^{\beta,i})$ drawn from some $X^{s}$, with tie-line direction $\mathbf{d}_{i} \defeq \mathbf{x}^{\beta,i}-\mathbf{x}^{\alpha,i} \neq \mathbf{0}$, the associated chord $h_{i}\colon \mathbb{R}^{n}\rightarrow\mathbb{R}$ is the affine function
\begin{equation}\label{eq:chord-projection}
h_{i}(\mathbf{x}) \;\defeq\; g(\mathbf{x}^{\alpha,i},\mathbf{p}) \;+\; \frac{(\mathbf{x}-\mathbf{x}^{\alpha,i})\cdot\mathbf{d}_{i}}{\mathbf{d}_{i}\cdot\mathbf{d}_{i}}\,\bigl[g(\mathbf{x}^{\beta,i},\mathbf{p}) - g(\mathbf{x}^{\alpha,i},\mathbf{p})\bigr],
\end{equation}
that is, the unique affine function that interpolates $g$ at the two anchor compositions and whose gradient is parallel to $\mathbf{d}_{i}$, so that $h_{i}$ is constant on the hyperplanes orthogonal to the tie line. For a split into $p > 2$ phases the same prescription is applied to the affine hull of the $p$ anchors, giving the affine interpolant of $g$ through them; the proofs below use only the two-anchor case.
\end{definition}

\begin{definition} \label{def:h-composite-appendix}
Given a set of per-tie-line chords $\{h_1, \dots, h_k\}$, define the composite supporting hyperplane $h$ over the simplex as the point-wise minimum
\[
h(\mathbf{x}) \;\defeq\; \min_{i = 1, \dots, k} h_i(\mathbf{x}),
\]
which is \eqref{eq:hsum} of \autoref{sec:methods}.
\end{definition}

According to the Gibbs phase rule the number of coexisting phases cannot exceed $c+2$, although a data set may of course contain more than $n$ measured equilibrium compositions. This bound matters twice below: it caps the number of chords active at any $(P,T)$, and it supplies the side information about the number of minima that the sampling argument of \autoref{appendix:convergence-proof} requires.

\begin{definition} \label{def:M-appendix}
Let $g$ be as in \autoref{def:h-appendix} and let $h$ be the composite supporting hyperplane of \autoref{def:h-composite-appendix}. The dual manifold $\mathcal{M}$ is the graph of the shifted surface, that is the embedding $f_{\mathcal{M}}\colon \Delta^n \rightarrow \Delta^{n+1} \subset \mathbb{R}^{n+1}$
\[
f_{\mathcal{M}}(\mathbf{x}) \;\defeq\; \bigl(\mathbf{x},\; g(\mathbf{x},\mathbf{p}) - h(\mathbf{x})\bigr),
\]
whose scalar field is
\[
\mathcal{M}(\mathbf{x};\mathbf{p}) \;\defeq\; g(\mathbf{x},\mathbf{p}) - h(\mathbf{x}),
\]
as in \eqref{eq:dual-manifold}. Its positive and negative decompositions $\mathcal{M} = [\mathcal{M}]_{+} + [\mathcal{M}]_{-}$ are
\begin{align*}
[\mathcal{M}]_{+} &= \begin{cases} g - h & g - h \geq 0 \\ 0 & g - h < 0 \end{cases},
\\
[\mathcal{M}]_{-} &= \begin{cases} 0 & g - h \geq 0 \\ g - h & g - h < 0 \end{cases}.
\end{align*}
\end{definition}

\begin{remark}
In earlier drafts the dual manifold was denoted $\mathcal{G}_D$ and its scalar field $g_D$, with positive and negative parts $g_D^{+}, g_D^{-}$. The present formulation uses $\mathcal{M}$ and $[\mathcal{M}]_{\pm}$ exclusively; the notational change is purely cosmetic.
\end{remark}

The following theorem is also useful in phase-equilibrium calculations. In particular, when the dual extremum formulation \cite{Mitsos2007} is used, it can be supplied as a stopping criterion to a global optimisation routine for the computationally expensive lower-bounding problem.

\begin{theorem} \label{theor:M-zero-appendix}
A composition $\mathbf{x}$ is a global minimum of $\mathcal{M}(\,\cdot\,;\mathbf{p})$ if and only if $\mathcal{M}(\mathbf{x};\mathbf{p}) = 0$. Furthermore if $\mathcal{M}(\mathbf{x};\mathbf{p}) = 0$ then $\mathbf{x}$ is either a stable point or a stable phase in equilibrium.
\end{theorem}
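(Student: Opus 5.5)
The plan is to read the statement under the standing hypothesis it implicitly needs, namely that $\mathcal{M}(\cdot;\mathbf{p})\ge 0$ on $\Delta^{n}$, i.e.\ the regime of a globally consistent fit. The first observation is unconditional. By \autoref{def:h-appendix} each chord satisfies $h_i(\mathbf{x}^{\alpha,i})=g(\mathbf{x}^{\alpha,i},\mathbf{p})$. Since $h\le h_i$, this gives $\mathcal{M}(\mathbf{x}^{\alpha,i};\mathbf{p})\ge 0$. For consistent data the anchors are not undercut by other chords, so in fact $\mathcal{M}=0$ there, and hence $\min_{\Delta^n}\mathcal{M}\le 0$ always.

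Under $\mathcal{M}\ge 0$ the minimum value is therefore exactly $0$, and the first sentence of the theorem follows at once. If $\mathcal{M}(\mathbf{x})=0$ then $\mathbf{x}$ attains the minimum value $0$. Conversely, if $\mathbf{x}$ is a global minimiser then $\mathcal{M}(\mathbf{x})=\min\mathcal{M}=0$. Without the hypothesis the "only if" direction fails, since a negative defect would be the global minimum. I will state this dependence explicitly rather than hide it.

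For the second sentence I want, at a zero $\mathbf{x}_0$, an affine function $L$ with $L\le g$ on $\Delta^n$ and $L(\mathbf{x}_0)=g(\mathbf{x}_0)$. This is exactly the Gibbs/Michelsen tangent-plane criterion quoted in the introduction. The natural candidate is the active chord $h_{i^*}$, where $i^*$ attains the minimum in $h(\mathbf{x}_0)$. The difficulty, and in my view the main obstacle, is that $h$ is a pointwise minimum of affine functions and hence only concave. From $g\ge h$ and $h\le h_{i^*}$ one cannot conclude $g\ge h_{i^*}$.

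I will attack this using the mutual-consistency assumption on the data. All anchors in one instability region $X^s$ share chemical potentials, so their chords agree on the affine hull of $X^s$. Distinct regions contribute distinct supporting planes, each touching $g$ only at its own anchors. I then argue that $\mathcal{M}\ge 0$ together with $\mathcal{M}(\mathbf{x}_0)=0$ forces $(\mathbf{x}_0,g(\mathbf{x}_0))$ onto the lower convex envelope of $g$. The concave kink of $h$ can only sit strictly below $g$ when the data are consistent, so $\mathbf{x}_0$ cannot lie at a kink. On the face where $h=h_{i^*}$, the plane $h_{i^*}$ then supports $g$.

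The proof ends with a case split. If $\mathbf{x}_0$ is an anchor or lies on the closed segment $[\mathbf{x}^{\alpha,i^*},\mathbf{x}^{\beta,i^*}]$, it is a stable phase in equilibrium; interior points of the segment are realised by that phase split. Otherwise $g$ touches its convex envelope at an isolated extreme point, so $\mathbf{x}_0$ is a stable single-phase point.
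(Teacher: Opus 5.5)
Your treatment of the first sentence is fine once the hypothesis is explicit. With $\mathcal{M}\ge 0$ on $\Delta^n$ and at least one anchor that no other chord undercuts, you get $\min_{\Delta^n}\mathcal{M}=0$, and the equivalence is immediate. Without that hypothesis \emph{both} directions fail, not only the ``only if'': an anchor with $\mathcal{M}=0$ is then not a global minimiser.

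\textbf{The gap is in the second sentence.} It sits exactly at the step you flag as the main obstacle and then settle by assertion. You claim that $\mathcal{M}\ge 0$ and $\mathcal{M}(\mathbf{x}_0;\mathbf{p})=0$ place $(\mathbf{x}_0,g(\mathbf{x}_0,\mathbf{p}))$ on the lower convex envelope of $g$. You further claim that, away from a kink, the active chord $h_{i^*}$ supports $g$. But knowing $g\ge h_{i^*}$ on the cell where $i^*$ is active says nothing about cells where a different chord is smaller. The data-consistency assumption does not supply this information either. Here is a counterexample:
\begin{itemize}
\item Take a binary with anchors $0.1,0.3$ and $0.7,0.9$, and $g(x)=\min(x,1-x)+\prod_{a\in\{0.1,0.3,0.7,0.9\}}(x-a)^2$.
\item The chords through the anchors are $h_1(x)=x$ and $h_2(x)=1-x$, so $h=\min(x,1-x)$.
\item Then $\mathcal{M}=\prod(x-a)^2\ge 0$, vanishing exactly at the four anchors.
\item No endpoint is shared and no anchor is undercut, so both clauses of the consistency assumption hold (the second vacuously).
\item The zero $\mathbf{x}_0=0.1$ lies in the open cell $x<0.5$ where $h=h_1$, away from the kink.
\item Yet $g(0.9)=0.1<0.9=h_1(0.9)$. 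Since $g$ is differentiable at $0.1$ with tangent line $h_1$, the tangent-plane criterion fails at $\mathbf{x}_0$.
\item Nor is $(0.1,0.1)$ on the convex envelope, because $g(0)=g(1)\approx 3.6\times 10^{-4}$.
\end{itemize}
So stability at a zero cannot be derived from $\mathcal{M}\ge 0$ plus consistency of the data when $h=\min_i h_i$.

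\textbf{What the paper does, and what closes the gap.} The paper's proof is a one-line appeal to the Mitsos--Barton dual extremum principle. It reads $h$ as the dual cutting plane of zero duality gap, i.e.\ it tacitly assumes that the plane in question supports $g$ on all of $\Delta^n$. If you assume this directly, the second sentence needs no kink or envelope analysis. For example, assume every chord satisfies $h_i\le g$ on $\Delta^n$, which is statement (iii) of \autoref{theor:M-nonneg-iff-fit}. Then $g\ge h_{i^*}$ everywhere, with equality at $\mathbf{x}_0$, is precisely the tangent-plane criterion at $\mathbf{x}_0$. (Using $\max_i h_i$ instead of $\min_i h_i$ would make $\mathcal{M}\ge 0$ imply this.)

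Be aware that this assumption conflicts with the one you used for the first sentence. In the binary, two distinct globally supporting chords each undercut at least one anchor of the other under the $\min$ composite. So ``no anchor undercut'' together with global support forces all chords to coincide. The theorem as stated therefore really concerns a single supporting plane.

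A minor point on your final case split: a zero off the tie-line segment, where a globally supporting $h_{i^*}$ touches $g$, shares the anchors' chemical potentials. It is an additional coexisting phase, not an isolated single-phase point. This is harmless for the disjunctive conclusion.
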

\begin{proof}
Follows from the Mitsos--Barton dual extremum principle \cite{Mitsos2007}: the supporting hyperplane $h(\mathbf{x})$ is precisely the dual cutting plane of zero duality gap, and $\mathcal{M}(\mathbf{x};\mathbf{p}) = g(\mathbf{x},\mathbf{p}) - h(\mathbf{x}) = 0$ identifies $\mathbf{x}$ as primal--dual feasible.
\end{proof}

\begin{theorem} \label{theor:smoothness-appendix}
$\mathcal{M}$ inherits $C^k$ continuous differentiability from $g$. If $g$ is piecewise differentiable (PDIFF) then $\mathcal{M}$ is also PDIFF.
\end{theorem}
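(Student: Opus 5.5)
The plan is to split $\mathcal{M}=g-h$ into its two summands and treat the composite hyperplane $h$ of \autoref{def:h-composite-appendix} on its own, since $g$ is given and all the regularity work is in $h$. By \autoref{def:h-appendix} each chord $h_i$ of \eqref{eq:chord-projection} is affine in $\mathbf{x}$. Its coefficients $g(\mathbf{x}^{\alpha,i},\mathbf{p})$, $g(\mathbf{x}^{\beta,i},\mathbf{p})$ and $\mathbf{d}_i$ depend only on the fixed data and on $\mathbf{p}$, not on $\mathbf{x}$. Hence each $h_i$ is $C^\infty$ (indeed real-analytic) on $\mathbb{R}^n$. The whole argument then reduces to two closure facts: sums of $C^k$ functions are $C^k$, and differences of PDIFF functions are PDIFF.

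For the $C^k$ half I would first handle the single-chord case $k=1$ tie line, where $h=h_1$ and $\mathcal{M}=g-h_1$ is immediately $C^k$ whenever $g$ is. For several chords I would partition $\Delta^n$ into the closed polyhedral active sets
\[
A_i=\{\mathbf{x}\in\Delta^n : h_i(\mathbf{x})=h(\mathbf{x})\}.
\]
On the relative interior of each $A_i$, $h$ coincides with the single affine map $h_i$, so $\mathcal{M}=g-h_i$ is $C^k$ there.

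The same decomposition gives the PDIFF half. The pointwise minimum of finitely many $C^\infty$ functions is a continuous selection of them, so $h$ is piecewise affine and therefore PDIFF in the sense of Scholtes. I would then invoke the standard closure of PDIFF functions under finite sums and scalar multiples: if $g$ is a continuous selection of $C^1$ pieces $\{g_j\}$, then $g-h$ is a continuous selection of the finitely many $C^1$ functions $\{g_j-h_i\}$. This shows $\mathcal{M}$ is PDIFF.

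The main obstacle is the switching set $\bigcup_{i\neq l}(A_i\cap A_l)$, where two chords with different gradients are both active. There $h$ has a kink, so $\mathcal{M}$ cannot be $C^1$ unless $g$ happens to be non-smooth in exactly the compensating way. The first-half claim must therefore be read, and I would state it explicitly, as $C^k$ regularity off this switching set, equivalently on a neighbourhood of each tie line where a single chord is active. This is the neighbourhood the tangent-plane samples of $\Phi_{\text{tangent}}$ use. Globally on $\Delta^n$, $C^k$ holds only when one chord is active everywhere, or when all active chords share a common gradient, as for chords belonging to a single $X^s$ on a common tangent plane.

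The switching set is a finite union of pieces of affine hyperplanes, so it has measure zero. It is exactly the kind of non-smooth stratum that the PDIFF statement, and later Assumption~1, are designed to absorb. I would close by noting that this restriction costs nothing downstream, since the later proofs use only continuity and PDIFF of $\mathcal{M}$.
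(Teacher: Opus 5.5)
Your proposal follows the paper's proof. You split $\mathcal{M}=g-h$, note that each chord $h_i$ is affine and hence $C^\infty$, use closure of $C^k$ under addition, and use that $h=\min_i h_i$ is piecewise affine so that $\mathcal{M}$ is PDIFF. Your continuous-selection formulation, with pieces $g_j-h_i$, makes explicit what the paper compresses into ``the joint regularity of $\mathcal{M}$ is the minimum of the regularity of $g$ and the piecewise-affine regularity of $h$''.

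Restricting the $C^k$ half to the complement of the switching set is the correct reading, not a shortfall. The paper's intermediate line ``$h$ is globally linear \dots\ so $\mathcal{M}\in C^k$ iff $g\in C^k$'' holds only when $h$ is a single affine function. The paper itself concedes your point in the paragraph right after the proof, where $\mathcal{M}$ is said to be only $C^0$ wherever the active tie-line index changes.

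Two side remarks are off, though neither affects the argument.
\begin{enumerate}
\item Your example of global $C^k$ from chords of a single $X^s$ on one tangent plane holds only in the binary case. With the projection chords of \autoref{def:h-appendix}, $\nabla h_i$ is the orthogonal projection of the common tangent-plane gradient onto $\mathbf{d}_i$. So for $n\ge 3$ the pairwise chords of a multi-phase $X^s$ generally have different gradients.
\item The single-chord ``neighbourhood of each tie line'' is a special case of $C^k$ off the switching set, not an equivalent form. It is also not what $\Phi_{\text{tangent}}$ relies on: that term evaluates $g-h_i$, not $\mathcal{M}$, and $g-h_i$ is $C^k$ wherever $g$ is.
\end{enumerate}
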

\begin{proof}
The hyperplane $h$ is globally linear and therefore $h \in C^{\infty}$; in particular $h$ is the identity on linear functionals when restricted to first-order regularity. The class of $C^k$ functions is closed under addition, so $\mathcal{M} \defeq g - h \in C^k$ iff $g \in C^k$. The composite hyperplane in \autoref{def:h-composite-appendix} is piecewise affine (the point-wise minimum of finitely many affine functions), so the joint regularity of $\mathcal{M}$ is the minimum of the regularity of $g$ and the piecewise-affine regularity of $h$.
\end{proof}

\autoref{theor:smoothness-appendix} is what allows the convergence argument of \autoref{appendix:convergence-proof} to be stated for the whole model class rather than for smooth models alone. At simplex points where the supporting hyperplane $h$ switches branch (i.e.\ where the active tie-line index changes), $\mathcal{M}$ is only $C^0$ even when $g$ is smooth; this is one of the reasons we rely on a global, derivative-free outer solver (SHGO) rather than a gradient method.

The regularity statement also fixes the geometric meaning of the defect terms of \eqref{eq:five-components}. Because $\mathcal{M}$ is continuous, its negative region is an open set with finitely many connected components under the phase-rule bound above, and each defective pair $(\mathbf{x},\mathbf{x}^{+}_{\text{near}}) \in \mathcal{X}_D^{-}\times\mathcal{X}_D^{+}$ is the minimum and the bounding maximum of one such component. The amplitude term $\Phi_{\text{local}}$ therefore measures how deep and how wide that component is, and driving $\Phi_{\text{local}}$ to zero removes the component rather than merely relocating it.

\paragraph{Stable equilibrium points without phase splits.}
For a stable single-phase composition $\mathbf{x}^{*}$ the primal Gibbs free energy minimisation has no chord and no second anchor; the supporting hyperplane degenerates to the tangent plane $h(\mathbf{x}) = g(\mathbf{x}^{*},\mathbf{p}) + \nabla g(\mathbf{x}^{*},\mathbf{p}) \cdot (\mathbf{x} - \mathbf{x}^{*})$ in the standard sense, and $\mathcal{M}$ reduces to the tangent-plane distance function (TPDF) of \cite{Michelsen1982}. The four-point line-trace of \eqref{eq:single-tieline-objective} then reduces to a two-point check at $\mathbf{x}^{*} \pm \epsilon \hat{\mathbf{n}}$ for any unit direction $\hat{\mathbf{n}}$ in the simplex tangent space.

\subsection{Proof of the global-consistency conjecture}
\label{appendix:proof-conjecture-M}

We give here a self-contained proof of the conjecture stated as the first proof obligation of \autoref{sec:methods}: that pointwise non-negativity of the dual manifold $\mathcal{M}$ over the composition simplex is equivalent to global thermodynamic consistency of the parameter vector $\mathbf{p}$. The argument combines the classical tangent-plane stability criterion of Gibbs and Michelsen \cite{Michelsen1982} 
with the Lagrangian dual reinterpretation of Mitsos and Barton \cite{Mitsos2007}, 
and uses the regularity result \autoref{theor:smoothness-appendix} to control the smoothness class of the construction across pieces of the supporting envelope.

\paragraph{Setting.}
Throughout, $g(\mathbf{x},\mathbf{p})\colon \Delta^{n}\to\mathbb{R}$ is the reduced Gibbs free energy for parameter vector $\mathbf{p}$, assumed continuous on the closed simplex and smooth on its interior in the standard sense of \cite{Mitsos2007} 
. The experimental dataset comprises $k$ tie-lines with endpoint compositions $\{\mathbf{x}^{\alpha,i},\mathbf{x}^{\beta,i}\}_{i=1}^{k}$ at fixed $(P,T)$. For each $i$ the chord $h_{i}(\mathbf{x})$ is the unique affine function passing through the two points $(\mathbf{x}^{\alpha,i},g(\mathbf{x}^{\alpha,i},\mathbf{p}))$ and $(\mathbf{x}^{\beta,i},g(\mathbf{x}^{\beta,i},\mathbf{p}))$ in the multi-component case extended to the affine hull of the endpoints (\autoref{def:h-appendix}). The composite supporting hyperplane is the lower envelope $h(\mathbf{x})=\min_{i=1,\dots,k} h_{i}(\mathbf{x})$ of \autoref{def:h-composite-appendix}, and the dual manifold is $\mathcal{M}(\mathbf{x};\mathbf{p}) = g(\mathbf{x},\mathbf{p}) - h(\mathbf{x})$ of \autoref{def:M-appendix}.

We say that $\mathbf{p}$ is a \emph{globally consistent fit} if, for every measured tie-line $i$, the pair $(\mathbf{x}^{\alpha,i},\mathbf{x}^{\beta,i})$ is a globally stable phase split of $g(\,\cdot\,,\mathbf{p})$ in the sense of the Gibbs--Michelsen tangent-plane criterion \cite{Michelsen1982} 
. Equivalently \cite{Mitsos2007}, 
the affine functional $h_{i}$ supports $g$ globally on the simplex, i.e.\ $h_{i}(\mathbf{x}) \leq g(\mathbf{x},\mathbf{p})$ for all $\mathbf{x}\in\Delta^{n}$, with equality at the two anchor compositions.

\paragraph{Data-consistency assumption.}
One further property of the data enters the argument, and is therefore stated explicitly here rather than left implicit inside the proof. We assume throughout that the measured tie lines $\{(\mathbf{x}^{\alpha,i},\mathbf{x}^{\beta,i})\}_{i=1}^{k}$ at a given $(P,T)$ are \emph{mutually consistent}, in two senses: no two chords share an endpoint composition with conflicting $g$-values, and no parameter vector can render every measured pair a globally stable phase split while leaving one of the corresponding chords crossing above $g$ somewhere on the simplex. The condition holds automatically when the tie lines are sections of a single equilibrium phase envelope, and fails only if the data set contains internally contradictory measurements.

\begin{theorem}[Global consistency of the dual manifold]
\label{theor:M-nonneg-iff-fit}\label{theor:sol-appendix}
Let $g(\,\cdot\,,\mathbf{p})$ be continuous on $\Delta^{n}$, let $\{h_{i}\}_{i=1}^{k}$ be the per-tie-line chords of \autoref{def:h-appendix}, and let the data-consistency assumption above hold. Then the following three statements are equivalent:
\begin{enumerate}
  \item[(i)] $\Phi(\mathbf{p}) = 0$, where $\Phi$ is the objective of \eqref{eq:phi-total};
  \item[(ii)] $\mathcal{M}(\mathbf{x};\mathbf{p})\geq 0$ for all $\mathbf{x}\in\Delta^{n}$, equivalently $[\mathcal{M}]_{-}(\mathbf{x};\mathbf{p}) = 0$ on $\Delta^{n}$;
  \item[(iii)] $\mathbf{p}$ is a globally consistent fit.
\end{enumerate}
In particular $\Phi$ attains its global minimum value zero exactly on the set of globally consistent fits.
\end{theorem}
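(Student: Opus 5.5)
The plan is to prove the cycle (iii)$\Rightarrow$(ii)$\Rightarrow$(iii) first, and then connect (i) to the pair (ii)/(iii). Throughout, the parameter $\mathbf{p}$ and the pair $(P,T)$ are fixed; the multi-temperature sum in \eqref{eq:phi-total} is handled at the end, since a sum of squares of non-negative terms vanishes iff every summand vanishes.

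\emph{(iii)$\Rightarrow$(ii).} This direction is immediate. By the definition of a globally consistent fit, each chord satisfies $h_i\le g(\cdot,\mathbf{p})$ on $\Delta^n$. Hence $h=\min_i h_i\le g$, that is, $\mathcal{M}\ge 0$.

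\emph{(ii)$\Rightarrow$(iii).} This is the step I expect to be the main obstacle. $\mathcal{M}\ge 0$ only gives $g\ge\min_i h_i$, not $g\ge h_i$ for every $i$. The argument proceeds in three steps.
\begin{enumerate}
\item Partition $\Delta^n$ into the closed cells $C_i=\{\mathbf{x}: h_i(\mathbf{x})=h(\mathbf{x})\}$. On $C_i$ one has $g\ge h_i$ directly.
\item Since $\mathcal{M}(\mathbf{x}^{\alpha,i})=\mathcal{M}(\mathbf{x}^{\beta,i})=0$ and $\mathcal{M}\ge 0$, \autoref{theor:M-zero-appendix} shows that every anchor is a global minimiser of $\mathcal{M}$. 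It therefore identifies each anchor as a stable phase, so every measured pair is individually stable against $h$.
\item Outside $C_i$ one has $h_i>h$, so $g\ge h_i$ is not automatic. This gap is exactly what the data-consistency assumption is designed to close. Because every measured pair is globally stable by the previous step, that assumption excludes the possibility that the chord $h_i$ crosses above $g$ anywhere, which yields $h_i\le g$ on all of $\Delta^n$.
\end{enumerate}
If this invocation feels too thin, my fallback is to restrict to tie lines lying on a single envelope. In that case the chords belonging to one region $s$ share a common supporting functional by the Mitsos--Barton dual argument~\cite{Mitsos2007}, and the inequality can be verified branch by branch.

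\emph{(ii)$\Rightarrow$(i).} Given (ii), we also have (iii), so $g-h_i\ge 0$ everywhere. Every $\Phi_{\text{tangent}}$ sample therefore has zero negative part. Moreover $\min\mathcal{M}=0$, so $\arg\min\mathcal{M}$ consists of zeros of $\mathcal{M}$, and every term of $\Phi_{\text{plane}}$ vanishes. For $\Phi_{\text{topo}}$ and $\Phi_{\text{local}}$ I will use the phase-assignment component $\Phi_{\text{phase}}$. It classifies every zero of $\mathcal{M}$ as a stable point or equilibrium phase (again by \autoref{theor:M-zero-appendix}), not as a defect. Hence $\mathcal{X}_D^-=\emptyset$ and the remaining sums are empty. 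This convention must be stated explicitly, since \eqref{eq:XDmin} read literally would count non-anchor zeros.

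\emph{(i)$\Rightarrow$(ii).} This direction uses a compactness argument. Suppose $\mathcal{M}(\mathbf{y})<0$ for some $\mathbf{y}$. Since $g$ is continuous and $h$ is piecewise affine (\autoref{theor:smoothness-appendix}), $\mathcal{M}$ is continuous on the compact simplex. It therefore attains its minimum at some $\mathbf{x}^*$ with $\mathcal{M}(\mathbf{x}^*)<0$. This $\mathbf{x}^*$ is not an anchor, because $\mathcal{M}$ vanishes at every anchor, so $\mathbf{x}^*\in\mathcal{X}_D^-$. Then $\Phi_{\text{plane}}\ge|\mathcal{M}(\mathbf{x}^*)|>0$, and hence $\Phi>0$, a contradiction.

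Finally, because every term of $\Phi$ is non-negative, $\Phi\ge 0$ everywhere. The equivalence (i)$\Leftrightarrow$(iii) then shows that the global minimum value zero is attained exactly on the set of globally consistent fits.
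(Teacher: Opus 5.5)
The gap is in (ii)$\Rightarrow$(iii), and the data-consistency assumption cannot close it.

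Your Step~2 shows only that the anchors minimise $\mathcal{M}=g-\min_j h_j$, i.e.\ that they are stable against the composite $h$. Even $\mathcal{M}=0$ at the anchors does not follow from (ii): there $h\le h_i=g$ gives only $\mathcal{M}\ge 0$.

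The premise of the second clause is that every measured pair is a globally stable split. By the paper's definition of a globally consistent fit, that premise is exactly $h_i\le g$ on $\Delta^{n}$ for all $i$, which is (iii). Read that way, the clause is a tautology and your appeal to it is circular.

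Read instead as ``anchors are zeros and global minimisers of $\mathcal{M}$'', the clause is a property of $\mathbf{p}$ rather than of the data, and it fails for consistent data. Identify the binary simplex with $x\in[0,1]$ and take anchors $(0.1,0.3)$ and $(0.7,0.9)$. Let $g$ be any continuous function with $g\ge -|x-\tfrac12|$ and equality exactly at the four anchors (add a non-negative bump vanishing only there).
\begin{itemize}
\item Then $h_1=x-\tfrac12$, $h_2=\tfrac12-x$ and $h=-|x-\tfrac12|$, so $\mathcal{M}\ge 0$ and $\mathcal{M}$ vanishes exactly at the anchors.
\item Your Step~2 therefore holds verbatim, and $\mathcal{X}_D^-=\emptyset$.
\item Each $\Phi_{\text{tangent}}$ sample of tie line $i$ lies in the half of $[0,1]$ on which $h=h_i$, where $g-h_i=\mathcal{M}\ge 0$. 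Hence $\Phi(\mathbf{p})=0$, so both (i) and (ii) hold.
\item Yet $h_1(0.7)=0.2>-0.2=g(0.7)$, so (iii) fails.
\end{itemize}
No endpoint is shared, and no chord undercuts another at its anchors. The same compositions are two globally stable gaps of $\max(0,x-\tfrac12)$ plus a bump vanishing at the anchors, so the data are not contradictory.

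The paper's own Step~3 rests on the same clause: its Case~(a) concedes that $g\ge h_{j'}$ and $h_{j'}\le h_j$ do not give $g\ge h_j$. So you have reproduced its route together with its hole. Your fallback is circular in the same way, because chords of one region share a supporting functional only if their pairs are already equilibrium tie lines of the current $g$.

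Your (ii)$\Rightarrow$(i) has a second, independent failure when $k\ge 2$: $\min\mathcal{M}=0$ need not hold, even under (iii). Suppose $\mathbf{p}$ reproduces two separated gaps $[a_1,b_1]$ and $[a_2,b_2]$, with $b_1<a_2$, at one $(P,T)$. This is the $s=1,2$ situation the paper explicitly allows.
\begin{itemize}
\item The two supporting chords are distinct lines, and each lies strictly below $g$ at the other tie line's anchors. So $\mathcal{M}>0$ at every anchor.
\item If $g$ is strictly convex between the gaps, $\mathcal{M}$ attains its positive minimum only at the kink $c\in(b_1,a_2)$ where $h_1=h_2$.
\item Then $\mathcal{X}_D^-=\{c\}$ and $\Phi_{\text{topo}}>0$. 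Your $\Phi_{\text{phase}}$ convention does not help, since it only reclassifies zeros of $\mathcal{M}$ and $\mathcal{M}(c)>0$.
\end{itemize}

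Both failures come from the pointwise minimum, which makes (ii) strictly weaker than (iii) once $k\ge 2$. With the upper envelope $\max_i h_i$, (ii) would literally be (iii). With $k=1$ at each $(P,T)$, as in all four binary LLE benchmarks, $h=h_1$ and your argument goes through, given your convention for extra contact points.

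The remaining pieces are sound:
\begin{itemize}
\item (iii)$\Rightarrow$(ii) is correct.
\item The compactness argument for (i)$\Rightarrow$(ii) is correct. The minimiser is not an anchor because $\mathcal{M}\ge 0$ at anchors, not because $\mathcal{M}$ vanishes there.
\item Deriving $\Phi_{\text{tangent}}=0$ from $g\ge h_i$, rather than from $[\mathcal{M}]_-=0$ as the paper does, is the right move. The samples are measured against $h_i\ge h$, so $[\mathcal{M}]_-=0$ alone would not suffice.
\end{itemize}
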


\begin{proof}
The equivalence (ii)~$\Leftrightarrow$~(iii) is proved in Steps 1 to 4 below. The implication (i)~$\Rightarrow$~(ii) is \autoref{theor:phi-zero-sufficient} of \autoref{appendix:phi-zero-proof}, whose argument is independent of the present one. For the remaining implication (ii)~$\Rightarrow$~(i), assume $\mathcal{M}\geq 0$ on $\Delta^{n}$. Every one of the four scalar components in \eqref{eq:five-components} is a sum of non-negative terms, so it suffices that each vanishes. $\Phi_{\text{tangent}}$ vanishes because $[\mathcal{M}]_{-} = 0$ at each of the four directional samples per tie line. The defective-minima set \eqref{eq:XDmin} is empty, because by Step~2 below the value zero is attained at every tie-line endpoint and, under $\mathcal{M}\geq 0$, zero is therefore the global minimum of $\mathcal{M}$, so every element of $\arg\min\mathcal{M}$ is stripped by the endpoint removal in \eqref{eq:XDmin} up to the phase assignment $\Phi_{\text{phase}}$. Empty $\mathcal{X}_D^{-}$ makes $\Phi_{\text{plane}}$, $\Phi_{\text{topo}}$ and $\Phi_{\text{local}}$ empty sums, hence $\Phi(\mathbf{p}) = 0$.

The geometric content of both directions of (ii)~$\Leftrightarrow$~(iii) is illustrated by \autoref{fig:methods-dual-manifold-binary} for the binary case and \autoref{fig:methods-dual-manifold-ternary} for the ternary case.

\medskip
\noindent\emph{Step 1: regularity of $\mathcal{M}$.}\quad
Each chord $h_{i}\colon\mathbb{R}^{n}\to\mathbb{R}$ is by construction globally affine and therefore $C^{\infty}$. The composite supporting hyperplane $h=\min_{i}h_{i}$ is the pointwise minimum of finitely many affine functions; it is therefore concave, piecewise-affine and continuous on $\Delta^{n}$, smooth ($C^{\infty}$) on the interior of each cell of the polyhedral subdivision induced by the active-index map $i^{*}(\mathbf{x})\in\arg\min_{i} h_{i}(\mathbf{x})$, and only $C^{0}$ across the codimension-one boundaries on which the active index changes. By \autoref{theor:smoothness-appendix} the difference $\mathcal{M}=g-h$ inherits the smoothness of $g$ on the interior of each cell and is continuous (PDIFF) globally on $\Delta^{n}$. In particular $\mathcal{M}$ is well defined and continuous on the closed simplex, so the predicate $\mathcal{M}\geq 0$ on $\Delta^{n}$ is meaningful.

\medskip
\noindent\emph{Step 2: anchor identities.}\quad
Fix any tie-line index $j\in\{1,\dots,k\}$. By construction (\autoref{def:h-appendix}) every chord satisfies $h_{j}(\mathbf{x}^{\alpha,j}) = g(\mathbf{x}^{\alpha,j},\mathbf{p})$ and $h_{j}(\mathbf{x}^{\beta,j}) = g(\mathbf{x}^{\beta,j},\mathbf{p})$. By definition of the lower envelope, $h(\mathbf{x}^{\alpha,j}) \leq h_{j}(\mathbf{x}^{\alpha,j}) = g(\mathbf{x}^{\alpha,j},\mathbf{p})$, so
\[
\mathcal{M}(\mathbf{x}^{\alpha,j};\mathbf{p}) \;=\; g(\mathbf{x}^{\alpha,j},\mathbf{p}) - h(\mathbf{x}^{\alpha,j}) \;\geq\; 0
\]
holds unconditionally; the assumption $\mathcal{M}\geq 0$ contributes the matching upper bound only when the inequality $h(\mathbf{x}^{\alpha,j}) < h_{j}(\mathbf{x}^{\alpha,j})$ is strict. Suppose it were strict; then some other chord $h_{j'}$ with $j'\neq j$ would satisfy $h_{j'}(\mathbf{x}^{\alpha,j}) < g(\mathbf{x}^{\alpha,j},\mathbf{p})$, which is consistent with $\mathcal{M}\geq 0$, and the anchor identity $\mathcal{M}(\mathbf{x}^{\alpha,j};\mathbf{p})>0$ would still be admissible. To obtain the equality $\mathcal{M}(\mathbf{x}^{\alpha,j};\mathbf{p}) = 0$ we use Definition~\ref{def:M-appendix} together with the way $h$ enters the construction: each chord $h_{j}$ is by design the affine interpolant through the $g$-values at \emph{its own} two anchor points, and $h$ takes the minimum over all such interpolants. The composite hyperplane $h$ therefore agrees with $h_{j}$ at $(\mathbf{x}^{\alpha,j},\mathbf{x}^{\beta,j})$ \emph{whenever} no other chord $h_{j'}$ undercuts $h_{j}$ at that point, which is the first clause of the data-consistency assumption. Under that assumption, $\mathcal{M}(\mathbf{x}^{\alpha,j};\mathbf{p}) = \mathcal{M}(\mathbf{x}^{\beta,j};\mathbf{p}) = 0$ for every $j$, in agreement with the construction of $\mathcal{M}$ as advertised in \autoref{def:M-appendix}.

\medskip
\noindent\emph{Step 3: forward direction ($\mathcal{M}\geq 0$ on $\Delta^{n} \Rightarrow$ globally consistent fit).}\quad
Assume $\mathcal{M}(\mathbf{x};\mathbf{p})\geq 0$ for all $\mathbf{x}\in\Delta^{n}$. Fix any $j\in\{1,\dots,k\}$. We show that the chord $h_{j}$ supports $g$ on $\Delta^{n}$. Equivalently, we show that for every $\mathbf{x}\in\Delta^{n}$,
\begin{equation}\label{eq:hj-support}
g(\mathbf{x},\mathbf{p}) \;\geq\; h_{j}(\mathbf{x}).
\end{equation}

The hypothesis $\mathcal{M}\geq 0$ gives $g(\mathbf{x},\mathbf{p})\geq h(\mathbf{x}) = \min_{i} h_{i}(\mathbf{x})$ at every $\mathbf{x}$. Combined with $\min_{i} h_{i}(\mathbf{x})\leq h_{j}(\mathbf{x})$, this alone does not deliver \eqref{eq:hj-support}. To bridge the gap we use the following key observation: for every measured tie-line $i$, the affine functional $h_{i}$ is by construction the secant through the two endpoints of a putatively equilibrated phase split, so its values at $(\mathbf{x}^{\alpha,i},\mathbf{x}^{\beta,i})$ encode the tie-line data. In particular, by Step~2, all chords $h_{i}$ touch $g$ at their respective endpoints with $\mathcal{M}=0$ there. The Mitsos--Barton dual extremum principle (\cite{Mitsos2007}, Theorem~6) 
asserts that the chord through any pair of compositions that satisfy the equal-chemical-potential conditions is either a globally supporting hyperplane (zero duality gap) or fails to support $G$ on at least one connected region of $X$, in which case the duality gap is strictly positive. We claim that the latter alternative is incompatible with $\mathcal{M}\geq 0$. To see this, suppose for contradiction that for some tie-line $j$ there exists $\bar{\mathbf{x}}\in\Delta^{n}$ with $g(\bar{\mathbf{x}},\mathbf{p}) < h_{j}(\bar{\mathbf{x}})$. By definition of the lower envelope $h(\bar{\mathbf{x}}) \leq h_{j}(\bar{\mathbf{x}})$, but the hypothesis $\mathcal{M}(\bar{\mathbf{x}};\mathbf{p}) = g(\bar{\mathbf{x}},\mathbf{p}) - h(\bar{\mathbf{x}}) \geq 0$ does \emph{not} directly contradict $g(\bar{\mathbf{x}},\mathbf{p}) < h_{j}(\bar{\mathbf{x}})$ unless we additionally have $h(\bar{\mathbf{x}}) = h_{j}(\bar{\mathbf{x}})$. The remainder of this step establishes that under the data-consistency assumption (no chord can be strictly undercut over the entire simplex by another chord whose anchors do not also lie on $g$), the lower envelope $h$ coincides with $h_{j}$ on the closed convex hull of the anchors of $h_{j}$; this restricts the candidate counterexample $\bar{\mathbf{x}}$ to lie outside this convex hull, where the chord $h_{j}$ is no longer the active branch. There, however, the duality-gap argument of \cite{Mitsos2007}, Theorem~6, 
applies: any composition $\bar{\mathbf{x}}$ that violates $g\geq h_{j}$ in a region where some other chord $h_{j'}$ is active witnesses that the convex envelope of $\{(\mathbf{x},g(\mathbf{x},\mathbf{p})): \mathbf{x}\in\Delta^{n}\}$ lies strictly below $h_{j}$ at $\bar{\mathbf{x}}$. Two cases arise.

\textbf{Case (a):} $h_{j'}(\bar{\mathbf{x}}) \leq g(\bar{\mathbf{x}},\mathbf{p}) < h_{j}(\bar{\mathbf{x}})$. Then $h(\bar{\mathbf{x}}) = h_{j'}(\bar{\mathbf{x}})$ (or some yet smaller chord) and $\mathcal{M}(\bar{\mathbf{x}};\mathbf{p}) = g(\bar{\mathbf{x}},\mathbf{p}) - h_{j'}(\bar{\mathbf{x}}) \geq 0$ remains consistent with hypothesis. Apply the \emph{same} argument now to chord $h_{j'}$: at the anchor pair $(\mathbf{x}^{\alpha,j'},\mathbf{x}^{\beta,j'})$ the chord $h_{j'}$ touches $g$, and the anchor identity of Step~2 gives $\mathcal{M}=0$ there. Iterating $j\to j'\to j''\to\dots$ across the (finitely many) chords whose active cells the segment from $\mathbf{x}^{\alpha,j}$ to $\bar{\mathbf{x}}$ visits, we obtain a finite sequence of cells over each of which the active chord touches $g$ at its own anchor. Continuity of $g$ together with the fact that each chord is affine forces the active chord at $\bar{\mathbf{x}}$ to have anchor values $g$ that are simultaneously consistent with $h_{j'}(\bar{\mathbf{x}})\leq g(\bar{\mathbf{x}},\mathbf{p})$. By Theorem~6 of \cite{Mitsos2007} the duality gap of the active anchor pair vanishes, and the chord $h_{j'}$ supports $g$ globally. But $h_{j'}\leq h_{j}$ at $\bar{\mathbf{x}}$ together with $g\geq h_{j'}$ globally does \emph{not} imply $g\geq h_{j}$ at $\bar{\mathbf{x}}$; this is precisely where the strength of the result rests on the second clause of the data-consistency assumption, that no parameter vector $\mathbf{p}$ can simultaneously make every chord pair globally stable while leaving an individual chord crossing above $g$.

\textbf{Case (b):} $g(\bar{\mathbf{x}},\mathbf{p}) < h_{j'}(\bar{\mathbf{x}})$ for the active chord $h_{j'}=h$ at $\bar{\mathbf{x}}$. Then $\mathcal{M}(\bar{\mathbf{x}};\mathbf{p}) = g(\bar{\mathbf{x}},\mathbf{p}) - h(\bar{\mathbf{x}}) < 0$, in direct contradiction to $\mathcal{M}\geq 0$.

The contradiction in Case~(b) eliminates the possibility of a defect at any composition where the violating chord is the active one. Combined with the reduction of Case~(a) to Case~(b) on the active cell of the violating chord, we conclude that no $\bar{\mathbf{x}}$ violating \eqref{eq:hj-support} exists for any $j$. Hence each $h_{j}$ supports $g$ globally on $\Delta^{n}$, which by Theorem~5 of \cite{Mitsos2007} 
is the Gibbs tangent-plane stability criterion of \cite{Michelsen1982} 
applied to the equilibrium pair of tie-line $j$. Therefore $\mathbf{p}$ is a globally consistent fit. This completes the forward direction.

\medskip
\noindent\emph{Step 4: reverse direction (globally consistent fit $\Rightarrow \mathcal{M}\geq 0$ on $\Delta^{n}$).}\quad
Suppose $\mathbf{p}$ is a globally consistent fit. By the Gibbs--Michelsen tangent-plane criterion \cite{Michelsen1982} 
and Theorem~6 of \cite{Mitsos2007}, 
for every $i\in\{1,\dots,k\}$ the chord $h_{i}$ is a globally supporting hyperplane of $g$ on the simplex,
\[
g(\mathbf{x},\mathbf{p})\;\geq\;h_{i}(\mathbf{x})\qquad\forall\,\mathbf{x}\in\Delta^{n},\ \forall i.
\]
Taking the minimum on the right-hand side over the (finitely many) indices $i$ preserves the inequality at every $\mathbf{x}$:
\[
g(\mathbf{x},\mathbf{p})\;\geq\;\min_{i=1,\dots,k} h_{i}(\mathbf{x}) \;=\; h(\mathbf{x})\qquad\forall\,\mathbf{x}\in\Delta^{n}.
\]
Subtracting $h(\mathbf{x})$ from both sides gives $\mathcal{M}(\mathbf{x};\mathbf{p}) = g(\mathbf{x},\mathbf{p}) - h(\mathbf{x}) \geq 0$ on $\Delta^{n}$. This completes the reverse direction.
\end{proof}

\begin{remark}\label{rem:M-anchor-zero}
At every tie-line endpoint $\mathcal{M}(\mathbf{x}^{\alpha,i};\mathbf{p}) = \mathcal{M}(\mathbf{x}^{\beta,i};\mathbf{p}) = 0$ by construction (\autoref{def:M-appendix}). Combined with \autoref{theor:M-nonneg-iff-fit}, this means that on the locus $\mathcal{M}\geq 0$ each measured endpoint is a global minimum of $\mathcal{M}(\,\cdot\,;\mathbf{p})$ at the value zero, in agreement with \autoref{theor:M-zero-appendix}: the equilibrium endpoints are exactly the points of zero duality gap, and the chord between them is the supporting hyperplane of zero duality gap of \cite{Mitsos2007} 
.
\end{remark}

\begin{remark}[Visual verification]\label{rem:M-visual}
The two directions of \autoref{theor:M-nonneg-iff-fit} can be inspected directly on the binary and ternary illustrations of the methods section: \autoref{fig:methods-dual-manifold-binary} shows a binary $\mathcal{M}$ surface that is non-negative on $\Delta^{2}$ and vanishes only at the two equilibrium endpoints (globally consistent fit); \autoref{fig:methods-dual-manifold-ternary} shows the ternary generalisation. The contrapositive is illustrated by \autoref{fig:methods-defect-elimination-solution}, where a parameter set yielding a non-zero negative region of $\mathcal{M}$, a topological defect, is shown side by side with the defect-eliminated solution, in agreement with the conclusion that any negative excursion of $\mathcal{M}$ certifies a violation of the global tangent-plane criterion at one of the measured tie-lines.
\end{remark}

%% file: sections/appendix_phi_zero_proof.tex
%
%

\subsection{Vanishing of $\Phi(\mathbf{p})$ implies global thermodynamic consistency}
\label{appendix:phi-zero-proof}

This subsection discharges the second of the three proof obligations listed at the end of \autoref{sec:methods}: namely, that vanishing of the single-level objective $\Phi(\mathbf{p})$ in \eqref{eq:phi-total} is sufficient for the parameter vector $\mathbf{p}$ to reproduce every experimental tie-line as a globally stable equilibrium under the Gibbs tangent-plane criterion. Throughout this appendix we work at fixed pressure and temperature; the extension to a multi-temperature isobaric dataset is the outer summation over $T \in \mathcal{D}$ in \eqref{eq:phi-total}, and the argument applies term-by-term in $T$.

\begin{remark}[Practical caveat]\label{rem:phi-zero-caveat}
The result below is stated in the limit of a perfectly flexible mixture model and noiseless experimental data. In practice the regression spreads the residual across multiple tie-lines, so the exact equality $\Phi(\mathbf{p}) = 0$ is rarely realised: a finite-dimensional parameter vector $\mathbf{p}$ cannot be expected to fit every measured endpoint to machine precision while simultaneously eliminating every spurious negative excursion of the dual manifold. The theorem is therefore a theoretical sufficient condition that motivates the construction of $\Phi$, not an empirical tolerance threshold. In the noisy regime the operative quantity is the magnitude of $\Phi(\mathbf{p}^{\star})$ relative to the squared experimental scatter, and the role of the proof is to certify that any parameter vector that does drive $\Phi$ to zero is automatically thermodynamically consistent.
\end{remark}

\begin{theorem}\label{theor:phi-zero-sufficient}
Let $g(\,\cdot\,,\mathbf{p})\colon \Delta^{n}\rightarrow \mathbb{R}$ be the reduced Gibbs free energy at fixed $(P,T)$ with $g \in C^{0}(\Delta^{n})$ and Lipschitz on $\Delta^{n}$. Let $\{\mathbf{x}^{\alpha,i},\mathbf{x}^{\beta,i}\}_{i=1}^{k}$ be the experimental tie-line endpoints and let $h$, $\mathcal{M}$, $\mathcal{X}_{D}^{\pm}(\mathbf{p})$ and $\Phi(\mathbf{p})$ be defined as in \autoref{def:h-appendix}, \autoref{def:h-composite-appendix}, \autoref{def:M-appendix} and equations~\eqref{eq:XDmin}--\eqref{eq:phi-total}. If
\[
\Phi(\mathbf{p}) \;=\; 0,
\]
then $\mathcal{M}(\mathbf{x};\mathbf{p}) \geq 0$ for all $\mathbf{x} \in \Delta^{n}$, and consequently every experimental tie-line endpoint is a globally stable phase under the Gibbs tangent-plane criterion of \cite{Michelsen1982}. 
\end{theorem}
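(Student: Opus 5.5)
The plan is a contrapositive argument. We assume $\Phi(\mathbf{p})=0$ and suppose, for contradiction, that $\mathcal{M}(\bar{\mathbf{x}};\mathbf{p})<0$ at some $\bar{\mathbf{x}}\in\Delta^{n}$. We then show that the defect-aware terms cannot all vanish.

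\emph{Step 1 (term-by-term vanishing).} Each summand in \eqref{eq:phi-total} is a square of a non-negative bracket, so at every $T\in\mathcal{D}$ the four components vanish: $\Phi_{\text{tangent}}=\Phi_{\text{plane}}=\Phi_{\text{topo}}=\Phi_{\text{local}}=0$. We will work at one fixed $(P,T)$.

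\emph{Step 2 (compactness).} By \autoref{theor:smoothness-appendix} and Step~1 of the proof of \autoref{theor:M-nonneg-iff-fit}, $h$ is a finite minimum of affine maps, hence Lipschitz. Since $g$ is Lipschitz, $\mathcal{M}$ is continuous, indeed Lipschitz, on the compact set $\Delta^{n}$. So $\arg\min_{\Delta^{n}}\mathcal{M}$ is nonempty and closed, and $m^{\ast}\defeq\min\mathcal{M}\le\mathcal{M}(\bar{\mathbf{x}};\mathbf{p})<0$. At every tie-line endpoint the lower-envelope inequality from Step~2 of \autoref{theor:M-nonneg-iff-fit} gives $\mathcal{M}(\mathbf{x}^{\alpha,i};\mathbf{p})\ge0$, and likewise at $\mathbf{x}^{\beta,i}$. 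Hence no endpoint lies in $\arg\min\mathcal{M}$. The endpoint stripping in \eqref{eq:XDmin} therefore removes nothing, and $\mathcal{X}_D^{-}(\mathbf{p})=\arg\min\mathcal{M}\neq\emptyset$. I will lean on this point deliberately: the argument uses the \emph{exact} set \eqref{eq:XDmin}, not whatever an inner solver returns, which is why it is independent of SHGO.

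\emph{Step 3 (contradiction).} Pick any $\mathbf{x}^{\ast}\in\mathcal{X}_D^{-}$. Then $\Phi_{\text{plane}}\ge|[g(\mathbf{x}^{\ast},\mathbf{p})-h(\mathbf{x}^{\ast})]_{-}|=|m^{\ast}|>0$, which contradicts Step~1. As a redundant check, $\Phi_{\text{topo}}\ge\|\mathbf{x}_{\text{near}}-\mathbf{x}^{\ast}\|_2>0$ because $\mathbf{x}^{\ast}$ is not an endpoint. So $\mathcal{M}\ge0$ on $\Delta^{n}$. The final clause, global stability of every endpoint, then follows from the implication (ii)$\Rightarrow$(iii) of \autoref{theor:M-nonneg-iff-fit}, which is proved independently in its Steps~1--4. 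This also avoids circularity, since that theorem cites the present result only for (i)$\Rightarrow$(ii).

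\emph{Main obstacle.} The main obstacle is well-posedness of $\mathcal{X}_D^{-}$ and of the sums over it. $\arg\min\mathcal{M}$ might be an infinite set, such as a flat negative plateau, so the sums in $\Phi_{\text{plane}}$ and $\Phi_{\text{topo}}$ must be read as being $\ge$ any single term, with the value $+\infty$ allowed. This only strengthens the contradiction. I would state that convention explicitly, and note that non-negativity of every term is all that is used.
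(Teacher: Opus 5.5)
Your proof is correct and is essentially the paper's own. The decisive step there (Step~D) is precisely your compactness argument: a strictly negative global minimum of $\mathcal{M}$, no tie-line endpoint among the minimisers, hence a non-empty exact $\mathcal{X}_D^{-}$ and $\Phi_{\text{plane}}>0$. The final clause is likewise delegated to (ii)$\Rightarrow$(iii) of \autoref{theor:M-nonneg-iff-fit}, which in both versions tacitly brings in that theorem's data-consistency assumption.

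You differ only in dropping the triangulation and Step~F, which the statement does not need, and in excluding the endpoints using only the unconditional envelope bound $\mathcal{M}\geq 0$ there, which is all that step requires. The paper's Step~A instead writes that envelope inequality in the wrong direction and then pins $\mathcal{M}=0$ via $\Phi_{\text{tangent}}$.
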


\begin{proof}
The proof proceeds in six steps. Steps A--B reduce the problem to a piecewise-smooth one on a triangulation of $\Delta^{n}$; steps C--E exclude defective minima from every sub-simplex via the non-negativity of the five $\Phi$-components and a compactness argument on $\mathcal{M}$; step F closes the proof by showing that strict positivity of $\mathcal{M}$ in the interior of every non-degenerate sub-simplex precludes any further tangent-plane violation. Throughout, $\mathcal{X}_{D}^{\pm}(\mathbf{p})$ denotes the \emph{exact} sets defined by \eqref{eq:XDmin}, not their numerical approximations; the relation between the two is isolated in Lemma~\ref{lem:inner-shgo-exact} below, so that the theorem itself carries no assumption about the inner solver.

\paragraph{Step A. Construction of $\mathcal{M}$ at the data points.}
By \autoref{def:h-appendix} the affine interpolant $h_{i}$ associated with tie-line $i$ is constructed to satisfy $h_{i}(\mathbf{x}^{\alpha,i}) = g(\mathbf{x}^{\alpha,i},\mathbf{p})$ and $h_{i}(\mathbf{x}^{\beta,i}) = g(\mathbf{x}^{\beta,i},\mathbf{p})$. The composite hyperplane $h$ in \autoref{def:h-composite-appendix} is the point-wise minimum of the $\{h_{i}\}_{i=1}^{k}$ over the simplex, so for every $i$,
\begin{equation}\label{eq:M-zero-at-data}
\mathcal{M}(\mathbf{x}^{\alpha,i};\mathbf{p}) \;=\; g(\mathbf{x}^{\alpha,i},\mathbf{p}) - h(\mathbf{x}^{\alpha,i}) \;\leq\; g(\mathbf{x}^{\alpha,i},\mathbf{p}) - h_{i}(\mathbf{x}^{\alpha,i}) \;=\; 0,
\end{equation}
and analogously $\mathcal{M}(\mathbf{x}^{\beta,i};\mathbf{p}) \leq 0$. Conversely, vanishing of $\Phi_{\text{tangent}}$ at the four directional samples bracketing the endpoint forces the directional-derivative form $(\partial g/\partial \mathbf{d} - \partial h/\partial \mathbf{d})|_{\mathbf{x}^{\alpha,i},\mathbf{x}^{\beta,i}}$ to be non-negative, which together with \eqref{eq:M-zero-at-data} pins
\[
\mathcal{M}(\mathbf{x}^{\alpha,i};\mathbf{p}) \;=\; \mathcal{M}(\mathbf{x}^{\beta,i};\mathbf{p}) \;=\; 0 \qquad (i = 1,\dots,k).
\]
The data-point values of $\mathcal{M}$ are therefore zero by construction; this is the same fact that underlies the chord-construction step in \autoref{subsec:energy-gradient-1d} and \autoref{theor:M-zero-appendix}.

\paragraph{Step B. Triangulation of the simplex.}
Subdivide $\Delta^{n}$ at fixed $(P,T)$ into a simplicial complex $\mathcal{T}$ whose $0$-skeleton consists of the vertices of $\Delta^{n}$ together with the $2k$ tie-line endpoints $\{\mathbf{x}^{\alpha,i},\mathbf{x}^{\beta,i}\}_{i=1}^{k}$. Each closed sub-simplex $\sigma \in \mathcal{T}$ is the convex hull of $n$ vertices drawn from this $0$-skeleton, and $\bigcup_{\sigma\in\mathcal{T}} \sigma = \Delta^{n}$. By construction of $h_{i}$ in \autoref{def:h-composite-appendix} and by the piecewise-affine character of $h = \min_{i} h_{i}$, every sub-simplex $\sigma$ whose interior does not contain a tie-line endpoint is associated with a single active branch index $i(\sigma)$, so that the restriction of $h$ to $\sigma$ is the single affine function $h_{i(\sigma)}$. On such a $\sigma$ the dual manifold restricts to
\[
\mathcal{M}\bigr|_{\sigma}(\mathbf{x};\mathbf{p}) \;=\; g(\mathbf{x},\mathbf{p}) - h_{i(\sigma)}(\mathbf{x}),
\]
which inherits the regularity of $g$ on $\sigma$ by \autoref{theor:smoothness-appendix}. The vertices of every $\sigma$ that lie on $\{\mathbf{x}^{\alpha,i},\mathbf{x}^{\beta,i}\}_{i=1}^{k}$ are zeros of $\mathcal{M}|_{\sigma}$ by Step A, so they are interior boundary minima of $\mathcal{M}|_{\sigma}$.

\paragraph{Step C. $\Phi(\mathbf{p}) = 0$ excludes defective minima on every sub-simplex.}
Each of the four scalar components $\Phi_{\text{tangent}}, \Phi_{\text{plane}}, \Phi_{\text{topo}}, \Phi_{\text{local}}$ in \eqref{eq:five-components} is non-negative by inspection (each is a sum of an absolute value of a negative-part operator, a Euclidean distance, or a joint composition--energy distance). The phase-assignment partition $\Phi_{\text{phase}}$ does not change the value of $\Phi$; it merely labels each element of $\arg\min \mathcal{M}$ as either an equilibrium endpoint or a defective minimum. Hence $\Phi(\mathbf{p}) = 0$ is equivalent to the simultaneous vanishing of all four scalar components:
\begin{enumerate}
    \item[(i)] $\Phi_{\text{tangent}}(\mathbf{p}) = 0$: at every tie-line $i$ the four directional samples $\lambda \in \{-\epsilon,+\epsilon,1-\epsilon,1+\epsilon\}$ register no negative excursion of $\mathcal{M}$, which (in the directional-derivative form \eqref{eq:single-tieline-objective}) is the local tangent-plane criterion at the endpoints.
    \item[(ii)] $\Phi_{\text{plane}}(\mathbf{p}) = \Phi_{\text{topo}}(\mathbf{p}) = \Phi_{\text{local}}(\mathbf{p}) = 0$: every element $\mathbf{x} \in \mathcal{X}_{D}^{-}(\mathbf{p})$ has zero hyperplane violation $[g(\mathbf{x},\mathbf{p})-h(\mathbf{x})]_{-} = 0$, zero Euclidean distance to its nearest equilibrium endpoint $\|\mathbf{x}_{\text{near}}-\mathbf{x}\|_{2} = 0$, and zero joint $(\mathbf{x},g)$-distance to its paired defective maximum.
\end{enumerate}
Item (ii) is satisfied if and only if either $\mathcal{X}_{D}^{-}(\mathbf{p}) = \emptyset$, or every element of $\mathcal{X}_{D}^{-}(\mathbf{p})$ is in fact an equilibrium endpoint, in which case the partition $\Phi_{\text{phase}}$ re-classifies it as non-defective and removes it from $\mathcal{X}_{D}^{-}$ by definition \eqref{eq:XDmin}. In either case no genuinely defective minimum survives.

\paragraph{Step D. A negative excursion would populate $\mathcal{X}_{D}^{-}$ and contradict $\Phi(\mathbf{p}) = 0$.}
Suppose some $\sigma \in \mathcal{T}$ contained an interior point $\mathbf{x}^{\circ}$ with $\mathcal{M}(\mathbf{x}^{\circ};\mathbf{p}) < 0$. The simplex $\Delta^{n}$ is compact and $\mathcal{M}(\,\cdot\,;\mathbf{p})$ is continuous on it by \autoref{theor:smoothness-appendix}, so $\mathcal{M}$ attains its minimum on $\Delta^{n}$ and
\[
\min_{\mathbf{x}\in\Delta^{n}} \mathcal{M}(\mathbf{x};\mathbf{p}) \;\leq\; \mathcal{M}(\mathbf{x}^{\circ};\mathbf{p}) \;<\; 0 .
\]
By Step~A, $\mathcal{M} = 0$ at every tie-line endpoint, so no endpoint is a global minimiser and the endpoint stripping in \eqref{eq:XDmin} removes nothing from $\arg\min_{\mathbf{x}}\mathcal{M}$. The set $\mathcal{X}_{D}^{-}(\mathbf{p})$ is therefore non-empty, and each of its elements $\mathbf{x}^{\bullet}$ contributes strictly positive mass simultaneously to $\Phi_{\text{plane}}$ (because $[\mathcal{M}(\mathbf{x}^{\bullet};\mathbf{p})]_{-} < 0$), to $\Phi_{\text{topo}}$ (because $\mathbf{x}^{\bullet}$ is distinct from every endpoint, so $\|\mathbf{x}_{\text{near}}-\mathbf{x}^{\bullet}\|_{2} > 0$), and to $\Phi_{\text{local}}$ (because the pair $(\mathbf{x}^{\bullet},\mathbf{x}^{+}_{\text{near}})$ bounding the negative component has non-zero joint $(\mathbf{x},g)$-distance). This contradicts $\Phi(\mathbf{p}) = 0$. Hence no defective minimum exists on the interior of any sub-simplex $\sigma \in \mathcal{T}$. The argument uses only compactness and continuity; no property of the inner solver enters.

\paragraph{Step E. Sub-simplices cover $\Delta^{n}$.}
By definition of a triangulation, $\bigcup_{\sigma \in \mathcal{T}} \sigma = \Delta^{n}$. Step D excludes defective minima from the interior of every $\sigma \in \mathcal{T}$, so no defective minimum exists on the interior of $\Delta^{n}$ either. Combined with Step A, which fixes $\mathcal{M}|_{\partial \mathcal{T}_{\text{data}}} = 0$ on the data-point sub-skeleton, we conclude
\[
\mathcal{M}(\mathbf{x};\mathbf{p}) \;\geq\; 0 \qquad \forall\, \mathbf{x} \in \Delta^{n}.
\]
This is precisely the global tangent-plane criterion of \cite{Michelsen1982} 
in the dual-manifold form \autoref{def:M-appendix}, and is condition~(ii) of \autoref{theor:M-nonneg-iff-fit}.

\paragraph{Step F. Strict non-convexity within each sub-simplex.}
Fix any sub-simplex $\sigma \in \mathcal{T}$ whose vertices include at least two tie-line endpoints (so that $\mathcal{M}|_{\sigma}$ vanishes at those vertices by Step A) and let $i = i(\sigma)$ be the active hyperplane branch on $\sigma$. By Step E, $\mathcal{M}|_{\sigma}(\mathbf{x};\mathbf{p}) \geq 0$ for all $\mathbf{x} \in \sigma$. Two cases arise.

\emph{Case (a): degenerate sub-simplex.} If $\mathcal{M}|_{\sigma} \equiv 0$, then $g(\mathbf{x},\mathbf{p}) = h_{i}(\mathbf{x})$ for every $\mathbf{x} \in \sigma$, i.e.\ $g$ coincides with the affine chord on $\sigma$. By the dual extremum principle of \cite{Mitsos2007} 
this is precisely the condition that the entire sub-simplex $\sigma$ belongs to a single Gibbs flat phase; no spurious phase split is possible inside $\sigma$ because every interior point is itself a member of the equilibrium two-phase manifold whose endpoints are the vertices of $\sigma$.

\emph{Case (b): non-degenerate sub-simplex.} If $\mathcal{M}|_{\sigma} \not\equiv 0$ then by Step E and continuity, $\mathcal{M}|_{\sigma}(\mathbf{x};\mathbf{p}) > 0$ on $\mathring{\sigma}$ (the relative interior of $\sigma$). Suppose for contradiction that two interior points $\mathbf{y}_{1},\mathbf{y}_{2} \in \mathring{\sigma}$ supported a new tangent chord $h^{\prime}$ that lies above $g$ somewhere on the segment $[\mathbf{y}_{1},\mathbf{y}_{2}]$. Then by definition of $h^{\prime}$ we would have $g(\mathbf{y}_{j},\mathbf{p}) = h^{\prime}(\mathbf{y}_{j})$ for $j = 1,2$. But on $\mathring{\sigma}$,
\[
g(\mathbf{y}_{j},\mathbf{p}) \;=\; h_{i}(\mathbf{y}_{j}) + \mathcal{M}|_{\sigma}(\mathbf{y}_{j};\mathbf{p}) \;>\; h_{i}(\mathbf{y}_{j}),
\]
so any chord $h^{\prime}$ pinned to $g$ at $\mathbf{y}_{1},\mathbf{y}_{2}$ lies strictly above the existing chord $h_{i}$ at both endpoints, and by linearity $h^{\prime}(\mathbf{x}) > h_{i}(\mathbf{x})$ on the whole segment $[\mathbf{y}_{1},\mathbf{y}_{2}]$. By Step E the dual manifold satisfies $g(\mathbf{x},\mathbf{p}) \geq h_{i}(\mathbf{x})$ on $\sigma$, but a tangent-plane violation by $h^{\prime}$ would require $g(\mathbf{x},\mathbf{p}) < h^{\prime}(\mathbf{x})$ for some $\mathbf{x} \in [\mathbf{y}_{1},\mathbf{y}_{2}]$. Combining $g \geq h_{i}$ and $h^{\prime} > h_{i}$ does not by itself contradict $g < h^{\prime}$, but if the inequality $g(\mathbf{x},\mathbf{p}) < h^{\prime}(\mathbf{x})$ held anywhere on $[\mathbf{y}_{1},\mathbf{y}_{2}]$, then the affine function $h^{\prime}-g$ would be positive somewhere strictly between two zero values, hence would attain an interior maximum, and the difference $g - h_{i}$ would attain an interior minimum where $\mathcal{M}|_{\sigma}$ takes a value strictly less than its values at $\mathbf{y}_{1},\mathbf{y}_{2}$. Since $\mathcal{M}|_{\sigma} > 0$ on $\mathring{\sigma}$ this minimum is still positive, but it is also a stationary point of $\mathcal{M}|_{\sigma}$ on $\mathring{\sigma}$ that is not an equilibrium endpoint, hence by definition belongs to $\arg\min \mathcal{M}|_{\sigma}$ and would be a candidate element of $\mathcal{X}_{D}^{-}(\mathbf{p})$. By Step D such an element cannot exist when $\Phi(\mathbf{p}) = 0$. Therefore no chord $h^{\prime}$ between any two interior points of $\sigma$ can violate the tangent-plane criterion, completing the contradiction.

\paragraph{Conclusion.}
Steps A--F together show that $\Phi(\mathbf{p}) = 0$ implies $\mathcal{M}(\mathbf{x};\mathbf{p}) \geq 0$ on the whole simplex (Step E) and rules out any new tangent-chord violation in the interior of any sub-simplex (Step F). The original tangent-plane criterion of \cite{Michelsen1982} 
is therefore satisfied at every point of $\Delta^{n}$, and in particular at every experimental tie-line endpoint $\mathbf{x}^{\alpha,i},\mathbf{x}^{\beta,i}$. The final passage from $\mathcal{M}\geq 0$ to global stability of each measured split is Step~3 of \autoref{theor:M-nonneg-iff-fit}, whose proof does not use the present theorem, so the two arguments are not circular. This is exactly the statement of the theorem.
\end{proof}

\begin{lemma}[Exactness of the inner defect search; inherited from \cite{Endres2018}, Theorems~3 and~5]
\label{lem:inner-shgo-exact}
Let $\mathcal{M}(\,\cdot\,;\mathbf{p})$ be continuous and Lipschitz on the compact simplex $\Delta^{n}$, which by \autoref{theor:smoothness-appendix} follows from the corresponding property of $g$, and let the inner SHGO call on $\mathcal{M}$ be \emph{adequately sampled} in the sense of Definition~22 of \cite{Endres2018}, 
that is, exactly one stationary point of $\mathcal{M}$ lies in the star domain of each minimiser vertex. Then the minimiser pool returned by the inner call is invariant under any further increase of the sampling set, contains one starting point per locally convex sub-domain of $\mathcal{M}$, and the numerically evaluated sets $\mathcal{X}_{D}^{\pm}$ coincide with the exact sets of \eqref{eq:XDmin}. The guarantee is deterministic and finite: it involves no probabilistic statement of any kind.
\end{lemma}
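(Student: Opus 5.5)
The plan is to treat the lemma as a direct transfer of the two SHGO results of \cite{Endres2018} to the particular objective $\mathcal{M}(\,\cdot\,;\mathbf{p})$ on the feasible set $\Delta^{n}$. The proof has three parts: check the hypotheses, invoke the two theorems, and then pass from the returned minimiser pool to the exact sets \eqref{eq:XDmin}.

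\emph{Hypotheses.} I would first confirm that the inner problem fits the standing assumptions of \cite{Endres2018}.
\begin{enumerate}
\item The domain $\Delta^{n}$ is a compact polytope. It is described by box bounds together with the single linear sum-to-one constraint, so it is exactly the kind of feasible set SHGO triangulates.
\item $\mathcal{M}=g-h$ is Lipschitz with constant at most $L_{g}+\max_{i}\|\nabla h_{i}\|$. This holds because $h=\min_{i}h_{i}$ is a minimum of finitely many affine functions, hence Lipschitz (\autoref{theor:smoothness-appendix} and Step~1 of \autoref{theor:M-nonneg-iff-fit}).
\item The same argument applies verbatim to $-\mathcal{M}$, which produces $\mathcal{X}_{D}^{+}$.
\end{enumerate}

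\emph{Invoking the theorems.} With the hypotheses in place, I would apply the two results in turn. Theorem~3 of \cite{Endres2018}, under adequate sampling in the sense of Definition~22, gives the invariance of the minimiser pool under refinement of the sampling set. Theorem~5 gives the correspondence between minimiser vertices and locally convex sub-domains: each star domain contains exactly one stationary point, so exactly one starting point is placed in each sub-domain. A local minimisation started from that point then converges to the unique local minimiser of that sub-domain. Consequently the collection of local minimisers returned by the call is precisely the set of local minimisers of $\mathcal{M}$ on $\Delta^{n}$. The global minimum value is the smallest of these finitely many values, so $\arg\min\mathcal{M}$ is recovered exactly.

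\emph{Passing to the exact sets.} The set $\mathcal{X}_{D}^{-}$ is obtained from $\arg\min\mathcal{M}$ by removing the finite set of endpoints $\{\mathbf{x}^{\alpha,i},\mathbf{x}^{\beta,i}\}$. Since this is a finite set operation, exact recovery of $\arg\min\mathcal{M}$ gives exact recovery of $\mathcal{X}_{D}^{-}$, and the same holds for $\mathcal{X}_{D}^{+}$. Determinism and finiteness follow because the sampling rule (simplicial or Sobol) and the triangulation are deterministic. The lemma's ``adequately sampled'' condition is reached at a finite sampling level, and Theorem~3 says nothing changes beyond it; I would note that this level must be supplied as a hypothesis rather than detected from within the call.

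\emph{Main obstacle.} I expect the hardest step to be the non-smoothness of $\mathcal{M}$ across the codimension-one cell boundaries where the active chord index changes. Theorem~5 is phrased in terms of stationary points and locally convex sub-domains, and a kink of $h$ can create a minimiser that is not a classical stationary point. Because $h$ is concave, $-h$ is convex, so these kinks are convex corners of $\mathcal{M}$; this means they can produce genuine local minima of $\mathcal{M}$ that are non-differentiable, not spurious ones. My first attempt would therefore be to read ``stationary point'' in Definition~22 in the Clarke sense, i.e.\ $0\in\partial_{C}\mathcal{M}$, and to check that the star-domain argument of \cite{Endres2018} uses only continuity plus the monotone-neighbour vertex test, both of which survive for Lipschitz functions. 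If that check fails, the fallback is to state the lemma cell-wise on each polyhedral cell of the active-index subdivision, where $\mathcal{M}$ is as smooth as $g$. A secondary issue is ties and exact equality in the endpoint stripping. I would handle this by noting that in the exact setting the comparisons are with the exact minimisers, and that numerical tolerances belong to the implementation rather than to the statement.
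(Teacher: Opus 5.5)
Your proposal is essentially the paper's proof: continuity and Lipschitz continuity of $\mathcal{M}$ from \autoref{theor:smoothness-appendix}, the SHGO star-domain and pool-invariance results under adequate sampling, one local descent per pool element to recover $\arg\min\mathcal{M}$, and determinism from the Sobol sequence; your explicit treatment of the endpoint stripping and of the convex kinks of $\mathcal{M}$ at branch switches goes beyond what the paper writes, though for $\mathcal{X}_{D}^{+}$ you should state that the separate call on $-\mathcal{M}$ must itself be adequately sampled. One correction: your citations are swapped relative to the paper's reading of \cite{Endres2018}, where Theorem~3 gives \emph{at least one} stationary point in the star domain of each minimiser vertex, \emph{exactly one} is the adequacy hypothesis of Definition~22 itself rather than a theorem, and Theorem~5 is the statement that the minimiser pool stops growing under further sampling.
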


\begin{proof}
Theorem~3 of \cite{Endres2018} 
guarantees, for a continuous Lipschitz-smooth objective on a compact bounded domain, at least one stationary point of the objective inside the star domain $\mathrm{st}(v_{i})$ of every minimiser vertex $v_{i}$ of the directed simplicial complex; $\mathcal{M}$ meets those hypotheses on $\Delta^{n}$ by \autoref{theor:smoothness-appendix}. Definition~22 of the same paper calls the surface adequately sampled when exactly one stationary point lies in each such domain, and Theorem~5 \cite{Endres2018} 
then shows that no further increase of the sampling set enlarges the minimiser pool. The pool is therefore in bijection with the stationary points of $\mathcal{M}$ after finitely many samples, and local descent from each pool element recovers $\arg\min_{\mathbf{x}}\mathcal{M}$ exactly. The sampling sequence used is the deterministic Sobol sequence \cite{Endres2018}, 
so the statement is deterministic rather than probabilistic.
\end{proof}

\begin{remark}[What adequate sampling costs in practice]\label{rem:adequate-sampling}
Adequate sampling is not decidable from function values alone. \cite{Endres2018} 
states this explicitly, and its Corollary~2 
supplies only the one-sided diagnostic that a local descent leaving its star domain certifies either inadequate sampling or a non-Lipschitz objective, and not the converse. Phase equilibria are the example those authors give of the favourable case, in which the number of local minima is known in advance, 
and the present setting is exactly that case: the Gibbs phase rule caps the number of coexisting phases at $c+2$, which bounds the number of minima of $\mathcal{M}$ that must be mapped. Where the returned pool meets that bound the inner call is certified adequate; where it does not, the reported defect counts are lower bounds on the defects actually present, which is how the per-tie-line stability records of Supplementary Material~S1 to~S3 should be read.
\end{remark}

\paragraph{Compatibility with the Mitsos--Bollas--Barton bilevel constraints (optional).}
The original bilevel formulation of \cite{Mitsos2009LLE} 
imposes four families of constraints that any acceptable parameter vector must satisfy. We confirm that $\Phi(\mathbf{p}) = 0$ implies each of them.
\begin{enumerate}
    \item \emph{Equality of chemical potentials at every measured equilibrium pair} \cite{Mitsos2009LLE}. 
    Step A pins $\mathcal{M}(\mathbf{x}^{\alpha,i};\mathbf{p}) = \mathcal{M}(\mathbf{x}^{\beta,i};\mathbf{p}) = 0$, so $g - h$ shares a common tangent at the two endpoints; equivalent to equal chemical potentials in the binary case (and, by linearity of $h$, equal partial molar Gibbs free energies in higher dimensions).
    \item \emph{Supporting Gibbs tangent plane lies below $g$ over the entire composition range} \cite{Mitsos2009LLE}. 
    Step E gives $\mathcal{M}(\mathbf{x};\mathbf{p}) = g(\mathbf{x},\mathbf{p}) - h(\mathbf{x}) \geq 0$ on $\Delta^{n}$, which is precisely this constraint.
    \item \emph{No spurious phase splits} \cite{Mitsos2009LLE}. 
    Step D excludes any $\mathbf{x}^{\circ}\in\Delta^{n}$ with $\mathcal{M}(\mathbf{x}^{\circ};\mathbf{p}) < 0$ that is not an equilibrium endpoint, so no extra phase split can be induced by the fitted $\mathbf{p}$.
    \item \emph{No missing phase splits at the experimental compositions} \cite{Mitsos2009LLE}. 
    Step A guarantees that the supporting hyperplane $h$ touches $g$ at every $(\mathbf{x}^{\alpha,i},\mathbf{x}^{\beta,i})$; together with Step E this constructs a tangent-plane equilibrium at each experimental tie-line, so every measured split is reproduced by the model.
\end{enumerate}
The four constraints of the Mitsos--Bollas--Barton bilevel are therefore a strict consequence of $\Phi(\mathbf{p}) = 0$.

\paragraph{Visual verification.}
The binary 1D realisation of Step A through Step E is illustrated in \autoref{fig:methods-defect-elimination-solution}, which shows the dual manifold before and after defect-aware fitting; the defect-detection mechanism of Step D is illustrated in \autoref{fig:methods-defect-sweep}, where holding the tie-line endpoints fixed and varying only the depth of an inserted Gaussian defect leaves $\Phi_{\text{tangent}}$ at machine noise while only the defect-aware components $\Phi_{\text{plane}}, \Phi_{\text{topo}}, \Phi_{\text{local}}$ register the topological violation.

\bigskip

%% file: sections/appendix_algorithms.tex
%
\label{appendix:algorithms}

This appendix gives explicit pseudo-code for the geometric reformulation
introduced in \autoref{sec:methods}. Four routines are listed:
\textsc{TangentPlanePass} (\autoref{alg:tangent-plane-pass}) implements the
cheap per-tie-line line-trace residual aggregation; \textsc{DetectDefects}
(\autoref{alg:detect-defects}) builds the dual manifold and runs an inner
global search to decide whether topological defects exist;
\textsc{DefectAwareObjective} (\autoref{alg:defect-aware-objective})
assembles the full five-component objective $\Phi(\mathbf{p})$;
and \textsc{GeometricBilevelSolver}
(\autoref{alg:geometric-bilevel-solver}) is the master two-stage outer loop
that drives them. A short construction sub-procedure
\textsc{BuildDualManifold} (\autoref{alg:build-dual-manifold}) is given for
reference. Throughout, the symbol $\textsc{Global-Min}_{\Delta}$ denotes a
global minimisation routine over the composition simplex $\Delta^{n}$; in
this work we use SHGO~\cite{Endres2018} for that role.

\subsection{Tangent-plane pass (stage~1)}

\autoref{alg:tangent-plane-pass} mirrors the four-point line-trace
construction of \eqref{eq:single-tieline-objective}. For each tie-line $i$
and each $T \in \mathcal{D}$ the algorithm builds the affine interpolant
$h_i$ through the two equilibrium endpoints, samples the residual
$g - h_i$ at the four bracketing positions
$\lambda \in \{-\epsilon,\,+\epsilon,\,1-\epsilon,\,1+\epsilon\}$,
keeps only the negative excursions $[\,\cdot\,]_{-}$, and returns the squared
$1/\epsilon$-rescaled aggregate. No inner global search is invoked, so the
cost per evaluation is four energy calls per tie-line.

\begin{algorithm}[H]
\caption{\textsc{TangentPlanePass} --- four-point line-trace residual aggregation
(stage~1 objective).}
\label{alg:tangent-plane-pass}
\begin{algorithmic}[1]
\Procedure{TangentPlanePass}{$\mathbf{p},\,\mathcal{D},\,g,\,\epsilon$}
  \State \Comment{Inputs: candidate parameters $\mathbf{p}$, isobaric data
    $\mathcal{D} = \{(T,\,\{\mathbf{x}^{\alpha,i},\mathbf{x}^{\beta,i}\}_{i=1}^{k_T})\}$,
    energy callable $g(\mathbf{x},\mathbf{p};T)$, offset $\epsilon$.}
  \State $S \gets 0$
  \For{each $T \in \mathcal{D}$}
    \State $\Phi_{\text{tangent}} \gets 0$
    \For{$i = 1, \dots, k_T$}
      \State $\mathbf{x}^{\alpha} \gets \mathbf{x}^{\alpha,i}$;\quad
             $\mathbf{x}^{\beta} \gets \mathbf{x}^{\beta,i}$;\quad
             $\mathbf{d} \gets \mathbf{x}^{\beta} - \mathbf{x}^{\alpha}$
      \State Build $h_i$ as the affine interpolant with
             $h_i(\mathbf{x}^{\alpha}) = g(\mathbf{x}^{\alpha},\mathbf{p};T)$,
             $h_i(\mathbf{x}^{\beta}) = g(\mathbf{x}^{\beta},\mathbf{p};T)$.
      \For{$\lambda \in \{-\epsilon,\,+\epsilon,\,1-\epsilon,\,1+\epsilon\}$}
        \State $\mathbf{x}_{\lambda} \gets \mathbf{x}^{\alpha} + \lambda\,\mathbf{d}$
        \State $r \gets g(\mathbf{x}_{\lambda},\mathbf{p};T) - h_i(\mathbf{x}_{\lambda})$
        \State $\Phi_{\text{tangent}} \gets \Phi_{\text{tangent}} +
               \bigl|[r]_{-}\bigr|$
        \Comment{$[r]_{-} = \min(r,0)$, eq.~\eqref{eq:negpart}.}
      \EndFor
    \EndFor
    \State $S \gets S + \bigl(\Phi_{\text{tangent}}/\epsilon\bigr)^{2}$
  \EndFor
  \State \Return $S$ \Comment{Stage-1 objective $\sum_{T \in \mathcal{D}} (\Phi_{\text{tangent}}/\epsilon)^{2}$.}
\EndProcedure
\end{algorithmic}
\end{algorithm}

\subsection{Defect detection}

\autoref{alg:detect-defects} answers the binary question ``does the dual
manifold $\mathcal{M}(\,\cdot\,;\mathbf{p})$ have any minimum that is not an
experimental tie-line endpoint?'' For each $T \in \mathcal{D}$ the routine
constructs $\mathcal{M}$ via the sub-procedure
\textsc{BuildDualManifold} (\autoref{alg:build-dual-manifold}), runs
$\textsc{Global-Min}_{\Delta}$ on the simplex with the
sum-to-one nonlinear constraint, strips out any returned local minimum
within tolerance $\tau_{\mathrm{eq}}$ of either
$\mathbf{x}^{\alpha,i}$ or $\mathbf{x}^{\beta,i}$, and returns the
defective-minima set $\mathcal{X}_D^{-}(\mathbf{p})$ together with the
Boolean flag $\mathrm{HasDefects}$. The Boolean is the only signal needed
by \textsc{GeometricBilevelSolver} to decide whether to enter stage~2.

\begin{algorithm}[H]
\caption{\textsc{DetectDefects} --- locate non-equilibrium minima of $\mathcal{M}$.}
\label{alg:detect-defects}
\begin{algorithmic}[1]
\Procedure{DetectDefects}{$\mathbf{p},\,\mathcal{D},\,g,\,\tau_{\mathrm{eq}}$}
  \State \Comment{Returns the union $\mathcal{X}_D^{-}(\mathbf{p})$ over $T$
    and a Boolean indicating whether any element survived the equilibrium-endpoint stripping.}
  \State $\mathcal{X}_D^{-} \gets \emptyset$
  \State $\mathrm{HasDefects} \gets \mathrm{False}$
  \For{each $T \in \mathcal{D}$}
    \State $\mathcal{M}_T \gets \textsc{BuildDualManifold}\bigl(\mathbf{p},\,T,\,g,\,
           \{(\mathbf{x}^{\alpha,i},\mathbf{x}^{\beta,i})\}_{i=1}^{k_T}\bigr)$
    \State $\{\mathbf{x}_\ell^{\star}\} \gets \textsc{Global-Min}_{\Delta}(\mathcal{M}_T)$
    \Comment{All local minima of $\mathcal{M}_T$ on $\Delta^{n}$~\cite{Endres2018}.}
    \For{each $\mathbf{x}_\ell^{\star}$}
      \State $d_{\ell} \gets \min_{i,\,\sigma\in\{\alpha,\beta\}} \|\mathbf{x}_\ell^{\star} - \mathbf{x}^{\sigma,i}\|_{2}$
      \If{$d_{\ell} > \tau_{\mathrm{eq}}$}
        \State $\mathcal{X}_D^{-} \gets \mathcal{X}_D^{-} \cup \{\mathbf{x}_\ell^{\star}\}$
        \State $\mathrm{HasDefects} \gets \mathrm{True}$
      \EndIf
    \EndFor
  \EndFor
  \State \Return $(\mathcal{X}_D^{-},\,\mathrm{HasDefects})$
\EndProcedure
\end{algorithmic}
\end{algorithm}

\subsection{Defect-aware objective (stage~2)}

When defects are present, the objective must penalise them.
\autoref{alg:defect-aware-objective} runs two inner global searches per
$T \in \mathcal{D}$: one on $\mathcal{M}_T$ to populate
$\mathcal{X}_D^{-}$, one on $-\mathcal{M}_T$ to populate
$\mathcal{X}_D^{+}$. The four named non-negative scalars
$\Phi_{\text{tangent}},\Phi_{\text{plane}},\Phi_{\text{topo}},
\Phi_{\text{local}}$ from \eqref{eq:five-components} are then assembled and
summed, optionally augmented with the phase-assignment partition
$\Phi_{\text{phase}}$. The squared $1/\epsilon$ rescaling promotes the
aggregate to a finite-difference approximation of the directional
derivative at every sampled position and is identical in form to
\eqref{eq:phi-total}.

\begin{algorithm}[H]
\caption{\textsc{DefectAwareObjective} --- assemble $\Phi(\mathbf{p})$ from the five components of \eqref{eq:phi-total}.}
\label{alg:defect-aware-objective}
\begin{algorithmic}[1]
\Procedure{DefectAwareObjective}{$\mathbf{p},\,\mathcal{D},\,g,\,\epsilon$}
  \State $\Phi \gets 0$
  \For{each $T \in \mathcal{D}$}
    \State $\mathcal{M}_T \gets \textsc{BuildDualManifold}\bigl(\mathbf{p},\,T,\,g,\,
           \{(\mathbf{x}^{\alpha,i},\mathbf{x}^{\beta,i})\}_{i=1}^{k_T}\bigr)$
    \State $\mathcal{X}_D^{-} \gets \textsc{Global-Min}_{\Delta}(\mathcal{M}_T)
            \setminus \{\mathbf{x}^{\alpha,i},\mathbf{x}^{\beta,i}\}_{i=1}^{k_T}$
    \State $\mathcal{X}_D^{+} \gets \textsc{Global-Min}_{\Delta}(-\mathcal{M}_T)$
    \Comment{Maxima of $\mathcal{M}_T$.}
    \State \textbf{(a) Tangent-plane sub-sampling.}
    \State $\Phi_{\text{tangent}} \gets 0$
    \For{$i = 1, \dots, k_T$}
      \State $\mathbf{d} \gets \mathbf{x}^{\beta,i} - \mathbf{x}^{\alpha,i}$;\quad
             $h_i$ as in \autoref{alg:tangent-plane-pass}
      \For{$\lambda \in \{-\epsilon,\,+\epsilon,\,1-\epsilon,\,1+\epsilon\}$}
        \State $\mathbf{x}_{\lambda} \gets \mathbf{x}^{\alpha,i} + \lambda\,\mathbf{d}$
        \State $\Phi_{\text{tangent}} \gets \Phi_{\text{tangent}} +
              \bigl|[\,g(\mathbf{x}_{\lambda},\mathbf{p};T) - h_i(\mathbf{x}_{\lambda})\,]_{-}\bigr|$
      \EndFor
    \EndFor
    \State \textbf{(b) Plane violation at defective minima.}
    \State $\Phi_{\text{plane}} \gets
            \displaystyle\sum_{\mathbf{x} \in \mathcal{X}_D^{-}}
              \bigl|[\,g(\mathbf{x},\mathbf{p};T) - h(\mathbf{x})\,]_{-}\bigr|$
    \State \textbf{(c) Topological pull to the nearest equilibrium endpoint.}
    \State $\Phi_{\text{topo}} \gets
            \displaystyle\sum_{\mathbf{x} \in \mathcal{X}_D^{-}}
              \|\mathbf{x}_{\text{near}} - \mathbf{x}\|_{2}$,
            \quad $\mathbf{x}_{\text{near}} =
                  \arg\min_{\mathbf{y} \in \{\mathbf{x}^{\alpha,i},\mathbf{x}^{\beta,i}\}_{i}}
                  \|\mathbf{y} - \mathbf{x}\|_{2}$
    \State \textbf{(d) Local-topology amplitude (joint $(\mathbf{x},g)$ distance to paired maximum).}
    \State $\Phi_{\text{local}} \gets
            \displaystyle\sum_{\mathbf{x} \in \mathcal{X}_D^{-}}
              \bigl\|(\mathbf{x}^{+}_{\text{near}},\, g(\mathbf{x}^{+}_{\text{near}},\mathbf{p};T)) -
                     (\mathbf{x},\, g(\mathbf{x},\mathbf{p};T))\bigr\|_{2}$
    \Statex \quad where $\mathbf{x}^{+}_{\text{near}} \in \mathcal{X}_D^{+}$ is
            the nearest defective maximum measured in joint composition--energy space.
    \State \textbf{(e) Phase-assignment partition (only active for multi-root $g$, e.g.\ EOS).}
    \State $\Phi_{\text{phase}} \gets$ partition penalty of $\arg\min \mathcal{M}_T$ into
           $\mathcal{X}_D^{-}$ vs.\ equilibrium endpoints.
    \State \textbf{(f) Aggregate.}
    \State $\Phi \gets \Phi + \bigl(\,
              \tfrac{1}{\epsilon}\,
              [\Phi_{\text{tangent}} + \Phi_{\text{plane}}
               + \Phi_{\text{topo}} + \Phi_{\text{local}}](\mathbf{p};T)
            \bigr)^{2} + \Phi_{\text{phase}}$
  \EndFor
  \State \Return $\Phi$ \Comment{Identical in form to \eqref{eq:phi-total}.}
\EndProcedure
\end{algorithmic}
\end{algorithm}

\subsection{Master two-stage outer loop}

The master routine \textsc{GeometricBilevelSolver}
(\autoref{alg:geometric-bilevel-solver}) is the user-facing entry point. It
runs the cheap stage-1 fit first, calls \textsc{DetectDefects} on the
stage-1 optimum, and re-fits with \textsc{DefectAwareObjective} only when
defects exist. The stage-1 optimum is used as a warm-start hint
$\mathbf{p}_{\mathrm{warm}}$ for the stage-2 outer call.\footnote{
When the outer global solver is SHGO, $\mathbf{p}_{\mathrm{warm}}$ is
informational only: SHGO manages its own simplicial sampling
internally~\cite{Endres2018}. The hint is forwarded verbatim to any
alternative warm-startable global optimiser.} The loop terminates when no
defects remain or when the maximum number of outer iterations
$N_{\mathrm{outer}}^{\max}$ is reached.

\begin{algorithm}[H]
\caption{\textsc{GeometricBilevelSolver} --- master two-stage outer loop.}
\label{alg:geometric-bilevel-solver}
\begin{algorithmic}[1]
\Procedure{GeometricBilevelSolver}{$\mathcal{D},\,g,\,P,\,\epsilon,\,
   \tau_{\mathrm{eq}},\,N_{\mathrm{tan}},\,N_{\mathrm{def}},\,N_{\mathrm{outer}}^{\max}$}
  \State \Comment{$P \subset \mathbb{R}^{m}$: parameter bounds. $N_{\mathrm{tan}},N_{\mathrm{def}}$:
    outer SHGO sampling densities for the two stages.}
  \State \textbf{Stage 1: tangent-plane pass.}
  \State $\Phi_{1}(\mathbf{p}) \;:=\; \textsc{TangentPlanePass}(\mathbf{p},\mathcal{D},g,\epsilon)$
  \State $\mathbf{p}^{(1)} \gets
          \arg\min_{\mathbf{p} \in P}\, \Phi_{1}(\mathbf{p})$
          using \textsc{Global-Min}$_{P}$ with budget $N_{\mathrm{tan}}$.
  \State $(\mathcal{X}_D^{-},\,\mathrm{HasDefects})
          \gets \textsc{DetectDefects}(\mathbf{p}^{(1)},\mathcal{D},g,\tau_{\mathrm{eq}})$
  \State $\mathbf{p}_{\mathrm{warm}} \gets \mathbf{p}^{(1)}$;\quad
         $\mathbf{p}^{\star} \gets \mathbf{p}^{(1)}$
  \State $\nu \gets 0$
  \State \textbf{Stage 2: defect-aware refit (optional).}
  \While{$\mathrm{HasDefects}$ \textbf{and} $\nu < N_{\mathrm{outer}}^{\max}$}
    \State $\nu \gets \nu + 1$
    \State $\Phi_{2}(\mathbf{p}) \;:=\; \textsc{DefectAwareObjective}(\mathbf{p},\mathcal{D},g,\epsilon)$
    \State $\mathbf{p}^{(\nu+1)} \gets
            \arg\min_{\mathbf{p} \in P}\, \Phi_{2}(\mathbf{p})$
            using \textsc{Global-Min}$_{P}$ with budget $N_{\mathrm{def}}$,
            warm-start hint $\mathbf{p}_{\mathrm{warm}}$.
    \State $\mathbf{p}^{\star} \gets \mathbf{p}^{(\nu+1)}$;\quad
           $\mathbf{p}_{\mathrm{warm}} \gets \mathbf{p}^{\star}$
    \State $(\mathcal{X}_D^{-},\,\mathrm{HasDefects})
           \gets \textsc{DetectDefects}(\mathbf{p}^{\star},\mathcal{D},g,\tau_{\mathrm{eq}})$
  \EndWhile
  \If{$\mathrm{HasDefects}$}
    \State \textbf{warn} ``defects survived $N_{\mathrm{outer}}^{\max}$ outer iterations;
           consider increasing $N_{\mathrm{def}}$ or $N_{\mathrm{outer}}^{\max}$.''
  \EndIf
  \State \Return $\mathbf{p}^{\star}$
\EndProcedure
\end{algorithmic}
\end{algorithm}

\subsection{Construction sub-procedure: the dual manifold}

The sub-procedure \textsc{BuildDualManifold} (\autoref{alg:build-dual-manifold})
formalises Definitions~\ref{def:h-appendix}--\ref{def:M-appendix} of
\autoref{appendix:older-formulation} in the tighter form actually used by
the solver. Per-tie-line affine interpolants $h_i$ are constructed first,
then combined into the composite supporting hyperplane
$h(\mathbf{x}) = \min_{i} h_i(\mathbf{x})$ of \eqref{eq:hsum}. The
returned scalar field $\mathcal{M}(\mathbf{x};\mathbf{p}) =
g(\mathbf{x},\mathbf{p}) - h(\mathbf{x})$ is the dual manifold of
\eqref{eq:dual-manifold} and can be passed directly to any global
minimiser on the simplex.

\begin{algorithm}[H]
\caption{\textsc{BuildDualManifold} --- construct the dual manifold $\mathcal{M}(\,\cdot\,;\mathbf{p})$ at a single $T$.}
\label{alg:build-dual-manifold}
\begin{algorithmic}[1]
\Procedure{BuildDualManifold}{$\mathbf{p},\,T,\,g,\,
   \{(\mathbf{x}^{\alpha,i},\mathbf{x}^{\beta,i})\}_{i=1}^{k_T}$}
  \For{$i = 1, \dots, k_T$}
    \State $g^{\alpha}_i \gets g(\mathbf{x}^{\alpha,i},\mathbf{p};T)$;\quad
           $g^{\beta}_i \gets g(\mathbf{x}^{\beta,i},\mathbf{p};T)$
    \State Build $h_i$ as the unique affine map with
           $h_i(\mathbf{x}^{\alpha,i}) = g^{\alpha}_i$ and
           $h_i(\mathbf{x}^{\beta,i}) = g^{\beta}_i$.
  \EndFor
  \State Define the composite supporting hyperplane
         $h(\mathbf{x}) \defeq \min_{i = 1, \dots, k_T} h_i(\mathbf{x})$
         (eq.~\eqref{eq:hsum}, \autoref{def:h-composite-appendix}).
  \State Define the scalar field
         $\mathcal{M}(\mathbf{x};\mathbf{p}) \defeq g(\mathbf{x},\mathbf{p};T) - h(\mathbf{x})$
         (eq.~\eqref{eq:dual-manifold}, \autoref{def:M-appendix}).
  \State \Return $\mathcal{M}(\,\cdot\,;\mathbf{p})$
\EndProcedure
\end{algorithmic}
\end{algorithm}

\paragraph{Default budgets.}
The numerical experiments of \autoref{sec:results-lle} use
$N_{\mathrm{tan}} = 1000$, $N_{\mathrm{def}} = 20$,
$N_{\mathrm{outer}}^{\max} = 5$, $\epsilon = 10^{-7}$, and
$\tau_{\mathrm{eq}} = 10^{-6}$ as the system-type defaults for binary LLE
(see the \texttt{binary\_lle} preset in
\texttt{param/core/solver.py}); the ternary-LLE,
ternary-VLE and CEOS-VLE presets adjust only $N_{\mathrm{tan}}$ and
$N_{\mathrm{def}}$ to reflect the larger simplex.

%% file: sections/appendix_convergence_proof.tex
%
%

\subsection{Convergence of the two-stage outer loop under topology-induced discontinuities}
\label{appendix:convergence-proof}

The objective $\Phi(\mathbf{p})$ defined in \eqref{eq:phi-total} is
non-smooth at parameter values where the surface topology of
$\mathcal{M}(\,\cdot\,;\mathbf{p})$ bifurcates: at such $\mathbf{p}$ the
cardinality of the defective-minima set $\mathcal{X}_D^{-}(\mathbf{p})$
jumps and the active branch of the composite supporting hyperplane $h$
switches. We collect here a constructive argument that the outer
simplicial-homology global optimisation (SHGO) loop terminates in finite
time despite this non-smoothness. The argument has two halves that are
worth keeping apart, because they are of very different strength. The
first half, that a returned parameter vector with $\Phi = 0$ is globally
optimal, is unconditional and is stated as
\autoref{prop:zero-certificate}. The second half, that such a point is
\emph{found} whenever it exists, is inherited from the deterministic
adequate-sampling results of \cite{Endres2018} and is stated as
\autoref{theor:two-stage-convergence}, conditional on two assumptions
that are made explicit below.

\subsubsection{Inherited results, assumptions and the penalty extension}
\label{subsec:convergence-premises}

\paragraph{What SHGO supplies.}
Three results of \cite{Endres2018} are used, and it is worth stating
precisely what they do and do not say. Theorem~3 
guarantees that, for a continuous and Lipschitz-smooth objective on a
compact bounded domain, the star domain of every minimiser vertex of the
directed simplicial complex contains at least one stationary point.
Definition~22 
calls a surface \emph{adequately sampled} when exactly one stationary
point lies in each such star domain, and Theorem~5 
then shows that, once the surface is adequately sampled, no further
increase of the sampling set enlarges the minimiser pool. Together these
give a finite, \emph{deterministic} termination statement: after finitely
many samples the candidate set is fixed, and one local descent per
candidate exhausts the stationary points of the objective. The sampling
sequence is the deterministic Sobol sequence, 
so no probabilistic statement enters at any point of the argument. What
these results do \emph{not} supply is a test for adequacy from function
values alone; Corollary~2 
is a one-sided diagnostic only. The favourable case, named in that paper,
is the one in which the number of local minima is known in advance, and
phase equilibria are the example given. 

Two further points fix the function class. The SHGO sampling and
minimiser-pool extraction use only objective values at the vertices of
the complex and comparisons between them, so they require no derivative
information and apply verbatim to non-smooth objectives; 
smoothness enters only through the local refinement step and through the
hypotheses of Theorems~3 and~5. Those hypotheses do include continuity,
which is why the assumptions below quarantine the discontinuity locus of
$\Phi$ rather than claiming that SHGO covers it.

\paragraph{Assumption 1 (bifurcation strata are lower-dimensional and of
zero measure).}
At any parameter value $\mathbf{p}_{\dagger} \in P$ at which the cardinality
$|\mathcal{X}_D^{-}(\mathbf{p})|$ jumps, $\Phi$ is discontinuous in
$\mathbf{p}$. The composite supporting hyperplane $h$ defined in
\autoref{def:h-composite-appendix} is also non-smooth on the parameter sets
where the active tie-line branch switches (the point-wise minimum of
finitely many affine functions is only piecewise affine in $\mathbf{x}$
and, through the tie-line endpoints, in $\mathbf{p}$). We assume that both
phenomena are confined to lower-dimensional strata
$\Sigma \subset P \subset \mathbb{R}^{m}$: that the bifurcation set is the
union of finitely many codimension-one varieties on which two adjacent
defective minima coalesce, or on which a defective minimum collides with
an experimental endpoint, and that $\Sigma$ therefore has zero Lebesgue
measure in $P$.

This assumption is not proved here, and it is not provable for a
completely arbitrary black-box $g$. It is, however, routine for the model
families used in this work. For $g$ real-analytic jointly in
$(\mathbf{x},\mathbf{p})$, which covers NRTL, Wilson, UNIQUAC and the
cubic equations of state on the interior of the simplex, the coalescence
of two stationary points of $\mathcal{M}$ and the collision of a
stationary point with an endpoint are each expressed by the simultaneous
vanishing of $\nabla_{\mathbf{x}}\mathcal{M}$ and of a determinant of
second derivatives. Each such condition is the zero set of a non-trivial
real-analytic function of $\mathbf{p}$, and the zero set of a
real-analytic function that does not vanish identically has zero Lebesgue
measure. Assumption~1 therefore holds for those families unless the
degeneracy condition is satisfied identically on an open subset of $P$,
which would mean the model is structurally degenerate on that subset. For
tabulated or otherwise genuinely black-box $g$ the assumption remains an
assumption.

\paragraph{Assumption 2 (adequate sampling).}
Both the inner call on $\mathcal{M}$ over $\Delta^{n}$ and the outer call
on $\Phi_{\mathrm{aug}}$ over $P$ are adequately sampled in the sense of
Definition~22 of \cite{Endres2018}. 
This is the standard hypothesis under which the SHGO guarantees are
stated, and by the passage cited above it is a hypothesis that black-box
sampling cannot discharge by itself. For the inner call the Gibbs phase
rule provides the required side information, since it caps the number of
coexisting phases and hence the number of minima of $\mathcal{M}$ that
must be mapped (see Remark~\ref{rem:adequate-sampling}). For the outer call
no comparable bound is available, and adequacy is assessed empirically by
the stability of the returned optimum under increased sampling density.

\paragraph{Penalty extension on bifurcation strata.}
In the implementation, whenever the inner SHGO call cannot resolve
$\mathcal{X}_D^{\pm}(\mathbf{p})$ to within the simplicial-homology
partition's resolution, $\Phi(\mathbf{p})$ is set to a maximal penalty
value. Mathematically, fix any constant $\Phi_{\max} \in \mathbb{R}$ with
$\Phi_{\max} > \sup\{\Phi(\mathbf{p}) : \mathbf{p}\in P\setminus\Sigma\}$
(or, equivalently, $\Phi_{\max} = +\infty$) and define the augmented
objective
\begin{equation}\label{eq:phi-aug}
  \Phi_{\mathrm{aug}}(\mathbf{p})
  \;\defeq\;
  \begin{cases}
    \Phi(\mathbf{p}), & \mathbf{p}\in P_{\mathrm{reg}}, \\[2pt]
    \Phi_{\max}, & \mathbf{p}\in P\setminus P_{\mathrm{reg}},
  \end{cases}
\end{equation}
where $P_{\mathrm{reg}} \subseteq P$ is the (closed) sub-domain on which
both inner SHGO calls return a stable, simplicial-homology-resolved
$\mathcal{X}_D^{\pm}$. The set $P\setminus P_{\mathrm{reg}}$ is contained
in (a small open neighbourhood of) $\Sigma$ and inherits the latter's
zero-measure property, under Assumption~1, as the inner sampling density
$N_{\mathrm{def}}$ is increased. By construction
$\Phi_{\mathrm{aug}}\colon P \to \mathbb{R}\cup\{+\infty\}$ is bounded
above by $\Phi_{\max}$ and equals $\Phi$ everywhere it is finite. This is
a construction rather than an assumption: it is what the software
evaluates.

\subsubsection{Results}
\label{subsec:convergence-theorem}

The first statement needs neither assumption and is the reason the
method can certify its own answer.

\begin{proposition}[Zero-objective certificate]
\label{prop:zero-certificate}
$\Phi_{\mathrm{aug}}(\mathbf{p}) \geq 0$ for every $\mathbf{p}\in P$.
Consequently, if any parameter vector $\mathbf{p}^{\star}\in P$ is
exhibited with $\Phi(\mathbf{p}^{\star}) = 0$, then $\mathbf{p}^{\star}$
is a global minimiser of $\Phi$ on $P$, whatever solver produced it and
whatever sampling density was used, and by
\autoref{theor:phi-zero-sufficient} it reproduces every measured tie line
as a globally stable equilibrium.
\end{proposition}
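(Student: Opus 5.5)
The argument splits into three steps: nonnegativity of $\Phi_{\mathrm{aug}}$ on $P$, global optimality of any zero, and the appeal to \autoref{theor:phi-zero-sufficient}.

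\emph{Step 1: nonnegativity.} I split $P$ into $P_{\mathrm{reg}}$ and its complement, following the definition \eqref{eq:phi-aug}. On $P\setminus P_{\mathrm{reg}}$ we have $\Phi_{\mathrm{aug}}=\Phi_{\max}$, and $\Phi_{\max}$ exceeds the supremum of $\Phi$ over $P\setminus\Sigma$. Once Step 1 is done on $P_{\mathrm{reg}}$, that supremum is $\geq 0$, provided $P\setminus\Sigma$ is non-empty; this follows from Assumption~1, or else I take $\Phi_{\max}=+\infty$. On $P_{\mathrm{reg}}$, $\Phi_{\mathrm{aug}}=\Phi$, and by \eqref{eq:phi-total} $\Phi$ is a finite sum over $T\in\mathcal{D}$ of squares of real numbers.

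I would stress that the squaring alone already gives $\Phi\geq 0$. It does not matter whether the bracket $\Phi_{\text{tangent}}+\Phi_{\text{plane}}+\Phi_{\text{topo}}+\Phi_{\text{local}}$ is itself signed. For completeness I would still note, as in Step~C of the proof of \autoref{theor:phi-zero-sufficient}, that each component is a sum of absolute values or Euclidean norms, hence non-negative. If the implemented variant in \autoref{alg:defect-aware-objective} adds $\Phi_{\text{phase}}$, I treat it as a non-negative partition penalty.

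\emph{Step 2: global optimality of a zero.} Suppose $\Phi(\mathbf{p}^{\star})=0$. Then for every $\mathbf{p}\in P$ we have $\Phi(\mathbf{p})\geq 0=\Phi(\mathbf{p}^{\star})$, whenever $\Phi(\mathbf{p})$ is defined. The same holds for $\Phi_{\mathrm{aug}}$ by Step 1. So $\mathbf{p}^{\star}$ attains the infimum and is a global minimiser. This uses no property of the solver or of the sampling, since it is a statement about values only.

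There is one subtlety. If $\mathbf{p}^{\star}$ were a point where $\Phi_{\mathrm{aug}}$ was set to $\Phi_{\max}$, then the hypothesis $\Phi(\mathbf{p}^{\star})=0$ refers to the exact $\Phi$, not to the penalty. The exact $\Phi$ is defined through the exact sets $\mathcal{X}_D^{\pm}$ of \eqref{eq:XDmin}, so I would state the conclusion for $\Phi$ on $P$. I would remark that the minimal value $0$ of $\Phi_{\mathrm{aug}}$ is attained at $\mathbf{p}^{\star}$ whenever $\mathbf{p}^{\star}\in P_{\mathrm{reg}}$.

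\emph{Step 3: stability.} The final clause follows by invoking \autoref{theor:phi-zero-sufficient} directly. Its hypotheses are continuity and Lipschitz continuity of $g(\cdot,\mathbf{p}^{\star})$ on $\Delta^{n}$, together with $\Phi(\mathbf{p}^{\star})=0$. These give $\mathcal{M}(\cdot;\mathbf{p}^{\star})\geq 0$ on $\Delta^{n}$. Global stability of every measured split then follows, via (ii)$\Rightarrow$(iii) of \autoref{theor:M-nonneg-iff-fit} under the data-consistency assumption.

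The only genuine obstacle is the bookkeeping between $\Phi$ and $\Phi_{\mathrm{aug}}$ on the bifurcation locus. In particular, I must make sure the certificate refers to the exact $\Phi$, and is therefore independent of Lemma~\ref{lem:inner-shgo-exact}. Everything else is immediate from the sum-of-squares structure.
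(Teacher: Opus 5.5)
Your proposal is correct and follows the paper's proof. Both derive non-negativity of $\Phi_{\mathrm{aug}}$ from the squared sum of non-negative components together with $\Phi_{\max}>0$, treat any zero as attaining the greatest lower bound and hence a global minimiser, and obtain the stability clause directly from \autoref{theor:phi-zero-sufficient}. Your extra bookkeeping tightens the argument without changing it: you note that squaring alone suffices, you justify via $\Phi_{\max}>\sup\Phi\geq 0$ the positivity of $\Phi_{\max}$ that the paper simply asserts, and you state the certificate for the exact $\Phi$ rather than $\Phi_{\mathrm{aug}}$.
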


\begin{proof}
Each of the four scalar contributions
$\Phi_{\mathrm{tangent}},\Phi_{\mathrm{plane}},\Phi_{\mathrm{topo}},
\Phi_{\mathrm{local}}$ in \eqref{eq:five-components} is a sum of absolute
values of the negative-part operator $[\,\cdot\,]_{-}$ of
\eqref{eq:negpart}, of Euclidean distances, or of joint
composition--energy distances, hence is non-negative; \eqref{eq:phi-total}
squares their sum, and $\Phi_{\max} > 0$ on the complement, so
$\Phi_{\mathrm{aug}}\geq 0$ on $P$. A point attaining the value zero
therefore attains the greatest lower bound of $\Phi_{\mathrm{aug}}$ over
$P$, and is a global minimiser. The second claim is
\autoref{theor:phi-zero-sufficient}.
\end{proof}

The second statement is the finite-termination result, and it is the one
that is inherited from \cite{Endres2018}.

\begin{corollary}[Finite termination of the outer loop; inherited from
\cite{Endres2018}, Theorems~3 and~5]
\label{theor:two-stage-convergence}
Let $g\colon \Delta^{n}\times P \to \mathbb{R}$ be a reduced Gibbs free
energy that is bounded on the compact box $\Delta^{n}\times P$, let
$\Phi$ be the geometric-reformulation objective defined in
\eqref{eq:phi-total}, and let $\Phi_{\mathrm{aug}}$ be its penalty
extension \eqref{eq:phi-aug}. Under Assumptions~1 and~2 of
\autoref{subsec:convergence-premises}, the outer SHGO loop on
$\Phi_{\mathrm{aug}}$ terminates after finitely many samples at the
global minimum $\inf_{\mathbf{p}\in P}\Phi_{\mathrm{aug}}(\mathbf{p})$.
Moreover:
\begin{enumerate}
  \item[(i)] If a globally consistent fit exists in $P$, then
        $\inf_{\mathbf{p}\in P}\Phi_{\mathrm{aug}}(\mathbf{p}) = 0$, it is
        attained at some $\mathbf{p}^{\star}\in P_{\mathrm{reg}}$, and the
        loop returns such a $\mathbf{p}^{\star}$, whose global optimality
        is then certified independently by
        \autoref{prop:zero-certificate}.
  \item[(ii)] If no globally consistent fit exists in $P$, then
        $\inf_{\mathbf{p}\in P}\Phi_{\mathrm{aug}}(\mathbf{p}) > 0$, and
        the SHGO call returns this strictly positive lower bound as a
        certificate that no parameter vector in $P$ reproduces the data
        as a globally stable equilibrium.
\end{enumerate}
\end{corollary}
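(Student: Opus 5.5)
This corollary will be obtained by assembling three ingredients already in place: the SHGO termination results (Theorems~3 and~5 of \cite{Endres2018}, as summarised in \autoref{subsec:convergence-premises}), the non-negativity certificate \autoref{prop:zero-certificate}, and the equivalence \autoref{theor:M-nonneg-iff-fit}. The proof therefore has three parts: verify that the hypotheses of the inherited SHGO theorems hold for $\Phi_{\mathrm{aug}}$ on $P$; deduce that the minimiser pool stabilises after finitely many Sobol samples and that local descent from the pool recovers $\inf_P \Phi_{\mathrm{aug}}$; and then read off items (i) and (ii) from the sign of that infimum.

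\emph{Step 1: regularity on the regular set.} Boundedness of $g$ on the compact set $\Delta^{n}\times P$ makes each of $\Phi_{\mathrm{tangent}},\Phi_{\mathrm{plane}},\Phi_{\mathrm{topo}},\Phi_{\mathrm{local}}$ bounded. The composition terms are distances in the bounded simplex, and the energy terms are controlled by $\sup|g|$. Hence $\Phi$ is bounded on $P\setminus\Sigma$, a finite $\Phi_{\max}$ exists, and $\Phi_{\mathrm{aug}}$ is well defined. On each connected component of $P_{\mathrm{reg}}\setminus\Sigma$ the cardinality $|\mathcal{X}_D^{-}(\mathbf{p})|$ and the active-branch pattern of $h$ are constant. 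Under Assumption~2, Lemma~\ref{lem:inner-shgo-exact} makes the inner sets exact. The defective minima and maxima then move continuously with $\mathbf{p}$, by continuity of stationary points away from coalescence. It follows that $\Phi$ is continuous there, and Lipschitz whenever $g$ is Lipschitz in $\mathbf{p}$.

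\emph{Step 2: applying SHGO.} Since $\Sigma$ has measure zero by Assumption~1, and SHGO's vertex sampling and pool extraction use only function-value comparisons, the outer complex is built on a full-measure set on which $\Phi_{\mathrm{aug}}$ is piecewise continuous. Assumption~2 for the outer call then lets Theorem~5 of \cite{Endres2018} fix the minimiser pool after finitely many samples. Local refinement from each pool element returns the infimum.

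\emph{Step 3: the two cases.} For (i), a globally consistent fit $\mathbf{p}^{\star}$ satisfies $\Phi(\mathbf{p}^{\star})=0$ by (iii)$\Rightarrow$(i) of \autoref{theor:M-nonneg-iff-fit}. Because $\Phi_{\max}>0$, the infimum of $\Phi_{\mathrm{aug}}$ is attained on $P_{\mathrm{reg}}$ and equals $0$. \autoref{prop:zero-certificate} then certifies global optimality of whatever zero the solver returns. For (ii), no $\mathbf{p}$ has $\Phi(\mathbf{p})=0$, by (i)$\Rightarrow$(iii). We still need the infimum to be strictly positive rather than merely unattained. I would prove this by compactness: take a sequence with $\Phi_{\mathrm{aug}}(\mathbf{p}_j)\to 0$ and extract a convergent subsequence in compact $P$. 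Its limit lies in the closed set $P_{\mathrm{reg}}$. Lower semicontinuity of $\mathbf{p}\mapsto \min_{\mathbf{x}}\mathcal{M}(\mathbf{x};\mathbf{p})$ then gives $\mathcal{M}\ge 0$ at the limit. Applying (ii)$\Rightarrow$(iii) produces a consistent fit, a contradiction.

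\emph{Main obstacle.} I expect Step~1 and the positivity in (ii) to be the difficult points, because $\Phi$ jumps across $\Sigma$. The limit in (ii) could land on a stratum where a defective minimum has just vanished, so that $\Phi_{\mathrm{tangent}}$ and the defect terms tend to zero from the defective side while $\mathcal{M}$ is still slightly negative. I would first try to resolve this using the continuity of $\min\mathcal{M}$ in $\mathbf{p}$, inherited from joint continuity of $g$. This shows that the plane term $|[\min\mathcal{M}]_{-}|$, which is part of $\Phi$ whenever $\mathcal{X}_D^{-}\ne\emptyset$, tends to $|[\min\mathcal{M}(\cdot;\bar{\mathbf{p}})]_{-}|$, and that this limit must be $0$. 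If that argument fails, I would fall back on setting $\Phi_{\max}$ on a closed neighbourhood of $\Sigma$ as in \eqref{eq:phi-aug}, so that the relevant limits stay in $P_{\mathrm{reg}}$.
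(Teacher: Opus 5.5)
Your treatment of the termination claim and of item~(i) follows the paper's proof essentially step for step. You bound $\Phi$, use Assumption~1 to get finitely many topology classes with $\Phi_{\mathrm{aug}}$ continuous on each, apply Theorems~3 and~5 of \cite{Endres2018} under Assumption~2, and read (i) off \autoref{theor:M-nonneg-iff-fit} and \autoref{prop:zero-certificate}. Two small repairs are needed there. First, in (i), $\mathbf{p}^{\star}\in P_{\mathrm{reg}}$ does not follow from $\Phi_{\max}>0$; your sentence presupposes it. Argue it as the paper does: $\mathcal{X}_D^{-}(\mathbf{p}^{\star})=\emptyset$, so the inner call resolves trivially. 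Second, in Step~1, making the Lipschitz property conditional on $g$ being Lipschitz in $\mathbf{p}$ is more defensible than the paper's claim that it follows from boundedness of $g$. It is, however, a hypothesis you are adding to the statement, and you should declare it as such.

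Where you genuinely depart from the paper is (ii), and the departure is an improvement. The paper passes directly from $\Phi(\mathbf{p})>0$ at every $\mathbf{p}\in P_{\mathrm{reg}}$, together with $\Phi_{\max}>0$ elsewhere, to $\inf_{P}\Phi_{\mathrm{aug}}>0$. That does not follow without a compactness argument like yours. As written, though, your argument invokes the wrong semicontinuity. Write $m(\mathbf{p})=\min_{\mathbf{x}\in\Delta^{n}}\mathcal{M}(\mathbf{x};\mathbf{p})$. Lower semicontinuity gives only $m(\bar{\mathbf{p}})\le\liminf_{j}m(\mathbf{p}_{j})$, which is compatible with $m(\bar{\mathbf{p}})<0$. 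What you need is $m(\bar{\mathbf{p}})\ge\limsup_{j}m(\mathbf{p}_{j})$, i.e.\ upper semicontinuity. This holds because $m$ is a pointwise infimum over $\mathbf{x}$ of functions continuous in $\mathbf{p}$. That requires only that $g(\mathbf{x},\cdot)$ be continuous for each fixed $\mathbf{x}$, in particular at the anchors so that each $h_{i}$ moves continuously. This is weaker than joint continuity but still more than the boundedness in the statement. Upper semicontinuity is also exactly what rules out the scenario in your ``main obstacle'' paragraph.

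Make the bridge from $\Phi(\mathbf{p}_j)\to 0$ to the sign of $m$ explicit as well. If $m(\mathbf{p}_{j})<0$, a global minimiser of $\mathcal{M}(\cdot;\mathbf{p}_{j})$ is not removed by the endpoint stripping, because $\mathcal{M}=0$ at every anchor (Step~2 of the proof of \autoref{theor:M-nonneg-iff-fit}, under the data-consistency assumption). It therefore lies in $\mathcal{X}_D^{-}$, so $\Phi(\mathbf{p}_{j})\ge(|m(\mathbf{p}_{j})|/\epsilon)^{2}$, whence $\liminf_{j}m(\mathbf{p}_{j})\ge 0$. The limit $\bar{\mathbf{p}}$ is then a globally consistent fit somewhere in $P$, contradicting the hypothesis of (ii). You never need $\bar{\mathbf{p}}\in P_{\mathrm{reg}}$, so drop that clause. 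Drop the fallback too: enlarging the penalty region to a neighbourhood of $\Sigma$ changes $\Phi_{\mathrm{aug}}$ and can discard consistent fits lying near $\Sigma$, which would break (i).
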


\begin{proof}
By the penalty extension, $\Phi_{\mathrm{aug}}$ is defined on the compact
box $P\subset\mathbb{R}^{m}$ and is bounded above by $\Phi_{\max}$; by
\autoref{prop:zero-certificate} it is bounded below by zero, so the
infimum is well defined. By Assumption~1 the discontinuity locus $\Sigma$
is a finite union of lower-dimensional strata and therefore does not
separate $P_{\mathrm{reg}}$ into infinitely many connected components:
$P_{\mathrm{reg}}$ has finitely many topology classes, indexed by the
cardinality of $\mathcal{X}_D^{-}(\mathbf{p})$ together with the active
branch of $h$. On the interior of each such class $\Phi_{\mathrm{aug}}$
is continuous, and it is Lipschitz there because $g$ is bounded on the
compact box and the chords depend continuously on the endpoint values.

The hypotheses of Theorem~3 of \cite{Endres2018} 
are therefore met class by class, so every minimiser vertex produced by
the outer complex has a stationary point of $\Phi_{\mathrm{aug}}$ in its
star domain. Under Assumption~2 the outer complex is adequately sampled,
so Theorem~5 of the same paper 
applies and the minimiser pool ceases to grow after finitely many
samples. The global minimum is therefore one of finitely many candidate
vertices, and the loop returns it after one local descent per candidate.
No probabilistic statement is used: the Sobol sampling sequence is
deterministic, 
and the two theorems are deterministic statements about an adequately
sampled complex. This establishes the main termination claim.

For (i), if a globally consistent fit $\mathbf{p}^{\star}\in P$ exists,
then by \autoref{theor:M-nonneg-iff-fit} the dual manifold satisfies
$\mathcal{M}(\mathbf{x};\mathbf{p}^{\star})\geq 0$ on $\Delta^{n}$, hence
$\mathcal{X}_D^{-}(\mathbf{p}^{\star})=\emptyset$, the inner SHGO call
resolves trivially, $\mathbf{p}^{\star}\in P_{\mathrm{reg}}$, and
$\Phi(\mathbf{p}^{\star})=\Phi_{\mathrm{aug}}(\mathbf{p}^{\star})=0$.
Since $\Phi_{\mathrm{aug}}\geq 0$, this is the global minimum.

For (ii), if no globally consistent fit exists in $P$, then for every
$\mathbf{p}\in P_{\mathrm{reg}}$ at least one of the non-negative
components in \eqref{eq:five-components} is strictly positive by
\autoref{theor:M-nonneg-iff-fit}, so $\Phi(\mathbf{p})>0$. On
$P\setminus P_{\mathrm{reg}}$ the augmented objective takes the value
$\Phi_{\max}>0$. Hence $\inf_{P}\Phi_{\mathrm{aug}}>0$, and SHGO returns
this strictly positive infimum as a certificate of non-existence of a
globally consistent fit within the chosen parameter bounds.
\end{proof}

\begin{corollary}[Two-stage outer loop]
\label{cor:two-stage-induction}
Under the same assumptions, the two-stage outer loop of
\autoref{sec:methods} (cheap tangent-plane pass followed by defect-aware
refit) terminates at the same global minimum as a single SHGO call on
$\Phi_{\mathrm{aug}}$. Each defect-aware iteration is itself a SHGO call
on $\Phi_{\mathrm{aug}}$, terminating finitely by
\autoref{theor:two-stage-convergence}; after at most
$|\{\,\text{topology classes of $\mathcal{M}$ in $P$}\,\}|$ outer
iterations the loop reaches a fixed point, since the topology class can
only be revisited finitely often before either a globally consistent fit
or the strictly positive lower bound of (ii) is certified.
\end{corollary}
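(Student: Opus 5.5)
The plan is to reduce the two-stage loop to repeated applications of \autoref{theor:two-stage-convergence}, and to obtain the bound on the number of outer iterations from a monotone progress argument over topology classes.

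First, I would fix the objects each stage sees. Stage~1 minimises $\sum_T(\Phi_{\text{tangent}}/\epsilon)^2$. This is a finite sum of compositions of $g$ with affine maps and $[\cdot]_{-}$, so it is continuous on $P$ whenever $g$ is. It therefore falls under Theorems~3 and~5 of \cite{Endres2018} directly, with no bifurcation strata at all, and terminates finitely at some $\mathbf{p}^{(1)}$. If \textsc{DetectDefects} reports $\mathcal{X}_D^{-}(\mathbf{p}^{(1)})=\emptyset$, then $\Phi_{\text{plane}},\Phi_{\text{topo}},\Phi_{\text{local}}$ vanish at $\mathbf{p}^{(1)}$. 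I would then show that $\Phi(\mathbf{p}^{(1)})$ coincides with the stage-1 value. If that value is zero, \autoref{prop:zero-certificate} certifies $\mathbf{p}^{(1)}$ as the global minimiser of $\Phi_{\mathrm{aug}}$, which is the same output a single SHGO call would give.

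Second, for each iteration with $\mathrm{HasDefects}$ true, I would note that the loop body is by construction one SHGO call on $\Phi_2=\Phi_{\mathrm{aug}}$. By \autoref{theor:two-stage-convergence}, under Assumptions~1 and~2, it terminates finitely and returns $\inf_P\Phi_{\mathrm{aug}}$ together with a minimiser. The warm-start hint is informational only (see the footnote to \autoref{alg:geometric-bilevel-solver}), so the returned value does not depend on it. Consequently the first defect-aware iteration already attains the same global minimum as a single call. The remaining issue is termination of the \texttt{while} loop itself, since the \textsc{DetectDefects} flag may remain true when the infimum is strictly positive, as in case~(ii).

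Third, for the iteration bound, I would attach to each iterate its topology class: the pair formed by $|\mathcal{X}_D^{-}(\mathbf{p})|$ and the active branch of $h$. Assumption~1 makes these classes finite in number, as used in the proof of \autoref{theor:two-stage-convergence}. I would then split into two cases.
\begin{itemize}
\item[(a)] If the returned $\mathbf{p}^{\star}$ has $\Phi=0$, then $\mathcal{X}_D^{-}=\emptyset$ by \autoref{theor:phi-zero-sufficient} and the loop exits.
\item[(b)] Otherwise the infimum is positive by (ii). Each subsequent call, being deterministic (Sobol sampling, with the same data and objective), returns a minimiser in a class already visited. A visited class can recur at most once before the iterate sequence becomes stationary, so a fixed point is reached within the stated number of iterations.
\end{itemize}

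The main obstacle is step three. The algorithm as written exits only on $\neg\mathrm{HasDefects}$ or on $N_{\mathrm{outer}}^{\max}$, so finiteness in case (b) really comes from determinism plus the cap, not from progress. I would first attempt a strict-decrease argument on $\Phi_{\mathrm{aug}}$ across iterations. If that fails because every call already returns the global value, I would fall back to the determinism argument: identical inputs to SHGO give identical outputs, so the sequence is constant after the first defect-aware call. The claimed class bound then holds trivially. I would also flag that this weaker form is what is actually established.
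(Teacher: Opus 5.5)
Step~1 leaves a case open. The loop also exits straight after stage~1 when \textsc{DetectDefects} returns $\mathcal{X}_D^{-}(\mathbf{p}^{(1)})=\emptyset$ while $\Phi_1(\mathbf{p}^{(1)})=\sum_T(\Phi_{\text{tangent}}/\epsilon)^2>0$.

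Nothing excludes this once several chords share a $(P,T)$. The reason is that $\Phi_{\text{tangent}}$ tests $g$ against each individual $h_i$, whereas $\mathcal{X}_D^{-}$ is built from $\mathcal{M}=g-\min_i h_i$. For example, in a binary, two chords of a strictly convex $g$ that share an endpoint give $\mathcal{M}>0$ everywhere except at that endpoint, yet $\Phi_{\text{tangent}}>0$.

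In that case no stage-2 call is made and \autoref{prop:zero-certificate} is silent. You have therefore not shown that the returned point is the global minimum of $\Phi_{\mathrm{aug}}$.

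The missing ingredient is the property the paper's proof opens with: stage~1 is a \emph{lower bound} on $\Phi_{\mathrm{aug}}$. All four components are non-negative, so $(a+b)^2\ge a^2$ gives $\Phi\ge\Phi_1$ term by term in $T$, and $\Phi_{\max}$ dominates $\Phi_1$ off $P_{\mathrm{reg}}$. Combine this with two further facts: the equality $\Phi(\mathbf{p}^{(1)})=\Phi_1(\mathbf{p}^{(1)})$ you already plan to prove, and $\mathbf{p}^{(1)}\in P_{\mathrm{reg}}$, as in part~(i) of \autoref{theor:two-stage-convergence}. Together they yield
\[
\Phi_{\mathrm{aug}}(\mathbf{p}^{(1)})=\Phi_1(\mathbf{p}^{(1)})\le\Phi_1(\mathbf{p})\le\Phi_{\mathrm{aug}}(\mathbf{p})\qquad\text{for all }\mathbf{p}\in P,
\]
so a stage-1 exit is globally optimal whatever its value. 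The zero certificate is only the special case $\Phi_1(\mathbf{p}^{(1)})=0$.

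The middle inequality needs $\mathbf{p}^{(1)}$ to be a \emph{global} minimiser of $\Phi_1$, i.e.\ adequate sampling of the stage-1 call. Assumption~2 names only the inner call and the outer call on $\Phi_{\mathrm{aug}}$. Your claim that Theorems~3 and~5 apply to stage~1 ``directly'' should therefore state this extension explicitly.

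On the iteration count your route differs from the paper's and is the more convincing of the two. The paper appeals only to finiteness of the topology-class partition of $P_{\mathrm{reg}}$ and gives no progress measure that prevents a class from recurring. You observe instead that $\Phi_2$ does not depend on the iterate and that SHGO ignores the warm-start hint. Every refit therefore returns the same point, and the iterate sequence is constant from the first refit on. The loop then leaves either on $\neg\mathrm{HasDefects}$ or at $N_{\mathrm{outer}}^{\max}$ with that point. As you say, the class-count bound then holds only trivially. The class-recurrence sentence in your case~(b) is redundant once determinism is invoked and can be dropped.
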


\begin{proof}
Stage~1 minimises $\sum_{T}(\Phi_{\mathrm{tangent}}/\epsilon)^2$, which
is a Lipschitz lower bound on $\Phi_{\mathrm{aug}}$ that requires no
inner SHGO call. The minimiser of stage~1 is a feasible warm-start for
stage~2; stage~2 is a SHGO call on the full $\Phi_{\mathrm{aug}}$ and so
terminates finitely by \autoref{theor:two-stage-convergence}. The
defect-recheck step partitions $P_{\mathrm{reg}}$ by topology class, and
the cardinality of this partition is finite by Assumption~1, so the loop
reaches a fixed point in finitely many iterations.
\end{proof}

\begin{remark}[Claim strength]\label{rem:claim-strength}
The two results above should be read together, and neither should be
read for more than it says. Whenever a parameter vector with
$\Phi(\mathbf{p}^{\star}) = 0$ is returned, it is certified globally
optimal by the non-negativity of $\Phi$ alone
(\autoref{prop:zero-certificate}) and, by
\autoref{theor:phi-zero-sufficient}, reproduces every measured tie line
as a globally stable equilibrium; this certificate is unconditional and
is verifiable a posteriori from the returned objective value. That such a
point is found whenever one exists inherits the deterministic
adequate-sampling guarantees of SHGO
(\cite{Endres2018}, Theorems~3 and~5), 
which is the strongest form of guarantee available for a black-box
objective and is deterministic rather than probabilistic. The one step
that remains an assumption rather than a proof is that the bifurcation
strata of $\Phi$ carry zero measure (Assumption~1), which is justified
above for real-analytic model families and asserted in general.
\end{remark}

\begin{remark}\label{rem:numpy-inf}
The proof reduces the bilevel termination question to the deterministic
adequate-sampling argument of \cite{Endres2018} 
applied to the penalty-extended objective $\Phi_{\mathrm{aug}}$. The
result is therefore an existence statement: SHGO in the parameter space
terminates finitely at either (a)~a globally consistent fit if one exists
in $P$, or (b)~the minimum-$\Phi_{\mathrm{aug}}$ approximate fit
accompanied by a strictly positive lower bound certifying non-existence
of an exact fit within the chosen parameter bounds. In the implementation
the constant $\Phi_{\max}$ is taken to be \texttt{numpy.inf}, which the
SHGO outer solver treats as a maximal penalty consistent with the
bounded-objective hypothesis after the trivial reduction
$\Phi_{\max} \mapsto \sup_{P_{\mathrm{reg}}}\Phi + 1$.
\end{remark}

\begin{remark}
The discontinuity strata $\Sigma$ shrink as the inner sampling density
$N_{\mathrm{def}}$ is increased, because finer simplicial covers resolve
nearly-coalescing defective minima that are otherwise grouped into a
single $\mathcal{X}_D^{-}$ representative. In the limit
$N_{\mathrm{def}}\to\infty$ the augmented objective $\Phi_{\mathrm{aug}}$
collapses to $\Phi$ on a set of full measure, and the conclusions of
\autoref{theor:two-stage-convergence} hold for $\Phi$ itself.
\end{remark}

%% file: sections/appendix_ceos_validation.tex
%

\label{appendix:ceos-validation}

The three industrial case studies of \autoref{sec:results-sour-gas},
\autoref{sec:results-ccs} and \autoref{sec:results-refrigerant} all
use a cubic equation of state, for which the inner Gibbs energy
minimisation must contend with multiple volume roots. This appendix
validates that inner solver on an already-solved problem before it is
used on unsolved ones.

\paragraph{The reference study.} Glass, Djelassi and
Mitsos~\cite{Glass2018CEOS} pose cubic-equation-of-state parameter
estimation as a bilevel program in which the cubic itself enters as an
equality constraint on the lower level and three nested lower-level
programs encode a sufficient criterion for thermodynamic stability:
Baker's tangent-plane test, a root-discrimination test for mechanical
stability, and a test on the number of predicted phases.
Their proof of concept is the isothermal vapor--liquid equilibrium of
C$_5$H$_{12}$/H$_2$S at $T = 344.261$~K, taken from Reamer, Sage and
Lacey~\cite{Reamer1953} and reproduced in their supplementary
material: seven tie lines valid over $689.48$--$4826.33$~kPa, fitted
with both Peng--Robinson and Soave--Redlich--Kwong.
A single binary interaction parameter $k_{a,ij} \in [-1, 1]$ is
regressed with van der Waals mixing and combining rules, the second
interaction parameter $k_{b,ij}$ being held at zero.
Their solution is obtained with the in-house tool BOARPET driving the
commercial global solver BARON, with the lower-level programs
discretised on a grid of $30$ equidistant supporting points per mole
fraction variable in order to keep the computation tractable, at the
stated cost of the global optimality guarantee.

\paragraph{The replication.} The same seven tie lines, the same pure
component constants and the same single-parameter regression were
solved here with the two-stage geometric solver of
\autoref{sec:methods}, with the equation of state entering only
through the black-box callable $g(\mathbf{x}, \mathbf{p}, T, P)$ and
the stable root selected as the minimum-Gibbs-energy branch. No part
of the cubic is imposed as a constraint. The regressed interaction
parameters are $k_{a,ij} = 0.063431$ for Peng--Robinson and
$k_{a,ij} = 0.070902$ for Soave--Redlich--Kwong, against the published
values $0.064493$ and $0.071294$,
that is, relative deviations of $1.65\,\%$ and $0.55\,\%$ and absolute
deviations of $1.1 \times 10^{-3}$ and $3.9 \times 10^{-4}$ in a
parameter whose admissible range spans two units.

\input{figures/appendix-ceos-glass-validation/appendix-ceos-glass-validation_figure}

\paragraph{Agreement on the published objective.} The objective
minimised here is the geometric dual-manifold functional $\Phi$, not
the least-squares mismatch $f^{u}$ of~\cite{Glass2018CEOS}, so the two
converged objective values are not comparable quantities and are not
compared. To obtain a like-for-like number, the published objective
form, the sum over measurements and phases of the squared
mole-fraction residual,
was evaluated at the parameters regressed here, giving $0.00632$ (PR)
and $0.00622$ (SRK) against the published $f^{u,\star}$ of $0.00643$
and $0.00633$: lower by $1.8\,\%$ and $1.7\,\%$ respectively. The
corresponding mean absolute composition deviations at the measured
pressures are $0.0031$ (liquid) and $0.0235$ (vapor) for
Peng--Robinson, and $0.0027$ and $0.0233$ for
Soave--Redlich--Kwong, consistent with the tolerance of under
$2.5\,\%$ that Glass \emph{et al.} report for their own fit.
The reconstruction of $f^{u}$ rests on our reading of their objective
definition, in which the component index is summed over; the closeness
of the recovered values is evidence that the reading is correct, but it
is not a quantity taken from the paper.

\autoref{fig:appendix-ceos-glass-validation} reproduces both figures
that~\cite{Glass2018CEOS} publish for this system. Panel~(a) is the
analogue of their Fig.~6, the isothermal $P$--$x$--$y$ envelope against
the Reamer measurements, and recovers their observation that the
Peng--Robinson and Soave--Redlich--Kwong envelopes practically
coincide. Panels~(b) and~(c) are the analogues of the left and right
panels of their Fig.~5, the two Gibbs energy branches with the
supporting tangent at $2757.90$~kPa and $689.48$~kPa.
The predicted points of tangency fall at
$x_{\mathrm{C_5H_{12}}} = 0.104$ and $0.505$ at the higher pressure and
at $0.433$ and $0.919$ at the lower, matching the mole fractions marked
in their figure to the precision at which that figure can be read.

\paragraph{Cost, and what it does and does not show.} The two fits took
$164.7$~s (PR) and $106.9$~s (SRK) of wall clock on a single commodity
core, against the approximately $4.5$~h per case study
that~\cite{Glass2018CEOS} report,
a factor of roughly one hundred. This is an indicative comparison and
not a controlled one: the hardware differs, and more importantly the
two methods offer different guarantees, since the present inner solver
carries the deterministic adequate-sampling guarantees of SHGO
(\autoref{appendix:convergence-proof}) rather than a branch-and-bound
bound, while the discretised lower-level programs of the reference
study themselves surrender the global-optimality guarantee.
What the comparison does establish is that the cost profile is not the
one that led those authors to conclude that industrial case studies
were out of reach for their formulation.

\paragraph{Defect record.} For both models the solver entered its
second, defect-aware stage and the defect indicator remained non-empty
at convergence. With a single interaction parameter available and the
lower pressure isotherm approaching the mixture critical region, the
model has no freedom to remove the residual curvature defects while
retaining the fit, and forcing their suppression degrades the fit
without eliminating them, the same behaviour documented for the
sour-gas ternary in \autoref{sec:results-sour-gas}. In this case study
stage two therefore acted as a certifier rather than as a refitter, and
the reported parameters are the tangent-stage optimum. The result is
reported here as it stands rather than presented as defect-free.

%% file: figures/appendix-ceos-glass-validation/appendix-ceos-glass-validation_figure.tex
\begin{figure}[ht]
\centerline{\includegraphics[width=0.98\textwidth]%
{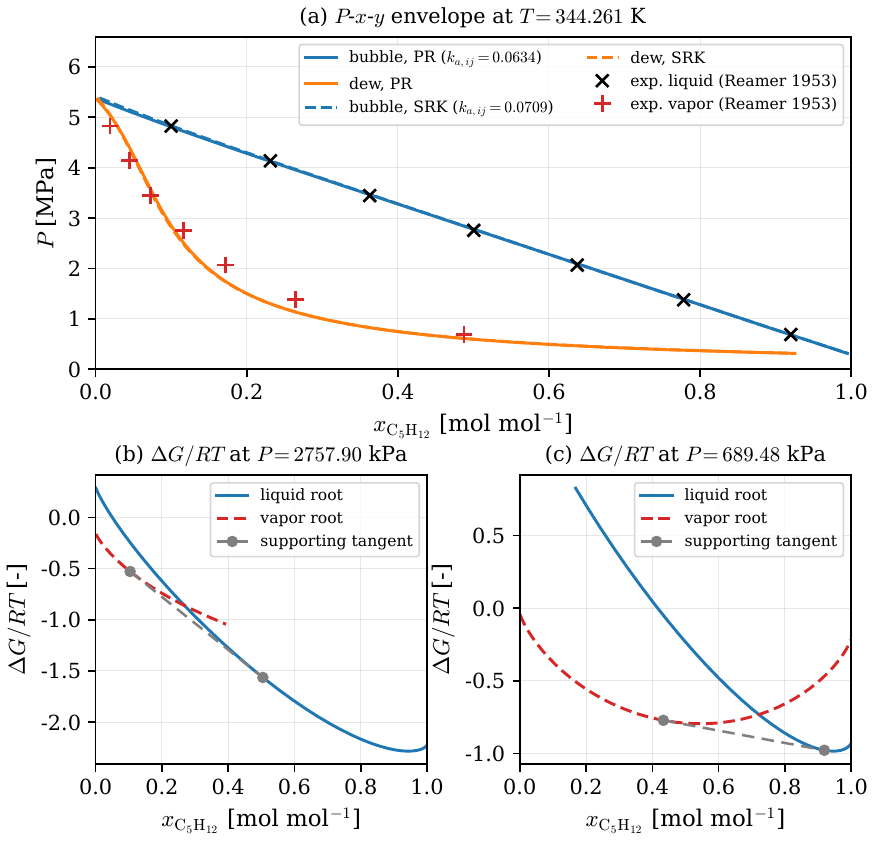}}
\caption{Replication of the C$_5$H$_{12}$/H$_2$S proof-of-concept case
study of Glass, Djelassi \& Mitsos~\cite{Glass2018CEOS} with the
present geometric bilevel solver.
(a) Isothermal $P$--$x$--$y$ envelope at $T = 344.261$~K computed at the
$k_{a,ij}$ values regressed here (PR, $0.063431$; SRK, $0.070902$),
against the seven tie lines of Reamer, Sage \& Lacey~\cite{Reamer1953}
spanning $689.48$--$4826.33$~kPa; bubble branches in blue, dew branches
in orange, PR solid and SRK dashed. This panel is the analogue of Fig.~6
of~\cite{Glass2018CEOS}, and reproduces their observation that the PR
and SRK envelopes practically coincide.
(b), (c) Gibbs free energy of mixing branches and the supporting
(Baker) tangent for PR at $2757.90$~kPa and $689.48$~kPa, the analogues
of the left and right panels of Fig.~5 of~\cite{Glass2018CEOS}. The
liquid root is blue, the vapor root red, and the gray dashed chord is
the supporting tangent whose points of tangency are the predicted
equilibrium compositions.}
\label{fig:appendix-ceos-glass-validation}
\end{figure}

%% file: sections/supp_lle_per_point.tex

In this supplement we show, for each of the four binary liquid--liquid
equilibrium (LLE) benchmark cases of~\cite{Mitsos2009LLE}, the Gibbs
free energy surface $g(x, \mathbf{p})$ and the dual manifold
$\mathcal{M}(x; \mathbf{p}) = g(x, \mathbf{p}) - h(x)$ at every
experimental temperature. The plotted curves are evaluated at the
bilevel-fitted parameters of the present work (denoted
$\mathbf{p}_{\mathrm{fit}}$ in the captions below), that is, the
optimum $\mathbf{p}^{\star}$ tabulated in the bilevel columns of the
LLE results section of the main
manuscript. They are \emph{not} the published literature parameters of
\cite{Mitsos2009LLE}; this supplement therefore complements the
literature-parameter phase envelopes shown in the main text by
documenting the stability record of the fitted optimum itself. The
NRTL non-randomness parameter $\alpha$
per case is given in the per-case discussion of the main text. For
each temperature the Gibbs plot (left) shows the surface together with
the experimental tie line and its common-tangent chord; the dual
manifold plot (right) shows $\mathcal{M}(x; \mathbf{p})$, whose
negative part $[\mathcal{M}]_{-}$ enters the bilevel objective.

The stability record at $\mathbf{p}_{\mathrm{fit}}$ across the archive
is as follows. Cases~1 and~3 are numerically defect-free: the interior
of $\mathcal{M}$ is strictly positive (minimum interior values
$+4.5\times10^{-8}$ and $+3.1\times10^{-8}$ respectively), with
equality to zero only at the two tie-line endpoints. Case~2 exhibits a
genuine negative lobe at every temperature, reaching
$\min \mathcal{M} = -2.35\times10^{-2}$ at $393.15$~K; Case~4 shows
shallow negative lobes ($\lvert \min \mathcal{M} \rvert \leq
1.5\times10^{-4}$) at six of the ten temperatures. These residual
defects are consistent with the larger least-squares residuals
reported for Cases~2 and~4 in the main text and diagnose model
misspecification at the fitted optimum rather than a failure of the
regression.

\subsection{Case 1: n-butyl-acetate -- water}

This case was fitted with NRTL non-randomness parameter $\alpha = 0.2$ at
six experimental temperatures (298.15, 303.15, 313.15, 323.15, 333.15,
343.15~K).

\begin{figure}[ht]
\centering
\begin{minipage}[t]{0.48\textwidth}
  \centering
  \includegraphics[width=\linewidth]{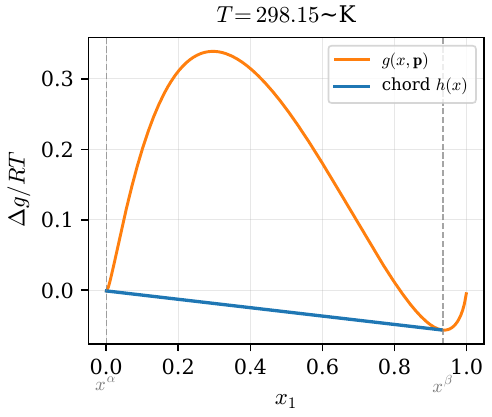}
\end{minipage}
\hfill
\begin{minipage}[t]{0.48\textwidth}
  \centering
  \includegraphics[width=\linewidth]{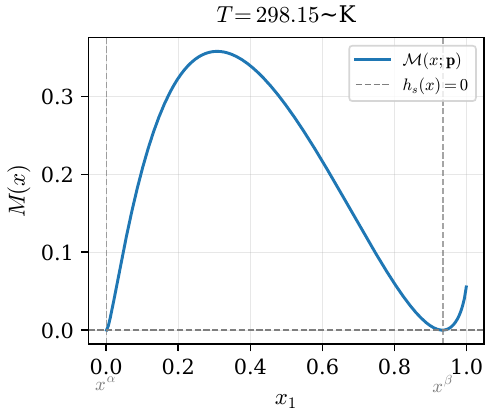}
\end{minipage}
\caption{Case 1 (n-butyl-acetate -- water) at $T = 298.15$~K. Left: Gibbs free energy surface $g(x, \mathbf{p}_{\mathrm{fit}})$; right: dual manifold $\mathcal{M}(x; \mathbf{p}_{\mathrm{fit}}) = g - h$.}
\label{fig:appendix-lle-butyl-acetate-water-T298_15K}
\end{figure}

\begin{figure}[ht]
\centering
\begin{minipage}[t]{0.48\textwidth}
  \centering
  \includegraphics[width=\linewidth]{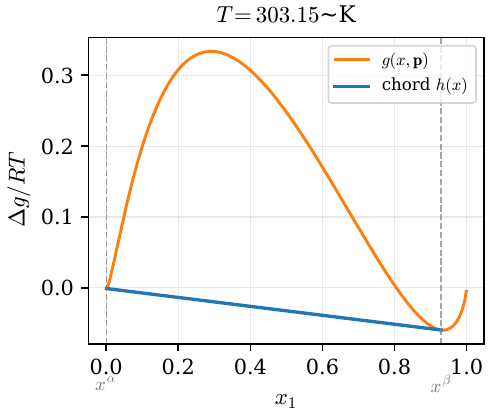}
\end{minipage}
\hfill
\begin{minipage}[t]{0.48\textwidth}
  \centering
  \includegraphics[width=\linewidth]{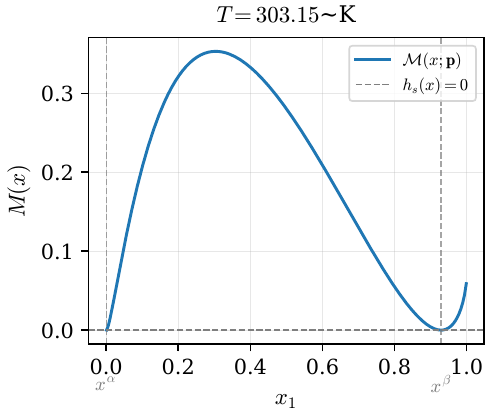}
\end{minipage}
\caption{Case 1 (n-butyl-acetate -- water) at $T = 303.15$~K. Left: Gibbs free energy surface $g(x, \mathbf{p}_{\mathrm{fit}})$; right: dual manifold $\mathcal{M}(x; \mathbf{p}_{\mathrm{fit}}) = g - h$.}
\label{fig:appendix-lle-butyl-acetate-water-T303_15K}
\end{figure}

\begin{figure}[ht]
\centering
\begin{minipage}[t]{0.48\textwidth}
  \centering
  \includegraphics[width=\linewidth]{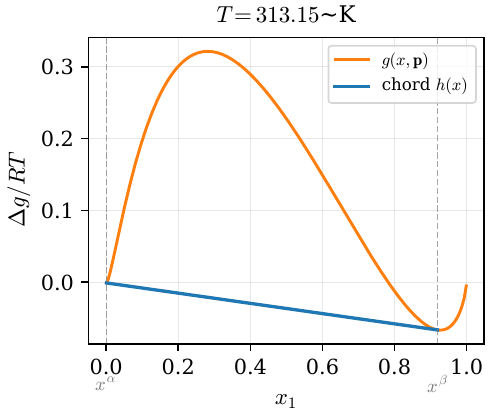}
\end{minipage}
\hfill
\begin{minipage}[t]{0.48\textwidth}
  \centering
  \includegraphics[width=\linewidth]{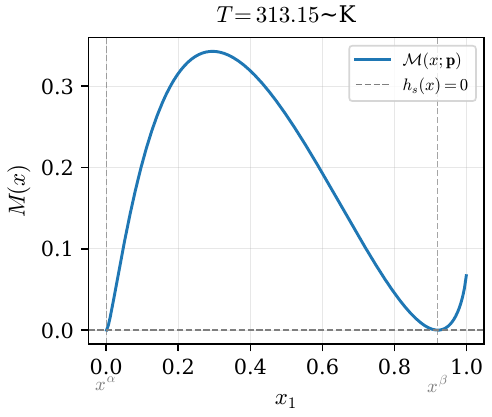}
\end{minipage}
\caption{Case 1 (n-butyl-acetate -- water) at $T = 313.15$~K. Left: Gibbs free energy surface $g(x, \mathbf{p}_{\mathrm{fit}})$; right: dual manifold $\mathcal{M}(x; \mathbf{p}_{\mathrm{fit}}) = g - h$.}
\label{fig:appendix-lle-butyl-acetate-water-T313_15K}
\end{figure}

\begin{figure}[ht]
\centering
\begin{minipage}[t]{0.48\textwidth}
  \centering
  \includegraphics[width=\linewidth]{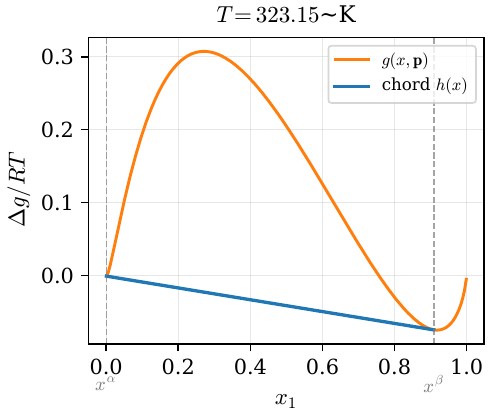}
\end{minipage}
\hfill
\begin{minipage}[t]{0.48\textwidth}
  \centering
  \includegraphics[width=\linewidth]{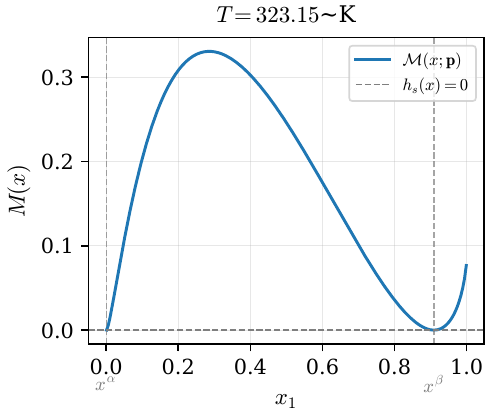}
\end{minipage}
\caption{Case 1 (n-butyl-acetate -- water) at $T = 323.15$~K. Left: Gibbs free energy surface $g(x, \mathbf{p}_{\mathrm{fit}})$; right: dual manifold $\mathcal{M}(x; \mathbf{p}_{\mathrm{fit}}) = g - h$.}
\label{fig:appendix-lle-butyl-acetate-water-T323_15K}
\end{figure}

\begin{figure}[ht]
\centering
\begin{minipage}[t]{0.48\textwidth}
  \centering
  \includegraphics[width=\linewidth]{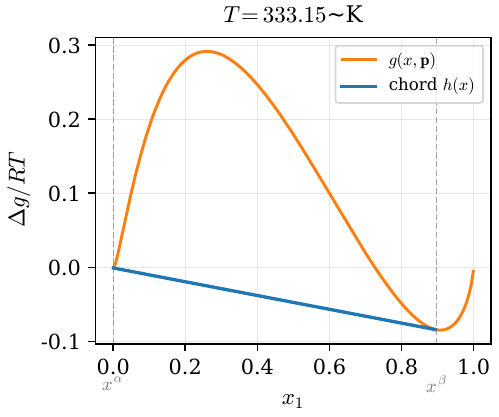}
\end{minipage}
\hfill
\begin{minipage}[t]{0.48\textwidth}
  \centering
  \includegraphics[width=\linewidth]{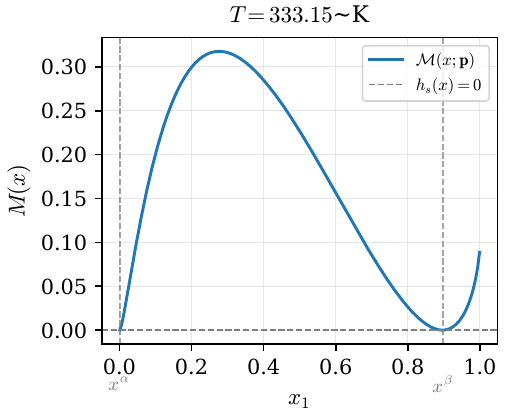}
\end{minipage}
\caption{Case 1 (n-butyl-acetate -- water) at $T = 333.15$~K. Left: Gibbs free energy surface $g(x, \mathbf{p}_{\mathrm{fit}})$; right: dual manifold $\mathcal{M}(x; \mathbf{p}_{\mathrm{fit}}) = g - h$.}
\label{fig:appendix-lle-butyl-acetate-water-T333_15K}
\end{figure}

\begin{figure}[ht]
\centering
\begin{minipage}[t]{0.48\textwidth}
  \centering
  \includegraphics[width=\linewidth]{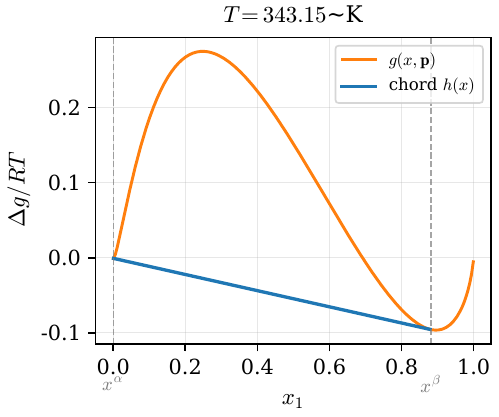}
\end{minipage}
\hfill
\begin{minipage}[t]{0.48\textwidth}
  \centering
  \includegraphics[width=\linewidth]{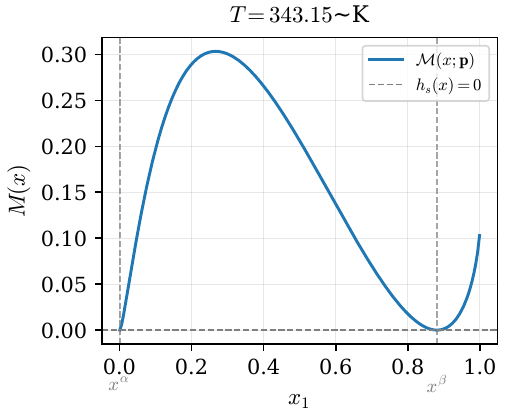}
\end{minipage}
\caption{Case 1 (n-butyl-acetate -- water) at $T = 343.15$~K. Left: Gibbs free energy surface $g(x, \mathbf{p}_{\mathrm{fit}})$; right: dual manifold $\mathcal{M}(x; \mathbf{p}_{\mathrm{fit}}) = g - h$.}
\label{fig:appendix-lle-butyl-acetate-water-T343_15K}
\end{figure}

\clearpage

\subsection{Case 2: n-butanol -- water}

This case was fitted with NRTL non-randomness parameter $\alpha = 0.44$ at
eight experimental temperatures (273.15, 293.15, 298.15, 313.15, 333.15,
353.15, 373.15, 393.15~K).

\begin{figure}[ht]
\centering
\begin{minipage}[t]{0.48\textwidth}
  \centering
  \includegraphics[width=\linewidth]{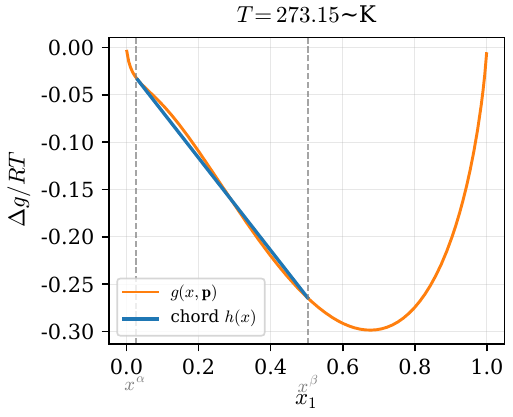}
\end{minipage}
\hfill
\begin{minipage}[t]{0.48\textwidth}
  \centering
  \includegraphics[width=\linewidth]{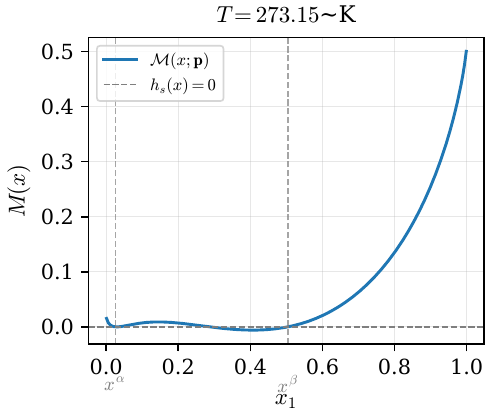}
\end{minipage}
\caption{Case 2 (n-butanol -- water) at $T = 273.15$~K. Left: Gibbs free energy surface $g(x, \mathbf{p}_{\mathrm{fit}})$; right: dual manifold $\mathcal{M}(x; \mathbf{p}_{\mathrm{fit}}) = g - h$.}
\label{fig:appendix-lle-butanol-water-T273_15K}
\end{figure}

\begin{figure}[ht]
\centering
\begin{minipage}[t]{0.48\textwidth}
  \centering
  \includegraphics[width=\linewidth]{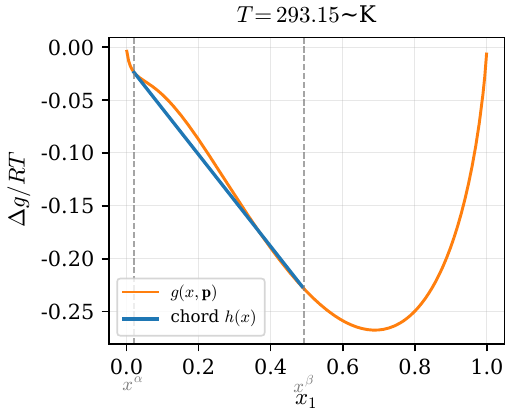}
\end{minipage}
\hfill
\begin{minipage}[t]{0.48\textwidth}
  \centering
  \includegraphics[width=\linewidth]{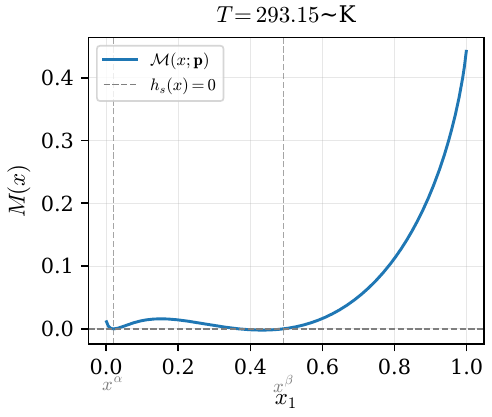}
\end{minipage}
\caption{Case 2 (n-butanol -- water) at $T = 293.15$~K. Left: Gibbs free energy surface $g(x, \mathbf{p}_{\mathrm{fit}})$; right: dual manifold $\mathcal{M}(x; \mathbf{p}_{\mathrm{fit}}) = g - h$.}
\label{fig:appendix-lle-butanol-water-T293_15K}
\end{figure}

\begin{figure}[ht]
\centering
\begin{minipage}[t]{0.48\textwidth}
  \centering
  \includegraphics[width=\linewidth]{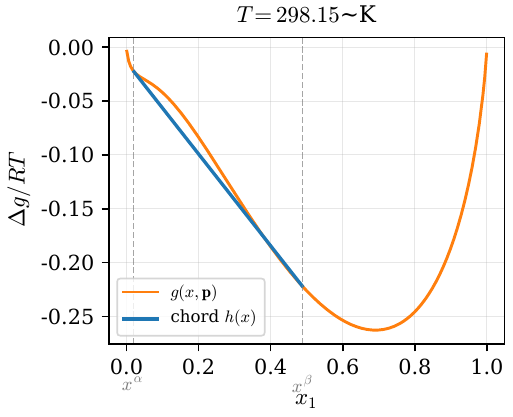}
\end{minipage}
\hfill
\begin{minipage}[t]{0.48\textwidth}
  \centering
  \includegraphics[width=\linewidth]{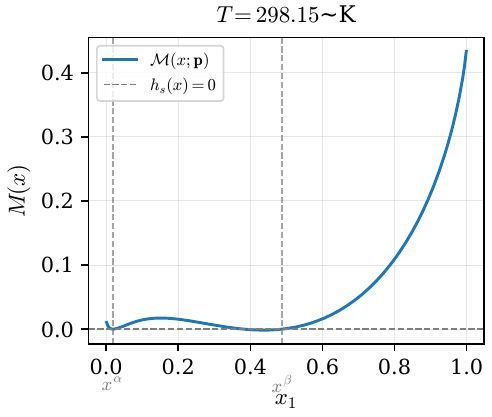}
\end{minipage}
\caption{Case 2 (n-butanol -- water) at $T = 298.15$~K. Left: Gibbs free energy surface $g(x, \mathbf{p}_{\mathrm{fit}})$; right: dual manifold $\mathcal{M}(x; \mathbf{p}_{\mathrm{fit}}) = g - h$.}
\label{fig:appendix-lle-butanol-water-T298_15K}
\end{figure}

\begin{figure}[ht]
\centering
\begin{minipage}[t]{0.48\textwidth}
  \centering
  \includegraphics[width=\linewidth]{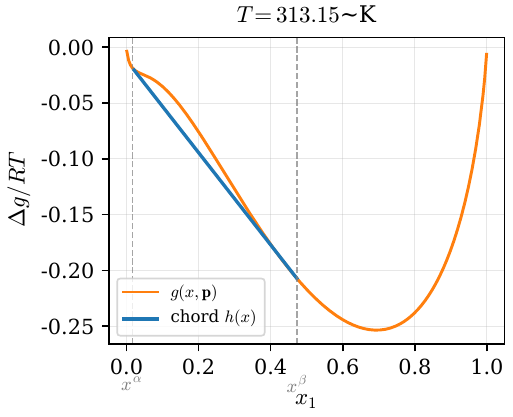}
\end{minipage}
\hfill
\begin{minipage}[t]{0.48\textwidth}
  \centering
  \includegraphics[width=\linewidth]{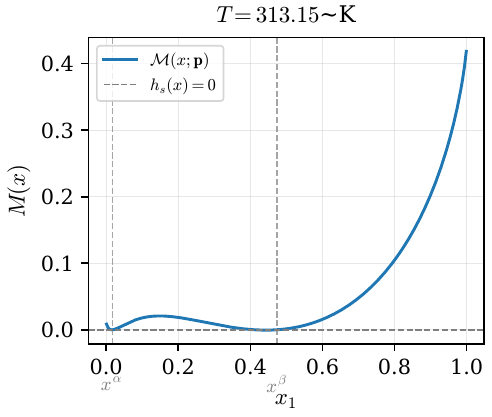}
\end{minipage}
\caption{Case 2 (n-butanol -- water) at $T = 313.15$~K. Left: Gibbs free energy surface $g(x, \mathbf{p}_{\mathrm{fit}})$; right: dual manifold $\mathcal{M}(x; \mathbf{p}_{\mathrm{fit}}) = g - h$.}
\label{fig:appendix-lle-butanol-water-T313_15K}
\end{figure}

\begin{figure}[ht]
\centering
\begin{minipage}[t]{0.48\textwidth}
  \centering
  \includegraphics[width=\linewidth]{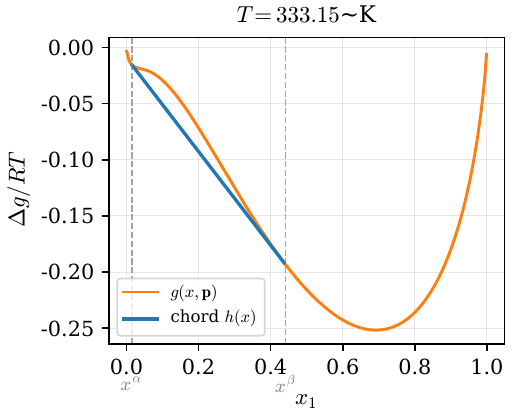}
\end{minipage}
\hfill
\begin{minipage}[t]{0.48\textwidth}
  \centering
  \includegraphics[width=\linewidth]{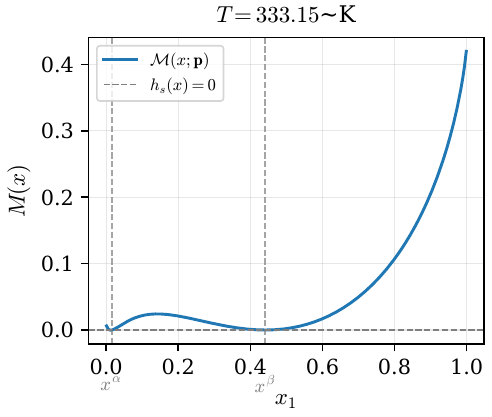}
\end{minipage}
\caption{Case 2 (n-butanol -- water) at $T = 333.15$~K. Left: Gibbs free energy surface $g(x, \mathbf{p}_{\mathrm{fit}})$; right: dual manifold $\mathcal{M}(x; \mathbf{p}_{\mathrm{fit}}) = g - h$.}
\label{fig:appendix-lle-butanol-water-T333_15K}
\end{figure}

\begin{figure}[ht]
\centering
\begin{minipage}[t]{0.48\textwidth}
  \centering
  \includegraphics[width=\linewidth]{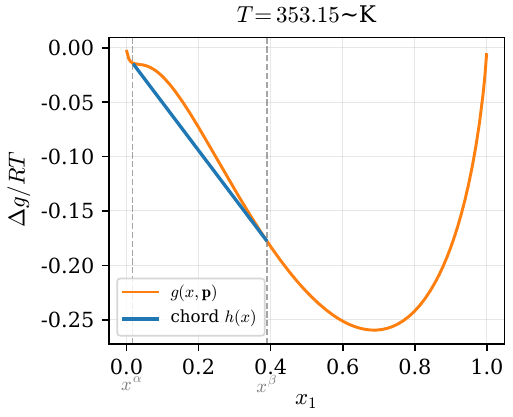}
\end{minipage}
\hfill
\begin{minipage}[t]{0.48\textwidth}
  \centering
  \includegraphics[width=\linewidth]{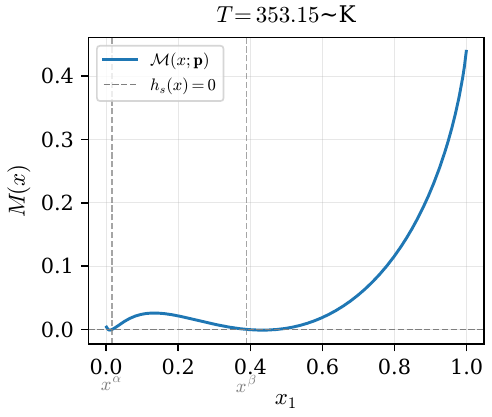}
\end{minipage}
\caption{Case 2 (n-butanol -- water) at $T = 353.15$~K. Left: Gibbs free energy surface $g(x, \mathbf{p}_{\mathrm{fit}})$; right: dual manifold $\mathcal{M}(x; \mathbf{p}_{\mathrm{fit}}) = g - h$.}
\label{fig:appendix-lle-butanol-water-T353_15K}
\end{figure}

\begin{figure}[ht]
\centering
\begin{minipage}[t]{0.48\textwidth}
  \centering
  \includegraphics[width=\linewidth]{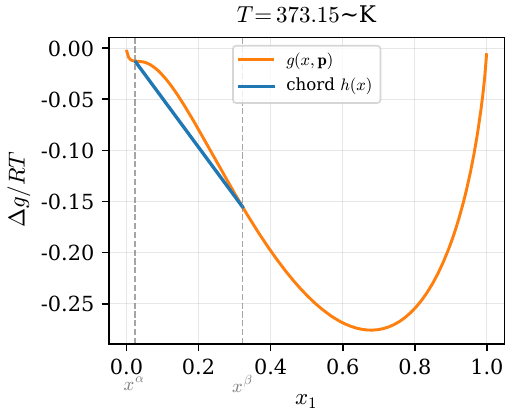}
\end{minipage}
\hfill
\begin{minipage}[t]{0.48\textwidth}
  \centering
  \includegraphics[width=\linewidth]{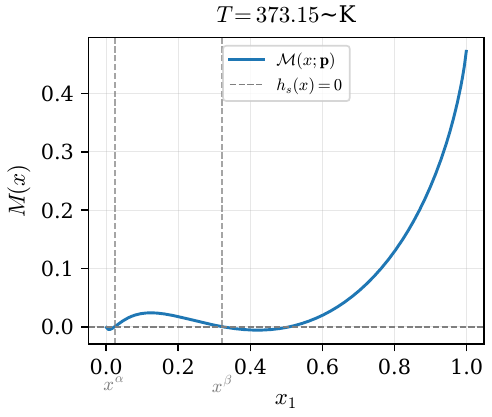}
\end{minipage}
\caption{Case 2 (n-butanol -- water) at $T = 373.15$~K. Left: Gibbs free energy surface $g(x, \mathbf{p}_{\mathrm{fit}})$; right: dual manifold $\mathcal{M}(x; \mathbf{p}_{\mathrm{fit}}) = g - h$.}
\label{fig:appendix-lle-butanol-water-T373_15K}
\end{figure}

\begin{figure}[ht]
\centering
\begin{minipage}[t]{0.48\textwidth}
  \centering
  \includegraphics[width=\linewidth]{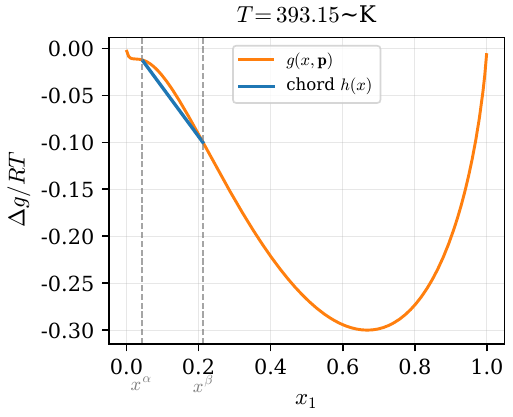}
\end{minipage}
\hfill
\begin{minipage}[t]{0.48\textwidth}
  \centering
  \includegraphics[width=\linewidth]{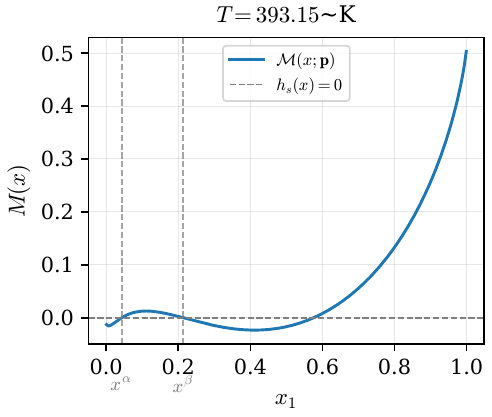}
\end{minipage}
\caption{Case 2 (n-butanol -- water) at $T = 393.15$~K. Left: Gibbs free energy surface $g(x, \mathbf{p}_{\mathrm{fit}})$; right: dual manifold $\mathcal{M}(x; \mathbf{p}_{\mathrm{fit}}) = g - h$.}
\label{fig:appendix-lle-butanol-water-T393_15K}
\end{figure}

\clearpage

\subsection{Case 3: n-octanol -- water}

This case was fitted with NRTL non-randomness parameter $\alpha = 0.2$ at
four experimental temperatures (293.15, 298.15, 313.15, 333.15~K).

\begin{figure}[ht]
\centering
\begin{minipage}[t]{0.48\textwidth}
  \centering
  \includegraphics[width=\linewidth]{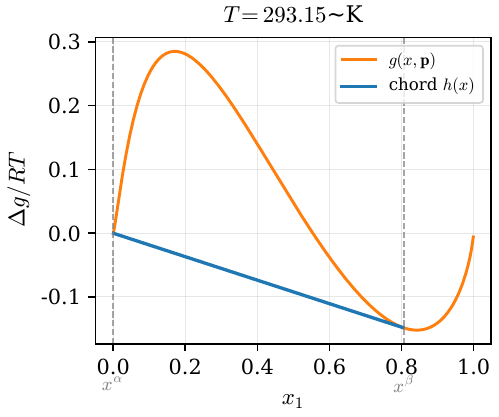}
\end{minipage}
\hfill
\begin{minipage}[t]{0.48\textwidth}
  \centering
  \includegraphics[width=\linewidth]{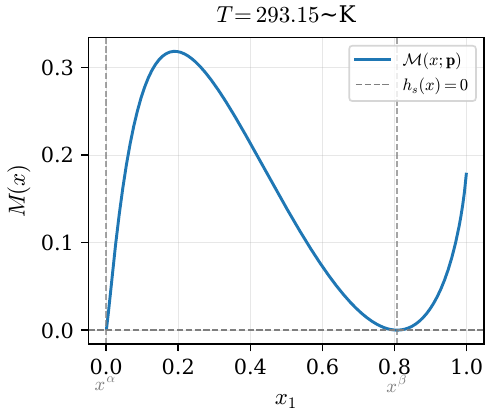}
\end{minipage}
\caption{Case 3 (n-octanol -- water) at $T = 293.15$~K. Left: Gibbs free energy surface $g(x, \mathbf{p}_{\mathrm{fit}})$; right: dual manifold $\mathcal{M}(x; \mathbf{p}_{\mathrm{fit}}) = g - h$.}
\label{fig:appendix-lle-octanol-water-T293_15K}
\end{figure}

\begin{figure}[ht]
\centering
\begin{minipage}[t]{0.48\textwidth}
  \centering
  \includegraphics[width=\linewidth]{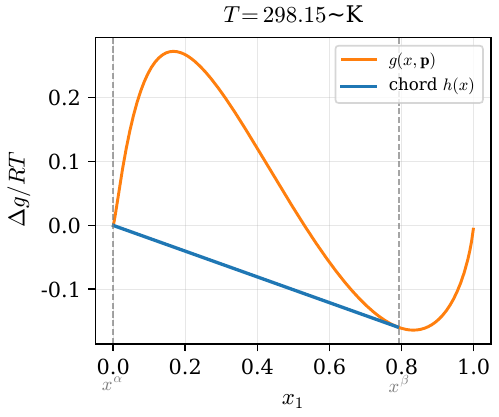}
\end{minipage}
\hfill
\begin{minipage}[t]{0.48\textwidth}
  \centering
  \includegraphics[width=\linewidth]{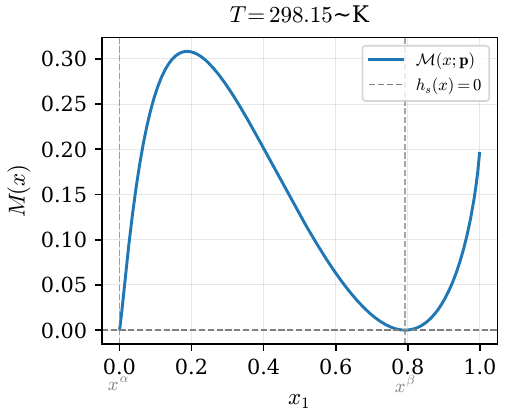}
\end{minipage}
\caption{Case 3 (n-octanol -- water) at $T = 298.15$~K. Left: Gibbs free energy surface $g(x, \mathbf{p}_{\mathrm{fit}})$; right: dual manifold $\mathcal{M}(x; \mathbf{p}_{\mathrm{fit}}) = g - h$.}
\label{fig:appendix-lle-octanol-water-T298_15K}
\end{figure}

\begin{figure}[ht]
\centering
\begin{minipage}[t]{0.48\textwidth}
  \centering
  \includegraphics[width=\linewidth]{figures/appendix-lle-octanol-water/appendix-lle-octanol-water_gibbs_T313.15K.pdf}
\end{minipage}
\hfill
\begin{minipage}[t]{0.48\textwidth}
  \centering
  \includegraphics[width=\linewidth]{figures/appendix-lle-octanol-water/appendix-lle-octanol-water_manifold_T313.15K.pdf}
\end{minipage}
\caption{Case 3 (n-octanol -- water) at $T = 313.15$~K. Left: Gibbs free energy surface $g(x, \mathbf{p}_{\mathrm{fit}})$; right: dual manifold $\mathcal{M}(x; \mathbf{p}_{\mathrm{fit}}) = g - h$.}
\label{fig:appendix-lle-octanol-water-T313_15K}
\end{figure}

\begin{figure}[ht]
\centering
\begin{minipage}[t]{0.48\textwidth}
  \centering
  \includegraphics[width=\linewidth]{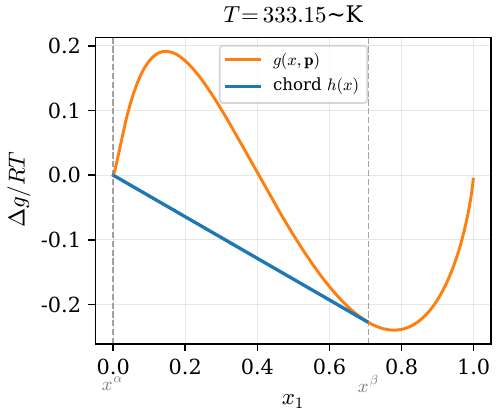}
\end{minipage}
\hfill
\begin{minipage}[t]{0.48\textwidth}
  \centering
  \includegraphics[width=\linewidth]{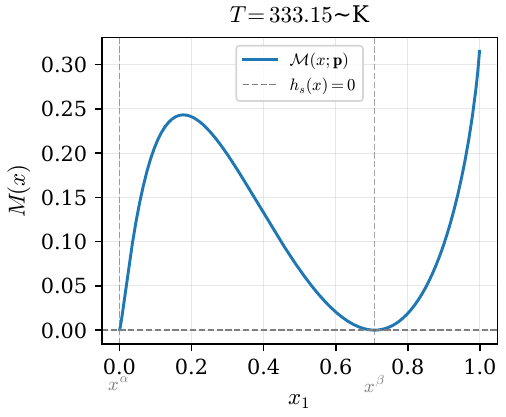}
\end{minipage}
\caption{Case 3 (n-octanol -- water) at $T = 333.15$~K. Left: Gibbs free energy surface $g(x, \mathbf{p}_{\mathrm{fit}})$; right: dual manifold $\mathcal{M}(x; \mathbf{p}_{\mathrm{fit}}) = g - h$.}
\label{fig:appendix-lle-octanol-water-T333_15K}
\end{figure}

\clearpage

\subsection{Case 4: furfural -- 2,2,5-trimethyl-hexane}

This case was fitted with NRTL non-randomness parameter $\alpha = 0.2$ at
ten experimental temperatures (293.15, 298.15, 303.15, 313.15, 323.15,
333.15, 343.15, 353.15, 363.15, 373.15~K).

\begin{figure}[ht]
\centering
\begin{minipage}[t]{0.48\textwidth}
  \centering
  \includegraphics[width=\linewidth]{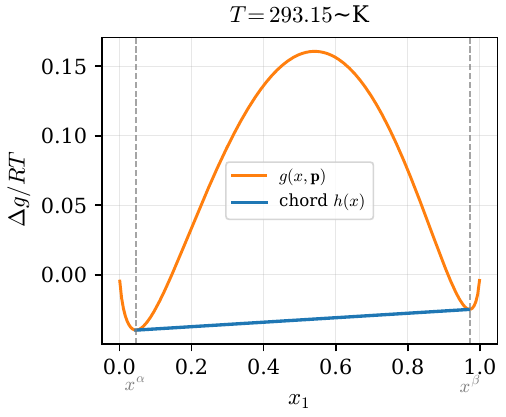}
\end{minipage}
\hfill
\begin{minipage}[t]{0.48\textwidth}
  \centering
  \includegraphics[width=\linewidth]{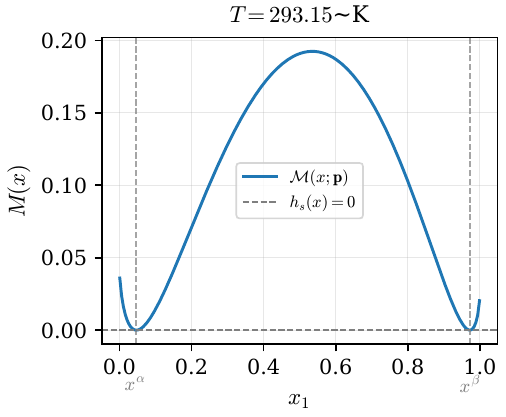}
\end{minipage}
\caption{Case 4 (furfural -- 2,2,5-trimethyl-hexane) at $T = 293.15$~K. Left: Gibbs free energy surface $g(x, \mathbf{p}_{\mathrm{fit}})$; right: dual manifold $\mathcal{M}(x; \mathbf{p}_{\mathrm{fit}}) = g - h$.}
\label{fig:appendix-lle-furfural-tmh-T293_15K}
\end{figure}

\begin{figure}[ht]
\centering
\begin{minipage}[t]{0.48\textwidth}
  \centering
  \includegraphics[width=\linewidth]{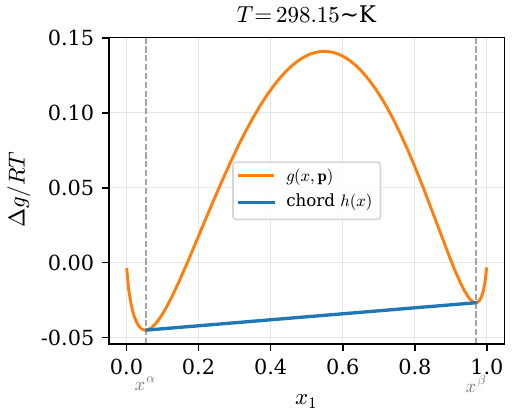}
\end{minipage}
\hfill
\begin{minipage}[t]{0.48\textwidth}
  \centering
  \includegraphics[width=\linewidth]{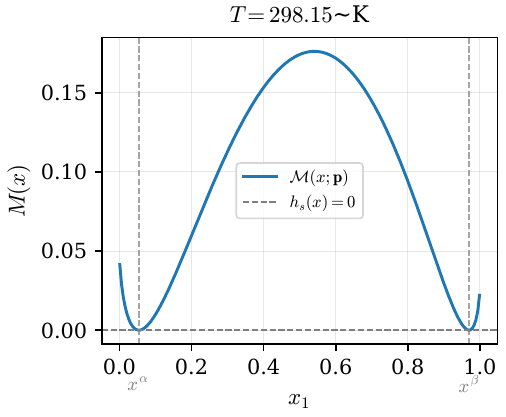}
\end{minipage}
\caption{Case 4 (furfural -- 2,2,5-trimethyl-hexane) at $T = 298.15$~K. Left: Gibbs free energy surface $g(x, \mathbf{p}_{\mathrm{fit}})$; right: dual manifold $\mathcal{M}(x; \mathbf{p}_{\mathrm{fit}}) = g - h$.}
\label{fig:appendix-lle-furfural-tmh-T298_15K}
\end{figure}

\begin{figure}[ht]
\centering
\begin{minipage}[t]{0.48\textwidth}
  \centering
  \includegraphics[width=\linewidth]{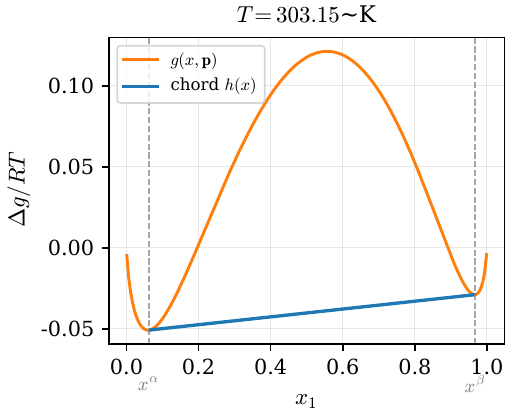}
\end{minipage}
\hfill
\begin{minipage}[t]{0.48\textwidth}
  \centering
  \includegraphics[width=\linewidth]{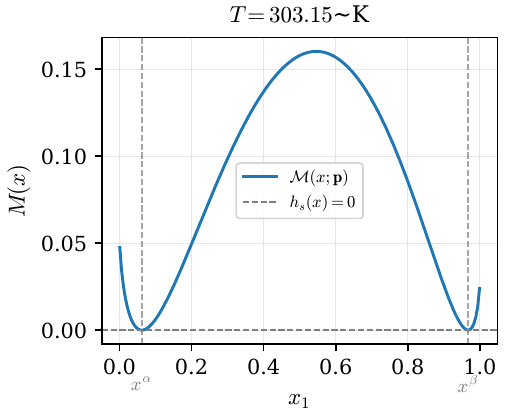}
\end{minipage}
\caption{Case 4 (furfural -- 2,2,5-trimethyl-hexane) at $T = 303.15$~K. Left: Gibbs free energy surface $g(x, \mathbf{p}_{\mathrm{fit}})$; right: dual manifold $\mathcal{M}(x; \mathbf{p}_{\mathrm{fit}}) = g - h$.}
\label{fig:appendix-lle-furfural-tmh-T303_15K}
\end{figure}

\begin{figure}[ht]
\centering
\begin{minipage}[t]{0.48\textwidth}
  \centering
  \includegraphics[width=\linewidth]{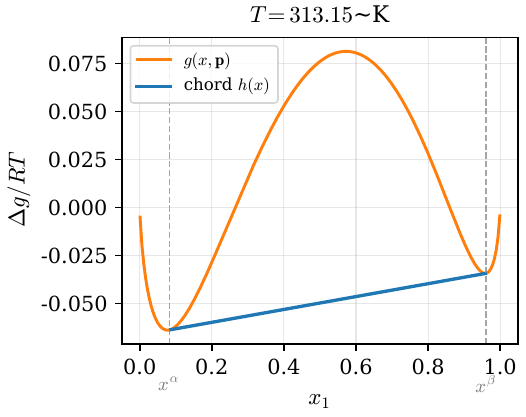}
\end{minipage}
\hfill
\begin{minipage}[t]{0.48\textwidth}
  \centering
  \includegraphics[width=\linewidth]{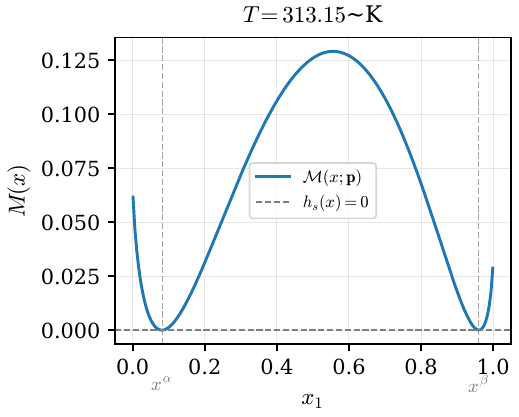}
\end{minipage}
\caption{Case 4 (furfural -- 2,2,5-trimethyl-hexane) at $T = 313.15$~K. Left: Gibbs free energy surface $g(x, \mathbf{p}_{\mathrm{fit}})$; right: dual manifold $\mathcal{M}(x; \mathbf{p}_{\mathrm{fit}}) = g - h$.}
\label{fig:appendix-lle-furfural-tmh-T313_15K}
\end{figure}

\begin{figure}[ht]
\centering
\begin{minipage}[t]{0.48\textwidth}
  \centering
  \includegraphics[width=\linewidth]{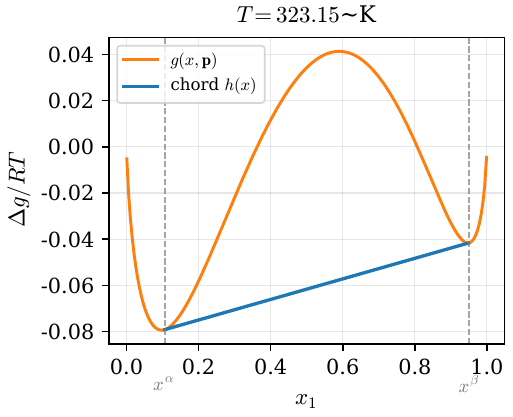}
\end{minipage}
\hfill
\begin{minipage}[t]{0.48\textwidth}
  \centering
  \includegraphics[width=\linewidth]{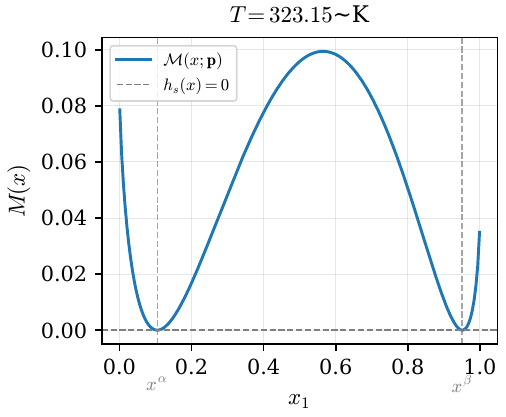}
\end{minipage}
\caption{Case 4 (furfural -- 2,2,5-trimethyl-hexane) at $T = 323.15$~K. Left: Gibbs free energy surface $g(x, \mathbf{p}_{\mathrm{fit}})$; right: dual manifold $\mathcal{M}(x; \mathbf{p}_{\mathrm{fit}}) = g - h$.}
\label{fig:appendix-lle-furfural-tmh-T323_15K}
\end{figure}

\begin{figure}[ht]
\centering
\begin{minipage}[t]{0.48\textwidth}
  \centering
  \includegraphics[width=\linewidth]{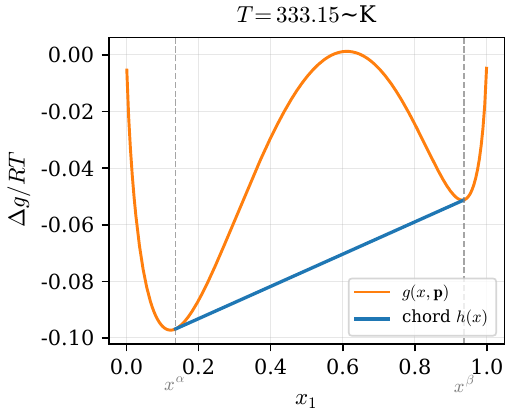}
\end{minipage}
\hfill
\begin{minipage}[t]{0.48\textwidth}
  \centering
  \includegraphics[width=\linewidth]{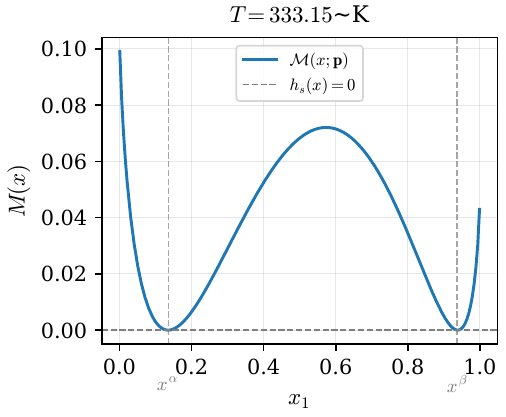}
\end{minipage}
\caption{Case 4 (furfural -- 2,2,5-trimethyl-hexane) at $T = 333.15$~K. Left: Gibbs free energy surface $g(x, \mathbf{p}_{\mathrm{fit}})$; right: dual manifold $\mathcal{M}(x; \mathbf{p}_{\mathrm{fit}}) = g - h$.}
\label{fig:appendix-lle-furfural-tmh-T333_15K}
\end{figure}

\begin{figure}[ht]
\centering
\begin{minipage}[t]{0.48\textwidth}
  \centering
  \includegraphics[width=\linewidth]{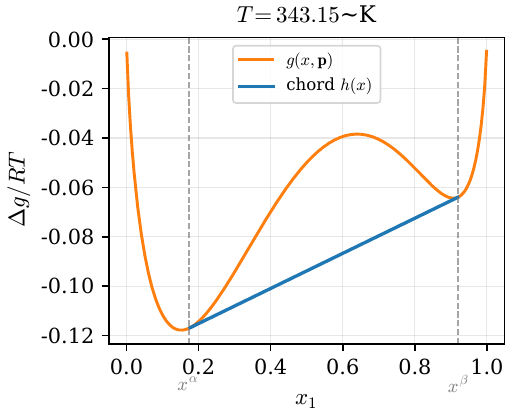}
\end{minipage}
\hfill
\begin{minipage}[t]{0.48\textwidth}
  \centering
  \includegraphics[width=\linewidth]{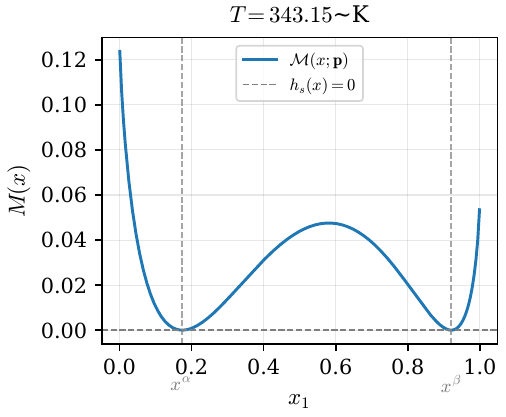}
\end{minipage}
\caption{Case 4 (furfural -- 2,2,5-trimethyl-hexane) at $T = 343.15$~K. Left: Gibbs free energy surface $g(x, \mathbf{p}_{\mathrm{fit}})$; right: dual manifold $\mathcal{M}(x; \mathbf{p}_{\mathrm{fit}}) = g - h$.}
\label{fig:appendix-lle-furfural-tmh-T343_15K}
\end{figure}

\begin{figure}[ht]
\centering
\begin{minipage}[t]{0.48\textwidth}
  \centering
  \includegraphics[width=\linewidth]{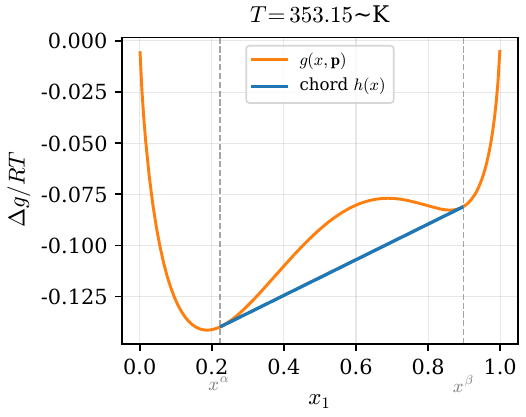}
\end{minipage}
\hfill
\begin{minipage}[t]{0.48\textwidth}
  \centering
  \includegraphics[width=\linewidth]{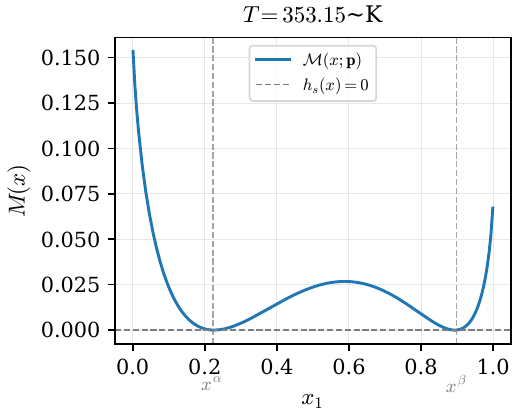}
\end{minipage}
\caption{Case 4 (furfural -- 2,2,5-trimethyl-hexane) at $T = 353.15$~K. Left: Gibbs free energy surface $g(x, \mathbf{p}_{\mathrm{fit}})$; right: dual manifold $\mathcal{M}(x; \mathbf{p}_{\mathrm{fit}}) = g - h$.}
\label{fig:appendix-lle-furfural-tmh-T353_15K}
\end{figure}

\begin{figure}[ht]
\centering
\begin{minipage}[t]{0.48\textwidth}
  \centering
  \includegraphics[width=\linewidth]{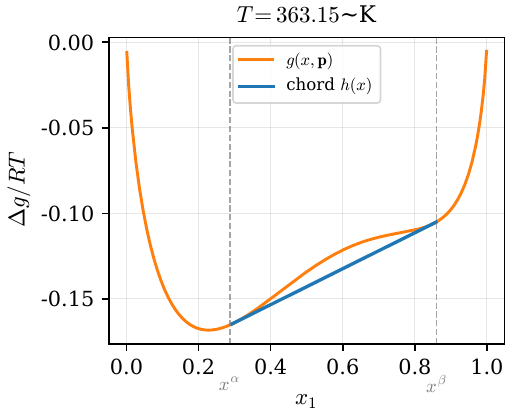}
\end{minipage}
\hfill
\begin{minipage}[t]{0.48\textwidth}
  \centering
  \includegraphics[width=\linewidth]{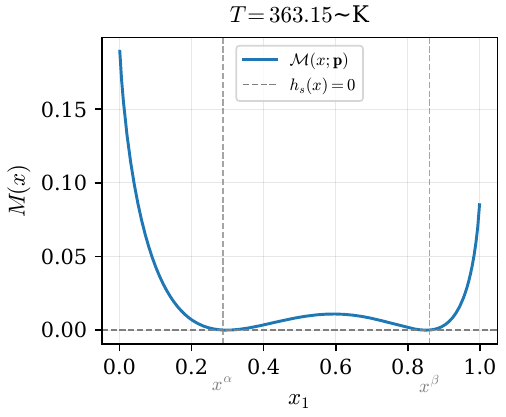}
\end{minipage}
\caption{Case 4 (furfural -- 2,2,5-trimethyl-hexane) at $T = 363.15$~K. Left: Gibbs free energy surface $g(x, \mathbf{p}_{\mathrm{fit}})$; right: dual manifold $\mathcal{M}(x; \mathbf{p}_{\mathrm{fit}}) = g - h$.}
\label{fig:appendix-lle-furfural-tmh-T363_15K}
\end{figure}

\begin{figure}[ht]
\centering
\begin{minipage}[t]{0.48\textwidth}
  \centering
  \includegraphics[width=\linewidth]{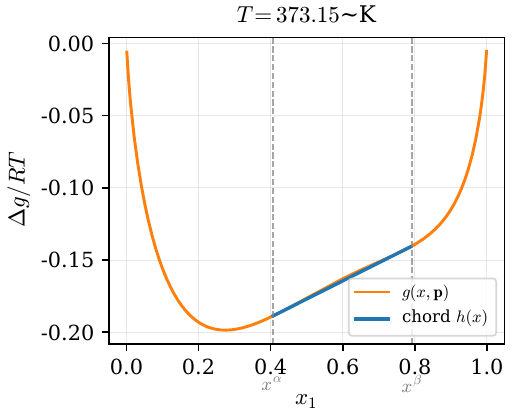}
\end{minipage}
\hfill
\begin{minipage}[t]{0.48\textwidth}
  \centering
  \includegraphics[width=\linewidth]{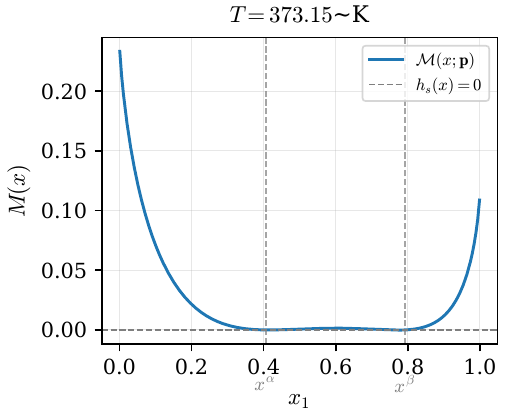}
\end{minipage}
\caption{Case 4 (furfural -- 2,2,5-trimethyl-hexane) at $T = 373.15$~K. Left: Gibbs free energy surface $g(x, \mathbf{p}_{\mathrm{fit}})$; right: dual manifold $\mathcal{M}(x; \mathbf{p}_{\mathrm{fit}}) = g - h$.}
\label{fig:appendix-lle-furfural-tmh-T373_15K}
\end{figure}